\documentclass[11pt]{amsart}
\usepackage{amsthm, amssymb, latexsym, enumerate}
\usepackage[T1]{fontenc}
\usepackage[margin=1.1in]{geometry}
\usepackage{bbm}
\usepackage{color}
\usepackage{hyperref}
\hypersetup{
    bookmarks=true,         
    unicode=false,          
    pdftoolbar=true,       
    pdfmenubar=true,       
    pdffitwindow=false,    
    pdfstartview={FitH},      
    colorlinks=true,       % false: boxed links; true: colored links
   linkcolor=black,          % color of internal links
    citecolor=black,        % color of links to bibliography
    filecolor=black,      % color of file links
    urlcolor=black}           % color of external links

\def\ep{\varepsilon}

\def\wt{\widetilde}

\def\sk{\vskip 10pt}

\def\D{\mathbb D}
\def\C{\mathbb C}
\def\supp{{\text{\rm supp}\,}}

\def\noqed{\renewcommand{\qedsymbol}{}}

\newtheorem{thm}{Theorem}[section]
\newtheorem{lem}[thm]{Lemma}
\newtheorem{prop}[thm]{Proposition}
\newtheorem{cor}[thm]{Corollary}
\newtheorem{rem}[thm]{Remark}
\title{Sharp $C^{1,\bar1}$ estimates in K\"ahler quantization and non-pluripolar Radon measures}
\author{Zbigniew B\l ocki and Tam\'as Darvas}

\begin{document}

\begin{abstract}
Let $K_\varphi$ denote the weighted Bergman kernel associated to a plurisubharmonic function $\varphi$. We obtain upper bounds and positive lower bounds for the Bergman metric $i\partial \bar{\partial} \log K_\varphi$, expressed solely in terms of upper bounds and positive lower bounds of $i\partial \bar{\partial}\varphi$. Our approach applies in both local and compact K\"ahler settings. As an immediate application we obtain the optimal $C^{1,\alpha}$-convergence for the quantization of K\"ahler currents with bounded coefficients. We also show that any non-pluripolar Radon measure on a compact Kähler manifold admits a quantization.
\end{abstract}
\maketitle

\section{Main results}
\subsection{The $C^{1,\bar1}$ estimates for K\"ahler quantization.}

Let $X$ be a K\"ahler manifold and $(L,h) \to X$ an ample Hermitian line bundle with positive curvature $i\Theta(h) = -i\partial \bar \partial \log h=\omega>0$. We fix $(X,\omega)$ as our background K\"ahler structure.

In classical K\"ahler quantization one considers different Hermitian metrics $h'$ on $L$. Such a metric can be written as $h' = h e^{-\varphi}$, where the potential $\varphi \in \mathcal H_\omega$ belongs to the space of smooth K\"ahler potentials:
\[
\mathcal H_\omega := \{\varphi\in C^\infty(X) : \omega_\varphi := \omega + i \partial \bar \partial \varphi >0\}.
\]

Given $k \geq 1$, by  $\mathcal H^k$ we denote the space of Hermitian inner products on the vector space $H^0(X,L^k)$. The metric $h e^{-\varphi}$ induces a natural $L^2$ inner product via the \emph{Hilbert} map $H^k_{(\cdot)}:\mathcal H_\omega \to \mathcal H^k$:
\begin{equation}\label{eq: Hilb_map}
H^k_\varphi( s,s') : = \int_X h^k(s,s') e^{-k\varphi}
\omega^n, \ \ \ s,s' \in H^0(X,L^k).
\end{equation}
Let $\{e_j\}_{j =1,\ldots,N_k}$ be an orthonormal basis for $H^0(X,L^k)$ with respect to this Hilbert norm, where $N_k = \dim H^0(X,L^k)$. Accordingly, the $k$-th \emph{K\"ahler Bergman kernel/Bergman potential/Bergman measure} of $\varphi$ are defined as follows:
\begin{equation}\label{eq: K_k_def}
K^k_\varphi = \sum_{j =1}^{N_k} h^k(e_j,e_j) = \sup_{s \in H^0(X,L^k), \,H^k_\varphi(s,s) \leq 1} h^k(s,s),
\end{equation}
\begin{equation}\label{eq: P_k_def}
P^k_\varphi := \frac{1}{k}\log K^k_\varphi = \frac{1}{k} \log \bigg( \sum_{j =1}^{N_k} h^k(e_j,e_j) \bigg) = \sup_{s \in H^0(X,L^k), \, H^k_\varphi(s,s) \leq 1} \frac{1}{k} \log h^k(s,s).
\end{equation}

\begin{equation}\label{eq: M_k_def}
M^k_\varphi := \frac{(2\pi)^n}{k^n}   K^k_\varphi e^{-k\varphi}\omega^n.
\end{equation}

The important fact is that $P^k_\varphi \in \mathcal H_\omega$, and the $k$-th \emph{Bergman metric} $\omega_{P^k_\varphi}$ is the pullback of the Fubini--Study metric under an appropriate projective embedding of $X$ \cite[Chapter 7.1]{SzeBook}. Thus, $\omega_{P^k_\varphi}$ can be viewed as an \emph{algebraic K\"ahler metric}. 

A central theme in K\"ahler geometry, going back to a problem of Yau \cite{Yau}, has been to approximate the K\"ahler metric $\omega_\varphi$ using $\omega_{P^k_\varphi}$ as $k \to \infty$.
Due to early work of Tian \cite{Tian}, with later improvements by Bouche \cite{Bouche}, Ruan \cite{Ruan}, Catlin \cite{Catlin}, Zelditch \cite{Zelditch}, and Lu \cite{Lu}, we know that 
$$P^k_\varphi \to_{C^\infty} \varphi, \ \ \ M^k_\varphi\to_{C^\infty}  \omega_\varphi^n.$$
See \cite[Chapter 7]{SzeBook} for an exposition. \smallskip

But what can be said if $\omega_\varphi$ is a non-smooth/degenerate K\"ahler metric? When dealing with potentials $\varphi \in C^{m,\alpha}(X)$ coming from H\"older spaces, one can still define $P^k_\varphi$ in the same way, and it is natural to conjecture that $P^k_\varphi \to \varphi$ in $C^{m',\alpha'}$ for any $m' + \alpha' < m + \alpha$.

Since much of the K\"ahler quantization theory still relies on asymptotic expansions of smooth potentials, few results are available for H\"older-continuous potentials $\varphi$ (see \cite{BBS08,MaMarinescu} and references therein). In Tian's original result, he obtains control of $\omega_{P^k_\varphi}$ using the first five derivatives of $\varphi$ \cite{Tian}, and, to the best of our knowledge, subsequent studies have not substantially refined this aspect. Indeed, according to the discussion after \cite[Corollary 1.2]{DMN}, using presently availably tools, one needs that $\varphi$ is at least six times (!) differentiable to control the Bergman measure and potential.

The aim of our first main result is to specifically address this point. We show that upper bounds and positive lower bounds on $\omega_{P^k_\varphi}$ can be obtained only using upper bonds and positive lower bounds on $\omega_\varphi$. To state this sharp result, we consider the following space of degenerate K\"ahler potentials:
\[
\mathcal H_\omega^{1,\bar 1}:= \{\varphi \in C^{1,\bar 1}(X) \ \textup{ s.t. } \   \omega_\varphi := \omega + i\partial \bar \partial \varphi \geq 0 \ \textup{ as currents}\}.
\]
(By $C^{1,\bar 1}$ we mean functions whose Laplacians, or equivalently mixed complex derivatives, are globally bounded.)

\begin{thm}\label{thm: C^11_quant} Suppose that $\varphi \in \mathcal H^{1,\bar 1}_\omega$ satisfies 
$0 < a \omega \leq \omega_\varphi \leq A \omega.$ 
Then there exists a positive constant $C=C(X,\omega)$ such that
\begin{equation}\label{eq: Laplacian_est}
\frac{a^{n+1}}{C(1 + A^n)} \omega\leq \omega_{P^k_\varphi} \leq \frac{C(1 + A^{n+1})}{a^n} \omega \quad \textup{ for all } k \geq \frac{C}{a}.
\end{equation}
In particular, $P^k_\varphi \to_{C^{1,\alpha}} \varphi$ for any $\alpha \in (0,1)$.
\end{thm}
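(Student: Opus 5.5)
We reduce the statement to a purely local estimate on balls for weighted Bergman kernels, and then globalize using the ampleness of $L$ and the fixed geometry of $(X,\omega)$.

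\textbf{Local reduction.} The metric $\omega_{P^k_\varphi}$ equals $\omega + \frac1k i\partial\bar\partial \log K^k_\varphi$ written in a local frame. Fix a point $x_0\in X$ and holomorphic normal coordinates $z$ on a ball $B(x_0,r_0)$, with $r_0=r_0(X,\omega)$. Write $h=e^{-\psi}$ with $i\partial\bar\partial\psi=\omega$, and set $u:=k(\psi+\varphi)$. Then $u$ is a plurisubharmonic weight with
\[
ka\,\omega\le i\partial\bar\partial u\le kA\,\omega .
\]
Rescaling $z=w/\sqrt k$ turns this into a weight on a ball of radius $\sqrt k\, r_0$ with $a\,\beta\le i\partial\bar\partial u\le A\,\beta$ up to $O(1/\sqrt k)$ errors, where $\beta$ is the Euclidean form. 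The bound $k\ge C/a$ is used exactly here, to absorb the error in the lower bound $a$.

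\textbf{Comparison of kernels.} The key step is to compare $i\partial\bar\partial\log K^k_\varphi$ at $x_0$ with $i\partial\bar\partial\log K_u$, the weighted Bergman kernel of the unit ball (or a ball of fixed radius in $w$) with weight $e^{-u}$. Via the extremal characterization, $i\partial\bar\partial \log K(x_0)(\xi,\bar\xi)$ equals $K(x_0)^{-1}$ times the supremum of $|\partial_\xi f(x_0)|^2$ over $f$ with $f(x_0)=0$ and $\|f\|\le1$. This is the standard Kobayashi formula, which expresses the metric as a ratio of two extremal problems.

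\textbf{Upper bounds for the extremal problems.} Upper bounds on $|f(x_0)|^2$ and $|\partial f(x_0)|^2$ follow from the submean value inequality for $|f|^2e^{-u}$. We first subtract from $u$ its pluriharmonic part at $x_0$, i.e.\ use $u-2\mathrm{Re}(\text{holomorphic Taylor part})$, so that $0\le u\le A|w|^2$ on the unit ball. This gives
\[
K\le C A^n, \qquad \text{numerator}\le C A^{n+1}.
\]

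\textbf{Lower bounds via H\"ormander.} The lower bounds are the main obstacle. We need lower bounds on $K(x_0)$ and on the derivative extremal problem, and these require constructing global sections with controlled $L^2$ norm. We would take the cutoff $\chi(w)$ times a local holomorphic function ($1$ or $w_j$) and correct it by H\"ormander's $L^2$ estimate for $\bar\partial$ on $L^k$. The curvature of the weight is at least $ka\,\omega$, so the $\bar\partial$-correction costs a factor $1/a$. We add a weight $n\log|w|^2$ (respectively $(n+1)\log|w|^2$) to force the vanishing order at $x_0$, with a curvature loss of order $O(1/k)$ that is absorbed since $k\ge C/a$. Since $u\le A|w|^2$ near the origin, this yields
\[
K\ge c\,a^n, \qquad \text{numerator}\ge c\,a^{n+1}
\]
in suitable normalization. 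Global sections exist on compact $X$ for $k\ge C/a$ because the positivity comes from $k\omega_\varphi\ge ka\,\omega$ and the Ricci term is dominated.

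\textbf{Assembling the bounds.} We combine the bounds as ratios:
\[
\frac{\text{numerator}_{\min}}{K_{\max}}\ \text{gives}\ \frac{a^{n+1}}{1+A^n},
\qquad
\frac{\text{numerator}_{\max}}{K_{\min}}\ \text{gives}\ \frac{1+A^{n+1}}{a^n}.
\]
The constant $1$ accounts for the background term. Converting back to $\omega$ on $X$ gives \eqref{eq: Laplacian_est}.

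\textbf{$C^{1,\alpha}$ convergence.} The two-sided bound gives a uniform bound on $\Delta_\omega P^k_\varphi$. Standard estimates give $|P^k_\varphi-\varphi|\le C\log k/k$, which follows from the same bounds on $K$. Uniform $C^{1,\bar1}$ bounds imply uniform $W^{2,p}$ bounds, hence uniform $C^{1,\beta}$ bounds for all $\beta<1$. Interpolation with the $C^0$ convergence then yields $P^k_\varphi\to\varphi$ in $C^{1,\alpha}$.
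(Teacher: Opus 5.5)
Your outline has the paper's skeleton: the ratio formula $\omega_{P^k_\varphi}(v,v)=\wt K^k_\varphi/(kK^k_\varphi)$, mean-value upper bounds, and H\"ormander on $L^k$ with $k\ge C/a$ absorbing the $\log\det\omega$ and cut-off $\log|z|$ terms. However, both analytic steps rest on the normalization ``subtract the holomorphic Taylor part so that $0\le u\le A|w|^2$'', and that normalization is not available. A $C^{1,\bar1}$ weight has no second-order Taylor expansion, and in general no pluriharmonic $h$ squeezes $u-h$ between $0$ and $C|w|^2$. For example, in one variable $u=(1+M)|w|^2+M\,\mathrm{Re}(w^2)\log(1/|w|)$ satisfies $1\le u_{w\bar w}\le 1+2M$. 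Yet for any harmonic $h=\sum_{m\ge1}\mathrm{Re}(c_mw^m)$, on $|w|=r$ the function $u-h$ has mean $(1+M)r^2$ and $\cos2\theta$-coefficient $r^2(M\log\frac1r-\mathrm{Re}\,c_2)$, which is incompatible with $0\le u-h\le C|w|^2$ as $r\to0$. For the upper bounds this is repairable, and the paper never normalizes $u$ pointwise. It uses Jensen's inequality on spheres, subharmonicity of $\log|f|^2$, and only the averaged bound $\frac1{|\partial B_\rho|}\int_{\partial B_\rho}u\,d\sigma\le u(z)+A\rho^2$, which comes from superharmonicity of $u-A|\cdot-z|^2$. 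For $\wt K$ it adds Lemma~\ref{lem: log_est}: when $f(0)=0$, $\int_{\partial B_\rho}\log|f|^2d\sigma_\rho-\log\rho^2$ is bounded below by $\log(C_n|\partial f(0)\cdot v|^2)$.

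The lower bounds are where the argument genuinely breaks. Correcting $\chi e^{p}$ (or $\chi w_je^{p}$) by H\"ormander requires $\int_{\supp\chi}e^{-(u-2\mathrm{Re}\,p)}\,d\lambda\lesssim(ka)^{-n}e^{-(u-2\mathrm{Re}\,p)(x_0)}$, with $\supp\chi$ of radius at least $R_0(ka)^{-1/2}$ so that the cut-off $n\log|w|^2$ weight can be absorbed. This is a \emph{lower} bound on $u$ minus a pluriharmonic function (your ``since $u\le A|w|^2$'' points the wrong way), and it cannot hold with constants depending only on $a$. Take the example above in coordinates rescaled by $\sqrt{ka}$, so $a=1$ and $A=1+2M$. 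Jensen's inequality against the probability densities $\frac1{2\pi}(1\pm\cos2\theta)$ on circles gives $\int_{B_R}e^{-(u-2\mathrm{Re}\,p)}d\lambda\ge c\,e^{c'R^2M}$ for every holomorphic $p$ with $(u-2\mathrm{Re}\,p)(0)=0$. So your construction can certify at best $K\gtrsim e^{-cA/a}a^n$, which destroys the polynomial bounds $\frac{a^{n+1}}{C(1+A^n)}$ and $\frac{C(1+A^{n+1})}{a^n}$.

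The missing ingredient is an $L^2$ extension theorem whose constant depends only on the lower bound of $i\partial\bar\partial u$. The paper proves the optimal Ohsawa--Takegoshi theorem with prescribed transversal derivatives (Theorem~\ref{ot}) and iterates it over the coordinates (Corollary~\ref{cor: opt_polydisk}). Assuming only that $u-a|z|^2$ is psh, this gives $K_{\D^n,u}(0)\ge\frac{a^n}{\pi^n}e^{u(0)}$ and $\wt K_{\D^n,u}(0;v)\ge\frac{a^{n+1}}{\pi^n}e^{u(0)}$. H\"ormander enters only afterwards, in Lemma~\ref{lem: Berg_kern_compare_compact}. There it is applied to this extension, whose weighted $L^2$ norm on the polydisk is already controlled, rather than to a model function.
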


With \eqref{eq: Laplacian_est} in hand,  $P^k_\varphi \to_{C^{1,\alpha}} \varphi$ follows from the Sobolev inequality \cite{GT} and the $C^0$-convergence theorem of Demailly \cite{D4}, which states that if $\varphi \in C^0(X)$ and $\omega_\varphi \geq a \omega>0$ then $P^k_\varphi \to \varphi$ uniformly, as a consequence of the Ohsawa--Takegoshi extension theorem \cite{OT} (alternatively, one could also use \cite[Theorem 4.1]{DLR18}).

It is an interesting feature of our arguments that we always work with the Bergman
kernels on the diagonal. In the already mentioned works dealing with the smooth case they
always relied on asymptotics of the Bergman kernel off the diagonal (see also \cite{Fin26}, connecting off-diagonal expansion to Toeplitz operator composition and asymptotics). It would be interesting
to obtain similar estimates as in Theorem \ref{thm: C^11_quant} for higher-order derivatives,
also staying on the diagonal in the process.  

\subsection{The $C^{1,\bar 1}$ estimates for the local Bergman potential.}
The results in the compact K\"ahler case will be obtained by first proving corresponding theorems in the local case, which are of independent interest.

For a domain $\Omega \subset \mathbb C^n$ and plurisubharmonic (psh) function $u:\Omega \to [-\infty, \infty)$ we consider the (local) \emph{Bergman kernel} 
$$K_{\Omega,u}(z) =\sup\{|f(z)|^2\colon f\in\mathcal O(\Omega),\ \int_\Omega|f|^2e^{-u}d\lambda\leq 1\}.$$ 
This function is always smooth and $\log K_{\Omega,u}$ is plursubharmonic, naturally inducing a positive (1,1) current $i \partial \bar \partial \log K_{\Omega,u}$. When $\Omega$ and $u$ are bounded, this current becomes a smooth $(1,1)$ form. It is called the \emph{Bergman metric} with
weight $u$ having the following interpretation:
\begin{equation}\label{eq: Berg_metr_formula}
B^2_{\Omega,u}(z;v):=\left.\frac{\partial^2} {\partial\zeta\partial\bar\zeta}\right|_{\zeta=0} \log K_{\Omega,u}(z+\zeta v) =\frac{\wt K_{\Omega,u}(z;v)}{K_{\Omega,u}(z)},
\end{equation}
where $z\in\Omega$, $v\in\mathbb C^n$ and 
$$\wt K_{\Omega,u}(z;v):=\sup\{|f'(z)\cdot v|^2\colon f\in\mathcal O(\Omega),\ f(0)=0,\ \int_\Omega|f|^2e^{-u}d\lambda\leq 1\}.$$ 
Here we used the notation
$f'(z)\cdot v= \partial f (z)v=\left.\frac\partial {\partial\zeta}\right|_{\zeta=0}f(z+\zeta v).$ 
The extremal formula \eqref{eq: Berg_metr_formula} is well known, it is due to Bergman in the unweighted case and
his proof extends to the general case (see e.g. \cite[Theorem 2.7]{Bl}). \sk

    The following theorem provides various bounds on the kernels $K_{\Omega,u}$, $\wt K_{\Omega,u}$ and the meric $B^2_{\Omega,u}$.

\begin{thm}\label{mt}
Assume that $\Omega$ is a bounded pseudoconvex domain in $\mathbb C^n$, $B_r:=B(z,r)\Subset\Omega$ for some $z\in\Omega$ and $r>0$, $v\in\mathbb C^n$, and that $u$ is psh in $\Omega$. Then there exists $C=C(n,r,\operatorname{diam}(\Omega))$ such that
\begin{enumerate}[(i)]

\item\label{ub} If $u$ is $C^{1,\bar 1}(\overline{B_r})$ then 
\begin{equation}\label{ub2}
K_{\Omega,u}(z)\leq
Ce^{u(z)}\max(1,\sup_{B_r}\Delta u)^n
\  \textup{ and } \ 
\wt K_{\Omega,u}(z;v)\leq
Ce^{u(z)}\max(1,\sup_{B_r}\Delta u)^{n+1}|v|^2.
\end{equation}

\item\label{lb} \label{lb2} If $(\partial_j \partial_{\bar k}u)\geq a(\delta_{jk})$
in $B_r$ for some $a>0$, then 
\begin{equation}\label{lb1}
K_{\Omega,u}(z)\geq\frac 1{C}e^{u(z)}a^n \ \textup{ and } \ 
\wt K_{\Omega,u}(z;v)\geq\frac 1{C}e^{u(z)}a^{n+1}|v|^2.
\end{equation}
\item If all the above conditions hold, then
$$ \frac{a^{n+1}}{C\max(1,\sup_{B_r}\Delta u)^{n}} |v|^2 \leq B^2_{\Omega,u}(z;v) \leq  \frac{C\max(1,\sup_{B_r}\Delta u)^{n+1}}{a^n} |v|^2.$$
\end{enumerate}
\end{thm}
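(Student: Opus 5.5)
Part (iii) follows at once from (i) and (ii) via the extremal formula \eqref{eq: Berg_metr_formula}: $B^2_{\Omega,u}=\wt K_{\Omega,u}/K_{\Omega,u}$. Combining the upper bound on $\wt K$ with the lower bound on $K$ gives the upper estimate, and combining the lower bound on $\wt K$ with the upper bound on $K$ gives the lower estimate. The factors $e^{u(z)}$ cancel. So the work lies in (i) and (ii).

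\emph{Upper bounds (i).} The plan is a sub-mean-value argument after subtracting from $u$ a pluriharmonic function that nearly matches it near $z$. Put $M:=\max(1,\sup_{B_r}\Delta u)$ and $\rho:=r/\sqrt M\le r$. On $B_\rho=B(z,\rho)$, let $h$ be the harmonic function with $h=u$ on $\partial B_\rho$. Then $u-h\le 0$ by subharmonicity, and by the Green representation $h-u\le C\rho^2 \sup\Delta u\le C$ on $B_\rho$. Harmonicity alone is not enough, since we need a holomorphic function $g$ with $\operatorname{Re} g$ close to $u$. Use the Taylor polynomial instead. Since $u\in C^{1,\bar1}$, the functions $u$ and $\partial u$ are $C^{1,\alpha}$-controlled, and one can write $u(\zeta)=\operatorname{Re}\big(2\sum u_j(z)(\zeta-z)_j\big)+u(z)+E(\zeta)$ with $|E|\le C M|\zeta-z|^2$ after also absorbing the pure second derivatives. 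These are controlled only in $L^p$/BMO, so I would rather pick $h$ pluriharmonic of the form $\operatorname{Re}$ of a quadratic holomorphic polynomial and estimate $|u-h|\le C$ on $B_\rho$ by elliptic $W^{2,p}$ estimates applied to the rescaled function $u(z+\rho w)$, whose Laplacian is bounded by $r^2$.

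This is the step I expect to be the main obstacle: getting a uniform bound $|u-\operatorname{Re} g|\le C$ on $B_\rho$ from a bound on $\Delta u$ only. First I would try the harmonic replacement $h$ together with the fact that, on a ball, a harmonic function is the real part of a holomorphic function only when $n=1$. For $n>1$ I would fall back on the rescaled $C^{1,\alpha}$ estimate plus the linear Taylor term, which gives $|u-u(z)-\operatorname{Re}\ell|\le C$ on $B_\rho$ with $\ell$ linear holomorphic. Granting this, set $G=e^{g/2}$ with $g=u(z)+\ell$. For $f$ in the unit ball we have $\int_{B_\rho}|f/G|^2\le C\int|f|^2e^{-u}\le C$. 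The mean value inequality then gives $|f(z)|^2e^{-u(z)}\le C\rho^{-2n}=CM^n r^{-2n}$. The Cauchy estimate on $B_{\rho/2}$ for the derivative of $f/G$, using $G'(z)/G(z)=\ell'/2=\partial u(z)$, gives $|(f/G)'(z)\cdot v|^2\le C\rho^{-2n-2}|v|^2$. For the derivative of $f$ itself, first subtract $f(z)G/G(z)$; this combination still has norm bounded by $C$, and the extremal problem for $\wt K$ is invariant under such corrections. This yields $M^{n+1}$.

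\emph{Lower bounds (ii).} Use the Ohsawa--Takegoshi or H\"ormander $L^2$ estimates with weight $u$. Since $i\partial\bar\partial u\ge a\,i\partial\bar\partial|\zeta|^2$ on $B_r$, take $\rho=\min(r,a^{-1/2})$ and a cutoff $\chi$ supported in $B_\rho$. Solve $\bar\partial\beta=\bar\partial\chi\cdot e^{\ell}$, where the weight is $u+2n\log|\zeta-z|$ plus a bounded correction $\psi$ that makes the total weight strictly psh on $\Omega$ (using $\operatorname{diam}\Omega$). Here $\ell$ is holomorphic with $e^{\operatorname{Re}\ell}\approx e^{u/2}$ near $z$, obtained as in (i) but now using only the lower bound. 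Alternatively, apply Ohsawa--Takegoshi extension from the point $z$ directly. Both routes give $f$ with $f(z)$ prescribed and $\int|f|^2e^{-u}\le C a^{-n}e^{u(z)}$ relative to the point value, hence $K\ge e^{u(z)}a^n/C$. For $\wt K$, use the weight $u+2(n+1)\log|\zeta-z|$ and the data $(\zeta-z)\cdot\bar v\,\chi$ to get $f(z)=0$ with $f'(z)\cdot v=|v|^2$ and norm $\le Ca^{-n-1}|v|^{2}e^{-u(z)}$.
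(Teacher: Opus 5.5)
Part (iii) is fine, but your argument for (i) breaks exactly at the step you flag. Adding $\operatorname{Re}h$ with $h$ holomorphic does not change $i\partial\bar\partial u$, so a bound on $\Delta u$ (even $0\le i\partial\bar\partial u\le M$) gives no control of the pluriharmonic part of $u$. Interior $C^{1,\alpha}$ or $W^{2,p}$ estimates need $\|u\|_{L^\infty}$ on the right-hand side, so they cannot give $|u-u(z)-\operatorname{Re}\ell|\le C$ on $B_\rho$ with $\ell$ linear, nor with any holomorphic polynomial of bounded degree.

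For $z=0$ take $u=|\zeta|^2+N\operatorname{Re}(\zeta_1^2)$. It is psh with $\Delta u=4n$, $u(0)=0$ and $\partial u(0)=0$, so $\ell=0$ and $\rho$ is independent of $N$, yet $\sup_{B_\rho}|u|\ge N\rho^2$. The term $N\operatorname{Re}(\zeta_1^3)$ does the same to quadratic polynomials. What is true is that $u=\operatorname{Re}G+v$ on $B_{\rho/2}$ with $G$ holomorphic (not a polynomial) and $|v|\le C(n,r)$. But that requires solving $i\partial\bar\partial v=i\partial\bar\partial u$ with $L^\infty$ bounds on a ball, which you do not supply; with it, your mean-value and Cauchy estimates for $fe^{-G/2}$ would go through.

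The paper needs no holomorphic approximant at all. On each sphere $\partial B(z,s)$ it applies Jensen's inequality to $|f|^2e^{-u}=e^{\log|f|^2-u}$ and uses the sub-mean-value property of $\log|f|^2$. It then bounds the spherical mean of $u$ by $u(z)+As^2$, since $u-A|\zeta-z|^2$ is superharmonic when $\Delta u\le 4nA$. Integrating over $s\in(0,r)$ gives $K_{\Omega,u}(z)\le Ce^{u(z)}A^n$. For $\wt K$ the sub-mean-value property is replaced by Lemma \ref{lem: log_est}: if $f(z)=0$ and $|v|=1$, the spherical mean of $\log|f|^2$ over $\partial B(z,s)$ is at least $\log s^2+\log(C_n|\partial f(z)\cdot v|^2)$. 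This produces the extra factor of $A$.

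In (ii) nothing in your sketch produces the factor $a^n$. A holomorphic $\ell$ with $e^{\operatorname{Re}\ell}\approx e^{u/2}$ near $z$ cannot come from the lower bound alone: $u$ is merely psh with $i\partial\bar\partial u\ge a$, so it may have huge Laplacian, or logarithmic poles, arbitrarily close to $z$. The $\bar\partial$-correction with weight $\phi=u+2n\log|\zeta-z|+\psi$, $\psi$ bounded, and cut-off at scale $\rho=a^{-1/2}$ loses precisely what you want to gain. On $\supp\bar\partial\chi$ you have $|\bar\partial\chi|^2_{i\partial\bar\partial\phi}\lesssim 1$ but $|\zeta-z|^{-2n}\approx a^n$, whereas on the rest of $\Omega$ you only know $|\zeta-z|^{-2n}\ge\operatorname{diam}(\Omega)^{-2n}$. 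Hence $\int_\Omega|\gamma|^2e^{-u}\lesssim a^n\int_{B_\rho}|f|^2e^{-u}$, and you end up with only $K_{\Omega,u}(z)\gtrsim e^{u(z)}$. The standard Ohsawa--Takegoshi theorem applied at the point $z$ in $\Omega$ has the same defect: its constant depends on $\operatorname{diam}\Omega$ but not on $a$.

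The paper first localizes to $B_r$ at a fixed scale (Lemma \ref{lem: local_berg_compare}). It then uses the sharp extension Theorem \ref{ot}, whose constant $C_a=\frac{\pi}{a^{m+1}}\int_0^a s^me^{-s}ds\le \pi\, m!\, a^{-m-1}$ genuinely improves when $u-a|z_n|^2$ is psh. Applied one variable at a time on the polydisk ($n$ times with $m=0$ for $K$; once with $m=1$ and $n-1$ times with $m=0$ for $\wt K$), it gives $a^n$ and $a^{n+1}$ (Corollary \ref{cor: opt_polydisk}).

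Your cut-off idea can be rescued, but only after localizing to $B_r$. Extend from $B(z,b^{-1/2})$, $b=a/(2n)$, by the point version of Ohsawa--Takegoshi, whose constant is of order $b^{-n}$ by scaling. Then correct with the weight $u+n\log\frac{b|\zeta-z|^2}{1+b|\zeta-z|^2}$. This weight is $\le u$ everywhere and $\ge u-C_n$ on the annulus, and its negative curvature is absorbed by $i\partial\bar\partial u\ge a$. That is a different argument from the one you wrote.
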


The estimates of (iii) are a simple consequence of \eqref{eq: Berg_metr_formula} and the estimates of (i) and (ii).
The condition $(\partial_j \partial_{\bar k}u)\geq a(\delta_{jk})$ for $a>0$ appears naturally in several complex variables, 
and it simply means that $\varphi$ is strongly psh.

The proof of the first estimate in \eqref{ub2} is essentially already contained in the argument of \cite[Theorem 2.1]{BB} where the authors establish weak convergence of certain Bergman measures - a result that we considerably extend in Theorem \ref{thm: weak_conv_Radon} below. For completeness, we reproduce the argument. 

The bounds in \eqref{lb1} will be obtained using techniques developed by the first author in \cite{B}. More precisely, in Theorem \ref{ot} we prove an optimal Ohsawa–Takegoshi extension theorem \cite{OT} for holomorphic functions with prescribed transversal derivatives, with strongly psh weights. The first estimate of \eqref{lb1} can also be obtained from \cite[Corollary 1.8]{GZ}, \cite[Theorem 4.1]{O}, \cite[Theorem 1.1]{NW}
or \cite[Theorem 1.2]{HTW}
(note however that Theorem \ref{ot} below improves the constant $\pi/\alpha$ in those results to the  optimal value $\pi(1-e^{-\alpha})/\alpha$).

\begin{cor} 
Let $u$ be a $C^{1,\bar 1}$ strongly psh function in
a bounded pseudoconvex domain $\Omega \subset \mathbb C^n$. Then by the above theorem and \eqref{eq: Berg_metr_formula} the Levi forms $i\partial \bar \partial u_k$ are locally uniformly bounded both from above and below
in $\Omega$, where
\[u_k:=\frac 1k\log K_{\Omega,ku}\]
is the Demailly approximation of $u$ \cite{D4}. Thus $u_k \to_{C^{1,\alpha}} u, \ \alpha \in (0,1)$.\qed
\end{cor}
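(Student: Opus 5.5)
The plan is to apply Theorem \ref{mt}(iii) directly to the weights $ku$ and combine the resulting metric bounds with $C^0$-convergence. Scaling is favourable. Since $u_k=\frac1k\log K_{\Omega,ku}$, we have
\[
\partial_j\partial_{\bar l}u_k=\frac1k\,\partial_j\partial_{\bar l}\log K_{\Omega,ku},
\qquad
\frac1k B^2_{\Omega,ku}(z;v)=i\partial\bar\partial u_k(v,\bar v)
\]
by \eqref{eq: Berg_metr_formula}. Fix a compact $E\Subset\Omega$ and choose $r>0$ with $B(z,r)\Subset\Omega'\Subset\Omega$ for all $z\in E$. On $\overline{\Omega'}$ we have, by assumption, $\sup\Delta u\le M$ and $(\partial_j\partial_{\bar l}u)\ge a(\delta_{jl})$ with $a>0$. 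For the weight $ku$ these become $\sup_{B_r}\Delta(ku)\le kM$ and $(\partial_j\partial_{\bar l}(ku))\ge ka\,(\delta_{jl})$.

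For $k\ge 1/M$ (so the $\max$ with $1$ is harmless), Theorem \ref{mt}(iii) gives
\[
\frac{(ka)^{n+1}}{C(kM)^n}|v|^2\le B^2_{\Omega,ku}(z;v)\le \frac{C(kM)^{n+1}}{(ka)^n}|v|^2 .
\]
The powers of $k$ on each side are $k^{1}$, so dividing by $k$ yields
\[
\frac{a^{n+1}}{CM^n}|v|^2\le i\partial\bar\partial u_k(v,\bar v)\le \frac{CM^{n+1}}{a^n}|v|^2 .
\]
This holds uniformly in $z\in E$ and $k$ large, with $C$ depending only on $n$, $r$ and $\operatorname{diam}\Omega$. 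This step is the heart of the argument, and the only thing to check is that the exponents match; they do.

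For convergence, Demailly's theorem \cite{D4} (via Ohsawa--Takegoshi) gives $u_k\to u$ locally uniformly for continuous psh $u$, with an error of order $\frac{\log k}{k}$. The uniform bound on $\Delta u_k$ gives bounded $W^{2,p}_{loc}$ norms for all $p<\infty$ by Calderón--Zygmund, since $\Delta(u_k-u)$ is bounded in $L^\infty$ and $u_k-u\to0$ uniformly. Sobolev embedding then bounds $u_k$ in $C^{1,\beta}_{loc}$ for every $\beta<1$. By Arzelà--Ascoli and interpolation with the uniform convergence, $u_k\to u$ in $C^{1,\alpha}_{loc}$ for each $\alpha\in(0,1)$.

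The main obstacle is this last step: the Laplacian bound alone does not control $C^{1,1}$, so one has to go through $L^p$ elliptic estimates rather than direct differentiation. This is standard \cite{GT}.
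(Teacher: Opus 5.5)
Your proposal is correct and follows the paper's route: you apply Theorem \ref{mt}(iii) to the weight $ku$ (the powers of $k$ cancel exactly as you check), then use Demailly's $C^0$ convergence together with the standard $L^p$/Sobolev estimates from \cite{GT}, which is what the paper invokes without spelling out. One small correction: the $O(\log k/k)$ rate you quote does not hold for merely continuous $u$, but it does hold here because $C^{1,\bar 1}$ functions are locally Lipschitz (it also follows directly from the first estimates in \eqref{ub2} and \eqref{lb1} applied to $ku$), and in any case only locally uniform convergence is needed.
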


Recall that if $u$ is merely psh then $u_k\to u$ pointwise and in $L^1_{loc}$. If
it is in addition continuous then $u_k\to u$ locally uniformly (see \cite{D4}, and also 
\cite{Bl}), and if $u$ is $C^\infty$ strongly psh then $u_k\to u$ in $C^\infty$ (see \cite{En}).  
Since the Levi forms \(i\partial \bar\partial u_k\) converge weakly to \(i\partial \bar\partial u\), this corollary shows the estimates of Theorem~\ref{mt}(iii) must be optimal. Indeed, \(i\partial \bar\partial u_k\) admit uniform upper bounds and positive lower bounds if and only if the same holds for \(i\partial \bar\partial u\). It is natural to ask whether these two optimal bounds can be decoupled. For instance, if \(u\) is strongly psh, does there necessarily exist a uniform positive lower bound for \(i\partial \bar\partial u_k\)? Likewise, does an upper bound on \(i\partial \bar\partial u\) imply corresponding uniform upper bounds for \(i\partial \bar\partial u_k\)? It is natural to expect that this upper bounded will be necessary to for establishing $C^{1,\alpha}$-convergence of the quantization of weak geodesics in the Mabuchi space of K\"ahler metrics; at present, this is only known in the toric setting \cite{SZ10} (c.f. \cite{PS06,  CS12, Br18, RZ10}.

\subsection{K\"ahler quantization of non-pluripolar Radon measures.} In K\"ahler quantization, the most general space of degenerate potentials one may consider is the space of $\omega$-psh potentials:
$$\textup{PSH}_\omega := \{\varphi: X \to [-\infty,\infty) \ \textup{ s.t. } \ \omega_\varphi \geq 0 \ \textup{ as currents}\}$$
Unfortunately, since $\varphi \in \textup{PSH}_\omega$ may have positive Lelong numbers, the inner product $H^k_\varphi(s,s')$ of \eqref{eq: Hilb_map} does not make sense for all $s,s' \in H^0(X,L^k)$.
For \eqref{eq: Hilb_map} to always make sense we need to require that $e^{-k\varphi}$ is (locally) integrable for all $k$. Skoda \cite{Skoda} 
(see also \cite{DemBook12}, p. 35) proved that if the Lelong number of a psh $u$ at $z$ is smaller than 1 then $e^{-2u}$ is integrable near $z$.
This means that our requirement is equivalent to the fact that Lelong numbers of $\varphi$ vanish at every point.   

A natural subclass of $\textup{PSH}_\omega$ on which one can hope to carry out Kähler quantization is the space $\mathcal E_\omega \subset \textup{PSH}_\omega$ of full mass potentials, introduced by Guedj and Zeriahi in \cite{GZ07}. Recall that a potential $\varphi \in \textup{PSH}_\omega$ has full mass if its non-pluripolar complex Monge--Amp\`ere measure satisfies
$$\int_X \omega^n_\varphi = \int_X \omega^n.$$
First, since the potentials $\varphi \in \mathcal E_\omega$ have zero Lelong numbers, all the concepts $K^k_\varphi, P^k_\varphi, M_\varphi^j$ from \eqref{eq: K_k_def},\eqref{eq: P_k_def},\eqref{eq: M_k_def} make sense allowing for quantization in this context. Second, the most important feature of the class $\mathcal E_\omega$ is that this set of potentials is the natural space of solutions for Calabi--Yau problems with non-pluripolar Radon measure on the right hand side. Namely if $\nu$ is a non-pluripolar Radon measure on $X$ with $\int_X \nu = \int_X \omega^n $ then due to \cite{GZ07, Di09} there exists a unique $\varphi_\nu \in \mathcal E$ with $\int_X \varphi_\nu \omega^n =0$ and
$$\omega_{\varphi_\nu}^n = \nu.$$
Consequently, one can introduce the smooth quantum measures on level $k$ accordingly:
$$M_\nu^k := M_{\varphi_\nu}^k$$
It is natural to hope that these quantize the measure $\nu$ in the large $k$-limit. We confirm this in our final main result, which can be viewed as a vast generalization of the classical theorem of Bouche–Catlin–Lu–Tian–Ruan–Zelditch \cite{Bouche,Ruan,Catlin,Zelditch,Lu}:

\begin{thm}\label{thm: weak_conv_Radon}
If $\nu$ is a non-pluripolar Radon measure on $X$ with $\int_X \nu = \int_X \omega^n$, then $M^k_\nu  \rightharpoonup  \nu$ weakly, as $k \to \infty$.
\end{thm}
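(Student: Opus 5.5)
The plan is a two-sided argument. I will show that every weak limit of $M^k_\nu$ is bounded above by $\nu$. Since $M^k_\nu$ and $\nu$ have asymptotically the same total mass, equality will follow.

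\textbf{Mass.} Write $\varphi=\varphi_\nu\in\mathcal E_\omega$. By definition,
$$\int_X M^k_\nu=\frac{(2\pi)^n}{k^n}\int_X K^k_\varphi e^{-k\varphi}\omega^n=\frac{(2\pi)^n}{k^n}N_k .$$
This holds because $\int_X h^k(e_j,e_j)e^{-k\varphi}\omega^n=1$ for each $j$. By Riemann--Roch, $(2\pi)^nN_k/k^n\to\int_X\omega^n=\int_X\nu$, with the normalization of $\omega$ used in the paper. Hence the measures $M^k_\nu$ have bounded mass and are weakly precompact. It suffices to show that any weak limit $\mu$ satisfies $\mu\le\nu$: since $\mu(X)=\nu(X)$, this forces $\mu=\nu$.

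\textbf{Local upper bound.} I work in a coordinate ball $B\Subset U$ on which $L$ is trivialized with $h=e^{-\psi}$, $\psi$ smooth and strictly psh. Set $u=\psi+\varphi$, which is psh on $U$. Each section $s$ with $H^k_\varphi(s,s)\le1$ is then a holomorphic function $f$ with
$$\int_U|f|^2e^{-ku}\,d\lambda\le C .$$
This gives the pointwise bound
$$K^k_\varphi e^{-k\varphi}\le C\,K_{U,ku}\,e^{-ku}.$$
Theorem~\ref{mt}(i) does not apply directly, because $\varphi$ is not $C^{1,\bar1}$. Instead I use the mean-value argument behind its first estimate (from \cite{BB}). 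For $z\in B$, $|f(z)|^2$ is controlled by its average over a small ball $B(z,\delta/\sqrt k)$, with $e^{-ku}$ compared to $e^{-ku(z)}$. For general psh $u$ this comparison fails pointwise.

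The remedy is to approximate from above. Let $\nu=\omega_\varphi^n$ and choose smooth strictly $\omega$-psh $\varphi_j\searrow\varphi$, so that $\omega_{\varphi_j}^n\to\nu$ weakly by continuity of the Monge--Amp\`ere operator along decreasing sequences in $\mathcal E_\omega$. Monotonicity of Bergman kernels in the weight gives
$$K^k_\varphi\le K^k_{\varphi_j}\quad\text{for each }j.$$
This inequality alone is not enough, because of the factor $e^{-k\varphi}$ versus $e^{-k\varphi_j}$.

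A better route is a weighted Jensen argument. For $z$ fixed and $f$ holomorphic, $\log|f|^2$ is subharmonic. Applying Jensen to $|f|^2e^{-ku}$ against the measure $(i\partial\bar\partial u)^n$ localized at scale $1/\sqrt k$ should yield
$$\frac{(2\pi)^n}{k^n}K_{U,ku}\,e^{-ku}\,d\lambda\ \lesssim\ (1+o(1))\,(i\partial\bar\partial u)^n$$
in the weak sense. For the local model $u$ quadratic this is exact. I expect this to be established in general by the Berndtsson-type inequality used in \cite[Theorem 2.1]{BB}: integrate $\frac{(2\pi)^n}{k^n}K_{ku}e^{-ku}$ against a test function and compare with $\int(dd^c u)^n$ through integration by parts of $\log K_{ku}-ku$, using $\frac1k\log K_{ku}\ge u-O(\frac{\log k}{k})$ (from Ohsawa--Takegoshi, \cite{D4}). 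Combined with $u_k\to u$ and the fact that the Monge--Amp\`ere operator is continuous along such convergence in $\mathcal E$ (from $u_k\ge u-\epsilon_k$ and the plurifine locality of non-pluripolar products), this gives $\mu\le\nu$ on $B$.

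\textbf{Main obstacle.} The hardest step is this upper bound $\limsup M^k_\nu\le\nu$ without any regularity, that is, extending \cite{BB}'s argument from bounded to full-mass potentials. I would first try to prove it for $\max(\varphi,-m)$. For bounded potentials, $\mu\le(\omega+i\partial\bar\partial\max(\varphi,-m))^n$ follows via Bedford--Taylor continuity. I would then pass $m\to\infty$ on the plurifine open set $\{\varphi>-m\}$, where the two Monge--Amp\`ere measures coincide, and control the mass of the complement by $\nu(\{\varphi\le -m\})\to0$, using that $\nu$ charges no pluripolar set.
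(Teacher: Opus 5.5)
Your outer framework is the one the paper uses in its last step. You compute the total mass $\frac{(2\pi)^nN_k}{k^n}\to\int_X\omega^n$, reduce to showing that every weak limit $\mu$ satisfies $\mu\le\nu$, and truncate by $\max(\varphi,-m)$. The gap is that the decisive input is treated as if it were free. Since $K^k_\varphi\le K^k_{\max(\varphi,-m)}$ everywhere and $e^{-k\varphi}=e^{-k\max(\varphi,-m)}$ on $\{\varphi\ge -m\}$, one has (as in \eqref{eq: important_est}) $M^k_\varphi\le \mathbbm{1}_{\{\varphi\le -m\}}M^k_\varphi+M^k_{\max(\varphi,-m)}$ on all of $X$. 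Hence $\mu\le\mu_m+\omega^n_{\max(\varphi,-m)}$, with $\mu_m$ a weak limit of $\mathbbm{1}_{\{\varphi\le -m\}}M^k_\varphi$. To conclude you need $\mu_m(X)\to0$, i.e.\ the uniform-in-$k$ estimate $\limsup_{k\to\infty}\int_{\{\varphi\le c\}}M^k_\varphi\to 0$ as $c\to-\infty$.

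The fact $\nu(\{\varphi\le -m\})\to0$ concerns the measure you are trying to identify as the limit. It gives no control on how much mass the $M^k_\varphi$ put near the polar set $\{\varphi=-\infty\}$, and a priori $\mu$ could charge that set. This is precisely the difficulty for $\varphi\notin\mathcal E^1_\omega$. The paper proves \eqref{eq: B_k_est} by combining three ingredients:
\begin{enumerate}[(a)]
\item the halving inequality $\frac{1-\varepsilon_k}{2^n}M^k_\varphi\le M^{2k}_{\varphi/2}$ (Proposition~\ref{prop: Fin_quant});
\item Berndtsson's comparison principle (Theorem~\ref{thm: Bernds_comp}) for $\varphi/2$ versus $\frac1\alpha P(\alpha\varphi)$, via $\{\varphi\le c\}\subset\{\frac1\alpha P(\alpha\varphi)\le\frac\varphi2+\frac c2\}$;
\item the lower bound $M^{2k}_{\frac1\alpha P(\alpha\varphi)}\ge(1-\wt\varepsilon_k)(1-\frac1\alpha)^n\omega^n$ of Proposition~\ref{prop: BD_est}.
\end{enumerate}
The lower bound in (c) must have the sharp constant, which is why the optimal Ohsawa--Takegoshi Theorem~\ref{ot} is needed. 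With $C^{-1}a^n\omega^n$, $C>1$, one would only get $2^n\big(1-C^{-1}(1-\frac1\alpha)^{n+1}\big)\int_X\omega^n$, which does not tend to $0$. Nothing in your proposal plays this role.

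The bounded case is not established by what you write either. The mean-value argument of \cite{BB} (reproduced in the proof of Theorem~\ref{mt}(i)) needs $\Delta u\le 4nA$ to get $\frac{1}{|\partial B_\rho|}\int_{\partial B_\rho}u\,d\sigma\le u(z)+A\rho^2$. For merely bounded psh $u$ there is no such control at scale $1/\sqrt k$, and the suggested integration by parts of $\log K_{ku}-ku$ is not an argument. Also, $M^k_\varphi$ is not $\omega^n_{P^k_\varphi}$, so $P^k_\varphi\to\varphi$ together with continuity of the Monge--Amp\`ere operator says nothing directly about $M^k_\varphi$. Bedford--Taylor continuity does not help: for $\psi_j\searrow\psi$ you only get $M^k_\psi\le e^{k(\psi_j-\psi)}M^k_{\psi_j}$. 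This factor is unbounded as $k\to\infty$ for fixed $j$, even for continuous $\psi$.

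The paper handles all of $\mathcal E^1_\omega$, in particular every $\max(\varphi,c)$, in Theorem~\ref{thm: E^1_Berg_Meas_conv}. It uses the Berman--Witt Nystr\"om variational method:
\begin{enumerate}[(a)]
\item concavity of $t\mapsto I_k(H^k_{\varphi+tf})$ and of $t\mapsto I(P(\varphi+tf))$;
\item the convergence $I_k(H^k_{\varphi+tf})\to I(P(\varphi+tf))$ (Propositions~\ref{prop: I_k_conv} and \ref{prop: perturb_dp_conv}, from \cite{BFM} and \cite{BB10});
\item the derivative formulas \eqref{eq: deriv_I_P}.
\end{enumerate}
This is a global argument and does not pass through any local pointwise upper bound on $K_{ku}e^{-ku}$.
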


As far as we are aware, the most general result available of this kind is that of Berman–Witt Nyström \cite{BW08} (c.f. \cite{BBWN11}), which covers only the case where $\varphi_\nu$ is continuous. Our first observation is that - using additional ideas from \cite{BFM} - the argument of \cite{BW08} applies also when $\varphi_\nu$ is merely bounded (or of finite energy). Of course, a typical element of $\mathcal E_\omega$ is neither bounded nor of finite energy \cite[Example 2.14]{GZ07}. This is precisely the main difficulty, which we overcome by combining the optimal estimates of Theorem \ref{ot} with the full mass version of Berndtsson’s quantum comparison principle \cite{Bern15}, thereby establishing Theorem \ref{thm: weak_conv_Radon}.
\smallskip

If one is comfortable working with partial Bergman kernels and partial Bergman measures (as in \cite{CMM17, DX24}), one can define $M_\varphi^k$ for any $\varphi\in \mathrm{PSH}_\omega$. Due to \cite[Theorem 1.1]{DX24}, a necessary requirement for $M_\varphi^k$ to converge weakly to $\omega_\varphi^n$ is that $\varphi$ have $\mathcal I$-good singularity type (in the sense of \cite{Xi24}). It is an interesting question whether this is also sufficient, and we will return to this problem in a sequel.

\subsection{Acknowledgments.} We thank Robert Berman,  Bo Berndtsson, Dan Coman, Eleonora Di Nezza, Siarhei Finski, Yu-Chi Hou, L\'aszl\'o Lempert, Johannes Testorf, Mingchen Xia, and Kewei Zhang  for discussions related to the topic of the paper over the years. 
Most of this research was done during the first author's stay at the University of Maryland for the academic year 2024/25. He is
grateful for the hospitality and stimulating research atmosphere. 
The first author was partially supported by the National Science Centre (NCN) grant no. 2023/51/B/ST1/01312.
The second author was partially supported by a Simons Fellowship and NSF grant DMS-2405274. 

\section{Preliminaries}

\subsection{Comparison of local Bergman kernels.} We recall a  well-known localization 
result concerning the Bergman kernels of a domain $\Omega$ and of a ball contained within it. The proof uses the H\"ormander estimate in a standard way.
\begin{lem} \label{lem: local_berg_compare}
Let $\Omega \subset \mathbb C^n$ be a bounded pseudoconvex domain and $u$ psh in $\Omega$. 
Suppose that $z_0 \in \Omega$, $v \in \mathbb C^n$ and $D_r(z_0) \Subset \Omega$ , where $D_r(z_0)$ is either 
the ball or the polydisk centered at $z_0$ with radius $r$. There exists $C=C(n,r,\text{diam}(\Omega))$ 
such that 
\begin{equation}\label{eq: K_local_est}
  K_{\Omega,u}(z_0)\leq K_{D_r(z_0),u}(z_0)\leq CK_{\Omega,u}(z_0),
\end{equation}
\begin{equation}\label{eq: wtK_local_est}
  \wt K_{\Omega,u}(z_0;v)\leq \wt K_{D_r(z_0),u}(z_0;v) 
               \leq C\wt K_{\Omega,u}(z_0;v).
       \end{equation}
\end{lem}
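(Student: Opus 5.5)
The left inequalities in \eqref{eq: K_local_est} and \eqref{eq: wtK_local_est} follow directly from restriction. If $f\in\mathcal O(\Omega)$ satisfies $\int_\Omega|f|^2e^{-u}d\lambda\le 1$, then $f|_{D_r(z_0)}$ is a competitor for the extremal problem on $D_r(z_0)$: it has the same value at $z_0$ and the same derivative $f'(z_0)\cdot v$. In the $\wt K$ case the normalization $f(z_0)=0$ is also preserved. Taking suprema gives $K_{\Omega,u}(z_0)\le K_{D_r(z_0),u}(z_0)$ and likewise for $\wt K$.

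The right inequalities require extending a local extremal function to $\Omega$, and I will do this with the Hörmander $L^2$ estimate in its standard form. Fix $g\in\mathcal O(D_r(z_0))$ with $\int_{D_r}|g|^2e^{-u}\le1$. Let $\chi\in C_c^\infty(D_{r/2}(z_0))$ be a cutoff with $\chi\equiv1$ on $D_{r/4}(z_0)$ and $|\bar\partial\chi|\le C/r$. We seek a solution of
\[
\bar\partial h=\alpha:=g\,\bar\partial\chi
\]
on $\Omega$ and set $f=\chi g-h$. The weight is
\[
\psi=u+2n\log|z-z_0|+|z-z_0|^2,
\]
which is psh on the pseudoconvex domain $\Omega$ and satisfies $i\partial\bar\partial\psi\ge i\partial\bar\partial|z|^2$. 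Hörmander's estimate then gives a solution $h$ with
\[
\int_\Omega|h|^2e^{-\psi}\le\int_\Omega|\alpha|^2e^{-\psi}.
\]
The form $\alpha$ is supported in the annulus $\{r/4\le|z-z_0|\le r/2\}$, where the factor $|z-z_0|^{-2n}$ is bounded by $C(r)$. Hence
\[
\int|\alpha|^2e^{-\psi}\le C(n,r)\int_{D_r}|g|^2e^{-u}\le C(n,r).
\]
On the other side, $e^{-\psi}\ge e^{-u}\,\mathrm{diam}(\Omega)^{-2n}e^{-\mathrm{diam}(\Omega)^2}$, so $\int_\Omega|h|^2e^{-u}\le C$ with $C=C(n,r,\operatorname{diam}\Omega)$. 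Since $\chi g$ trivially satisfies the same kind of bound, $f\in\mathcal O(\Omega)$ with $\int_\Omega|f|^2e^{-u}\le C$.

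The remaining step, and the only subtle one, is to check that $f$ reproduces the data of $g$ at $z_0$. Near $z_0$ we have $\bar\partial h=0$, so $h$ is holomorphic there, and $e^{-\psi}\sim|z-z_0|^{-2n}e^{-u}$. Since $u$ is psh it is bounded above, so $e^{-u}$ is bounded below near $z_0$. Therefore $\int|h|^2|z-z_0|^{-2n}<\infty$ near $z_0$, which forces $h(z_0)=0$. This gives $f(z_0)=g(z_0)$, and hence $K_{D_r,u}(z_0)\le C K_{\Omega,u}(z_0)$ after normalizing $f/\sqrt C$.

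For $\wt K$ we also need $h'(z_0)=0$, so that $f'(z_0)\cdot v=g'(z_0)\cdot v$ (and $f(z_0)=g(z_0)=0$). To get this I will use the stronger weight $2(n+1)\log|z-z_0|$ in place of $2n\log|z-z_0|$. Finiteness of $\int|h|^2|z-z_0|^{-2n-2}$ near $z_0$ forces $h$ to vanish to order $2$ at $z_0$. On the annulus this stronger weight is still bounded by a constant depending only on $n$ and $r$, so the rest of the argument, and the dependence $C=C(n,r,\operatorname{diam}\Omega)$, are unchanged. For the polydisk the same argument works with the cutoff adapted to polydisk radii, since $D_{r/2}$ still has boundary at distance comparable to $r$ from $z_0$.
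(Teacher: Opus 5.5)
Your proposal is correct and follows essentially the same route as the paper: you cut off the local competitor, solve $\bar\partial h=g\,\bar\partial\chi$ via H\"ormander's estimate with the singular weight $u+2n\log|z-z_0|$ (the paper uses the factor $(1+|z|^2)^{-2}$ where you add $|z-z_0|^2$ to the weight), and use non-integrability of $|z-z_0|^{-2n}$ to force $h(z_0)=0$. Your switch to the weight $2(n+1)\log|z-z_0|$ for the $\wt K$ estimate, which forces $h$ to vanish to second order, is exactly the modification the paper leaves implicit when it says that case is ``argued almost the same way.''
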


\begin{proof} By $C$ we will denote possibly different constants depending on the
required quantities (we will keep this convention later on as well).
We will only prove the estimates of \eqref{eq: K_local_est}, as the estimates of 
\eqref{eq: wtK_local_est} are argued almost the same way. The first inequality is clear.
To prove the second one  
we may assume that $z_0=0$, $D_r=D_r(0)$ and consider $f \in \mathcal O(D_r)$ with 
$\int_{D_r} |f|^2 e^{-u}d\lambda \leq 1$.
Let $\rho \in C^\infty_c(D_r)$ be a cutoff function such that $\rho|_{D_{r/2}} = 1$ 
and that $|\nabla \rho| \leq 10/r$. 
Using H\"ormander's estimate, we can produce a smooth function $\gamma \in C^\infty(\Omega)$ solving 
$\bar \partial \gamma = f \bar \partial \rho $ satisfying the estimate:
$$\int_\Omega |\gamma|^2 e^{-u -2n\log |z|}(1+|z|^2)^{-2}d\lambda 
   \leq \int_\Omega |\nabla \rho|^2 |f|^2 e^{-u-2n\log |z|}d\lambda 
   \leq C\int_{D_r} |f|^2 e^{-u}d\lambda\leq C.$$
Finiteness of the integral on the left  implies that $\gamma(0)=0$, hence $F:=f \rho - \gamma \in \mathcal O(\Omega)$ satisfies 
$$F(0) = f(0) \ \textup{ and } \ \int_\Omega |F|^2 e^{-u}d\lambda \leq C.$$ 
\vskip -20pt
\end{proof}

\subsection{Comparison of the local and K\"ahler Bergman kernels.}  For this subsection let $(X,\omega)$ be a compact K\"ahler manifold with a positive Hermitian line bundle $(L,h)$, with curvature $\Theta(h) = -i \partial \bar \partial \log h = \omega$.
\smallskip

We start with providing the following formula for the K\"ahler Bergman metric, the K\"ahler analog of \eqref{eq: Berg_metr_formula}. The proof is  standard and carries over from the local case (\cite[Theorem 2.7]{Bl}). 
\begin{lem}\label{lem: Bergman_formula} Let $\varphi \in \mathcal E_\omega$, $p \in X$ and $v \in T_p X$. We have the following formula:
\begin{flalign}\label{eq: Blocki_notes_formula}
\omega_{P^k_\varphi}\big|_p(v,v) &= \frac{\wt K^k_\varphi(p;v)}{k {K^k_\varphi}(p)},
\end{flalign}
where 
$$\wt K^k_\varphi(p;v)=\sup_{s \in H^0(X,L^k), \ s(p)=0, \ H^k_\varphi(s, s) \leq 1} h^k(\partial s_p(v),\partial s_p(v))$$
and  $\partial s_p: T_p X \to L^k_p$ is the `Jacobian map' of $s$ at its critical point $p$.
\end{lem}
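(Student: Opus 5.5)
The statement to prove is Lemma \ref{lem: Bergman_formula}, the K\"ahler analogue of \eqref{eq: Berg_metr_formula}. The plan is to reduce it to a local computation in a holomorphic frame adapted to $p$, and then copy Bergman's argument for the unweighted kernel.

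\textbf{Adapted frame and orthonormal basis.}
\begin{enumerate}[(1)]
\item Near $p$, choose holomorphic coordinates $z$ centered at $p$. Choose a local holomorphic frame $e$ of $L$ normalized so that $\phi:=-\log h(e,e)$ satisfies $\phi(p)=0$ and $d\phi(p)=0$. This is possible by multiplying $e$ by $e^{\ell(z)}$ for a suitable holomorphic affine function $\ell$.
\item Write each section as $s=fe^k$. Then $h^k(s,s)=|f|^2e^{-k\phi}$, and
\[
P^k_\varphi=\frac1k\log\sum_j|f_j|^2-\phi .
\]
Hence $\omega_{P^k_\varphi}=\frac1k i\partial\bar\partial\log\sum_j|f_j|^2$, because $i\partial\bar\partial\phi=\omega$.
\item Since $\sum_j|f_j|^2$ does not depend on the choice of orthonormal basis, pick a basis $\{e_j\}$ of $(H^0(X,L^k),H^k_\varphi)$ adapted to $p$:
\begin{itemize}
\item $e_1$ is orthogonal to the hyperplane $\{s: s(p)=0\}$;
\item $e_2$ is orthogonal, inside that hyperplane, to the subspace of sections vanishing to second order in the direction $v$.
\end{itemize}
Concretely, inside $\{s(p)=0\}$ we take $e_2$ orthogonal to the kernel of the linear functional $s\mapsto\partial s_p(v)$.
\item With this choice, $f_j(p)=0$ for $j\geq2$ and $\partial f_j(p)\cdot v=0$ for $j\geq3$.
\end{enumerate}
If $\{s(p)=0\}$ is all of $H^0$, or if the functional $s\mapsto\partial s_p(v)$ vanishes on it, the corresponding quantities are zero and the formula degenerates trivially.

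\textbf{Computing the metric.} Set $S=\sum_j|f_j|^2$. The standard formula gives
\[
\partial_v\bar\partial_v\log S=\frac{S\sum_j|\partial_vf_j|^2-\bigl|\sum_j\bar f_j\partial_vf_j\bigr|^2}{S^2}.
\]
At $p$ we have $S=|f_1|^2$ and $\sum_j\bar f_j\partial_v f_j=\bar f_1\partial_vf_1$, so the numerator reduces to $|f_1|^2|\partial_v f_2|^2$. Therefore
\[
\omega_{P^k_\varphi}|_p(v,v)=\frac{|\partial_vf_2(p)|^2}{k|f_1(p)|^2}.
\]

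\textbf{Identifying the two extremal quantities.}
\begin{itemize}
\item By the extremal characterization \eqref{eq: K_k_def} and Cauchy--Schwarz, $K^k_\varphi(p)=|f_1(p)|^2$, since $h^k=|f|^2$ at $p$ where $\phi(p)=0$.
\item Let $s$ satisfy $s(p)=0$ and $H^k_\varphi(s,s)\leq1$. Then $s$ lies in the span of $e_2,e_3,\dots$, so $\partial_vs(p)=c_2\partial_vf_2(p)$ with $|c_2|\leq1$. Because $s(p)=0$, the Jacobian $\partial s_p$ is intrinsic, and $h^k(\partial s_p(v),\partial s_p(v))=|\partial_v f(p)|^2$ by the normalization $\phi(p)=0$. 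Equality is attained at $s=e_2$. Hence $\wt K^k_\varphi(p;v)=|\partial_vf_2(p)|^2$.
\end{itemize}
Combining these with the previous step gives \eqref{eq: Blocki_notes_formula}.

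\textbf{Remaining points.} Two things need brief justification.
\begin{itemize}
\item For $\varphi\in\mathcal E_\omega$ the inner product $H^k_\varphi$ is a genuine positive definite Hermitian form on the finite dimensional space $H^0(X,L^k)$. This holds because Lelong numbers vanish, so $e^{-k\varphi}$ is integrable. Consequently everything above is finite dimensional linear algebra, and no regularity of $\varphi$ is needed.
\item The second point, which is the only mildly delicate one, is the normalization of $\phi$ at $p$. We need $\phi(p)=0$ so that $h^k$ at $p$ equals $|f|^2$, and we need $\partial_v$ to act on $s$ intrinsically. The latter holds because it is only applied to sections vanishing at $p$, together with the first-order normalization $d\phi(p)=0$.
\end{itemize}
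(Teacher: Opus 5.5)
Your proposal is correct and follows essentially the paper's proof. Both arguments choose an orthonormal basis adapted to $p$ (with $e_j(p)=0$ for $j\geq 2$ and $\partial(e_j)_p(v)=0$ for $j\geq 3$), compute $\frac{i}{k}\partial\bar\partial\log\sum_j|f_j|^2$ at $p$, and identify the two resulting quantities with $K^k_\varphi(p)$ and $\wt K^k_\varphi(p;v)$. The only difference is that you normalize the frame so that $h(e,e)(p)=1$, whereas the paper multiplies numerator and denominator by the common factor $h^k$ at $p$; your extra condition $d\phi(p)=0$ is unnecessary, since $s(p)=0$ alone makes $\partial s_p$ intrinsic.
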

\begin{proof}Let $e_1, \ldots, e_{N_k}$ be an ONB of $H^0(X,L^k)$ with respect to $H^k_{\varphi}$, such that $e_j(p) =0$ for $j \geq 2$ and $\partial {(e_j)}_p(v)=0$ for $j \geq 3$. We consider a coordinate chart $U$ near $p$. In this chart we can think of $e_j|_U$ as holomorphic functions and $h|_U$ is a positive valued function. Using \eqref{eq: K_k_def} we arrive at 
\begin{flalign*}
{\omega_{P^k_\varphi}}|_U &= \frac{i}{k} \partial \bar \partial \bigg( \sum_{j=1}^{N_k} |e_j|^2\bigg)
\end{flalign*}
Restricting to the value $p$ and plugging the vector $v \in T_p X$:
\begin{flalign}\label{eq: Bergman_metric_e1}
\omega_{P^k_\varphi}|_p(v,v) = \frac{1}{k}\frac{|\frac{\partial  e_1}{ \partial v} (p)|^2}{|e_0(p)|^2}= \frac{1}{k}\frac{h^k((\partial  e_1)_p(v),(\partial  e_1)_p(v))}{{K^k_\varphi}(p)},
\end{flalign}
where in the last step, we multiplied both the numerator and denominator with $h^k$. This immediately gives that the right hand side dominates the left hand side of \eqref{eq: Blocki_notes_formula}.
For the reverse direction, let $s \in H^0(X,L^k)$ with $H^k_\varphi(s,s) \leq 1$ and $s(p) =0$. Using the above ONB, we can write $s = \sum_{j = 2}^{N_k} {s_j } e_j$, where $\sum_{j = 2}^{N_k} |s_j|^2 \leq 1$.
We get that
$$\frac{1}{k} h^k(\partial s_p(v), \partial s_p(v)) = |s_1|^2 \frac{1}{k} h^k(\partial (e_1)_p(v), \partial (e_1)_p(v)) \leq \frac{1}{k} h^k(\partial (e_1)_p(v), \partial (e_1)_p(v)).$$
After dividing this inequality with $h^k(e_0(p),e_0(p))$ and comparing with \eqref{eq: Bergman_metric_e1}, we get that the left hand side dominates the right hand side in \eqref{eq: Blocki_notes_formula}.
\end{proof}

The proof of Lemma \ref{lem: local_berg_compare} relied on the fact that the trivial line bundle is positively curved on domains in $\mathbb{C}^n$. On a general complex manifold this is no longer the case. Nevertheless, a variant of Lemma \ref{lem: local_berg_compare} can still be established for manifolds, and this will play an important role in the context of Kähler quantization:

\begin{lem}\label{lem: Berg_kern_compare_compact} 
Assume that $\varphi\in \mathcal E_\omega$ is such that $\omega_\varphi \geq a \omega$ for some $a>0$.
Fix $p \in X$ and let $P(p,3)\subset X$ be a coordinate polydisk of (multi)radius $3$ 
centered at $p$ such that $h|_{P(p,3)} = e^{-g}$ for some $g \in C^\infty(P(p,3))$.
Then there exists a positive constant $C$, depending on this coordinate polydisk and $\omega$,  
such that for any $k \geq C/a$ and $z\in P(p,1)$ we have the following 
estimates between the K\"ahler and local Bergman kernels at $z$:
\begin{equation}\label{eq: K_est_compact}
  K^k_{\varphi}(z)\leq K_{P(z,1),k\varphi+kg - \log ( \det (2\omega))}(z)e^{-k g(z)} 
     \leq \bigg(1 + \frac{C}{(ka)^{1/2}}\bigg)K^k_{\varphi}(z),
\end{equation}
\begin{equation}\label{eq: wtK_est_compact}
  \wt K^k_{\varphi}(z;v)\leq\wt K_{P(z,1),k\varphi+kg - \log ( \det (2\omega))}(z,v)e^{-k g(z)} 
       \leq \bigg(1 + \frac{C}{(ka)^{1/2}}\bigg)\wt K^k_{\varphi}(z;v),
\end{equation}
where $\det(2\omega)$ is the determinant of the Hermitian matrix $2 \partial_j \partial_{\bar k} g$
and $v \in T_zX$. 
\end{lem}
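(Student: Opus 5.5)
The first inequality in each line is a restriction argument; the second is an extension argument using H\"ormander's $L^2$ estimate on $X$ for the bundle $L^k$ with metric $h^ke^{-k\varphi}$, whose curvature $k\omega_\varphi\geq ka\omega$ is strictly positive. This replaces the positivity of the trivial bundle used in Lemma \ref{lem: local_berg_compare}.

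\textbf{Trivialization and the first inequality.} On $P(p,3)$, write a section $s\in H^0(X,L^k)$ as a holomorphic function $f$ with $h^k(s,s)=|f|^2e^{-kg}$. Since $\omega=i\partial\bar\partial g$ locally, we have $\omega^n=\det(2\omega)\,d\lambda$ up to the normalization fixed in the statement. Hence
\[
\int_{P(z,1)}|f|^2e^{-k\varphi-kg+\log\det(2\omega)}\,d\lambda\leq H^k_\varphi(s,s).
\]
So $f|_{P(z,1)}$ is a competitor for $K_{P(z,1),k\varphi+kg-\log\det(2\omega)}(z)$, and $h^k(s,s)(z)=|f(z)|^2e^{-kg(z)}$. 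Taking the supremum gives the first inequality in \eqref{eq: K_est_compact}. The same argument, applied to sections vanishing at $z$, gives the first inequality in \eqref{eq: wtK_est_compact}, since the Jacobian $\partial s_z(v)$ corresponds to $f'(z)\cdot v$ in the trivialization.

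\textbf{The reverse inequality.} Take $f\in\mathcal O(P(z,1))$ with unit weighted norm, and choose a cutoff $\rho$ at scale $\delta$ around $z$: $\rho=1$ on $P(z,\delta/2)$, $\rho$ supported in $P(z,\delta)$, and $|\bar\partial\rho|\lesssim1/\delta$. Solve $\bar\partial\gamma=f\bar\partial\rho$ as an $L^k$-valued equation on $X$. The solution must vanish at $z$ (and, in the $\wt K$ case, to second order at $z$). To force this, add the singular weight $2n\chi\log|w-z|$ (respectively $2(n+1)\chi\log|w-z|$), where $\chi$ is a cutoff.

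\textbf{Controlling the singular weight.} Its $i\partial\bar\partial$ has a negative part, supported where $\nabla\chi\neq0$, of size $O(1)\omega$ (or $O(\delta^{-2})\omega$ if one also lets $\chi$ shrink). This is absorbed by $ka\omega$ as soon as $k\geq C/a$. The remaining curvature of $L^k\otimes K_X^{-1}$, namely the Ricci term, is also $O(1)$, so the total curvature stays $\geq\frac{ka}{2}\omega$. H\"ormander's estimate then gives
\[
\|\gamma\|^2\leq \frac{C}{ka}\int|\bar\partial\rho|^2|f|^2e^{-k\varphi-kg}|w-z|^{-2n}\omega^n.
\]
On the support of $\bar\partial\rho$ we have $|w-z|\sim\delta$. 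So the right-hand side is at most $\frac{C}{ka\,\delta^{2n+2}}$ times the local norm of $f$. The section $F=\rho f-\gamma$ then satisfies $F(z)=f(z)$ (and $\partial F_z=f'(z)$ in the $\wt K$ case), with
\[
\|F\|\leq\|\rho f\|+\|\gamma\|\leq 1+C(ka)^{-1/2}.
\]
Squaring gives the factor $1+C(ka)^{-1/2}$ after renormalizing, with $\delta$ fixed, say $\delta=1$. The point is that $\|\rho f\|\leq\|f\|_{\text{local}}$ has constant exactly $1$, so only the $\gamma$ term contributes an error, and it decays like $(ka)^{-1/2}$.

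\textbf{Main obstacle.} The delicate step is the H\"ormander estimate on the compact manifold with a merely $\mathcal E_\omega$ weight plus the logarithmic pole. I would handle it with Demailly's $L^2$ theorem for singular metrics on compact K\"ahler manifolds (valid on $X$ minus a hypersurface, hence complete K\"ahler), applied to $L^k\otimes K_X^{-1}$ with the volume form $\omega^n$ absorbing $K_X$. One must check that the curvature lower bound $ka\omega-C\omega-(\text{pole correction})\geq\frac{ka}{2}\omega$ holds in the sense of currents for $k\geq C/a$. With constants depending only on the polydisk and $\omega$, this yields both estimates.
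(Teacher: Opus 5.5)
Your proposal is correct and follows the paper's proof essentially step by step. Restriction gives the first inequalities, and for the reverse ones the paper likewise solves $\bar\partial\gamma=f\bar\partial\rho$ via H\"ormander's estimate for $L^k=(L^k\otimes K_X^*)\otimes K_X$ in Demailly's singular-metric form, adds the pole $2n\rho\log|\cdot-z|$ to force $\gamma(z)=0$, absorbs the Ricci and pole corrections into $\frac{ka}{2}\omega$ for $k\geq C/a$, and concludes from $\|F\|\leq 1+(C/ka)^{1/2}$. The differences are cosmetic: the paper uses the same cutoff $\rho$ for the pole rather than a separate $\chi$, and you spell out the pole of order $2(n+1)$ needed in the $\widetilde K$ case, which the paper dismisses as ``almost the same.''
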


\begin{proof} We only show the inequalities of \eqref{eq: K_est_compact}, as the arguments  
for \eqref{eq: wtK_est_compact} are almost the same. Let $f \in H^0(X,L^k)$. Then, 
$f|_{P(z,1)} \in \mathcal O(P(z,1)) \simeq H^0(P(z,1),L^k)$ and we have the following inequality
  $$ \int_{P(z,1)} |f|^2 e^{-k(g + \varphi) + \log ( \det (2\omega))}d\lambda
     = \int_{P(z,1)} h^k(f,f) e^{-k\varphi}\omega^n 
      \leq  \int_{X} h^k(f,f) e^{-k\varphi}\omega^n.$$
This gives the first inequality in \eqref{eq: K_est_compact}.

For the other inequality, we consider $f \in \mathcal O(P(z,1)) \simeq H^0(P(z,1),L^k)$ 
satisfying the estimate
$\int_{P(z,1)} |f|^2 e^{-k(g + \varphi) + \log ( \det (2\omega))}d\lambda \leq 1$.
As in the local case, we find an appropriate cutoff function $\rho \in C^\infty_c(P(z,1))$ equal to $1$ on $P(z,1/2)$, and aim to find a smooth section $\gamma \in C^\infty(X,L^k)$ solving $\bar \partial \gamma = f \bar \partial \rho$ such that $\gamma(z)=0$ and $\int_X h^k(\gamma,\gamma) e^{-k\varphi} \omega^n$ is under control. For this, we choose $k$ big enough so that $k \omega_\varphi - i\partial\bar \partial (\log ( \det (2\omega)) + 2n \rho \log |\cdot - z|) > \frac{ka}{2} \omega$ on $P(z,1)$. This can be done for $k \geq C/a$.

We apply H\"ormander's theorem for the bundle $L^k = (L^k \otimes K_X^*) \otimes K_X$ \cite[Corollary 5.3]{DemBook12}, and after possibly increasing $C$, we obtain: 
\begin{flalign*}
\int_X h^k(\gamma,\gamma) e^{-k\varphi } \omega^n  &\leq \int_X h^k(\gamma,\gamma) e^{-k\varphi - \rho \log |\cdot - z|^{2n}} \omega^n  
\leq \frac{2}{ka} \int_X |\nabla^\omega \rho|_\omega^2 h^k(f,f) e^{-k\varphi - \rho \log |\cdot - z|^{2n}}  \omega^n\\
&\leq \frac{C}{ka} \int_{P(z,1)} h^k(f,f) e^{-k\varphi} \omega^n\leq \frac{C}{ka}.
\end{flalign*}
This implies that $F = \rho f - \gamma \in H^0(X,L^k)$, $F(z) = f(z)$ and 
$$\int_X h^k(F,F) e^{-k\varphi} \omega^n \leq  \bigg(1 + \bigg(\frac C{ka}\bigg)^{1/2}\bigg)^{2}.$$
Since $(C/ka)^{1/2}$ dominates $C/ka$ for $k$ big enough, the estimate of \eqref{eq: K_est_compact} follows. 
\vskip -15pt
\end{proof}

\subsection{Berndtsson's quantized comparison principle} 
We will need the full mass version of Berndtsson’s quantized comparison principle \cite[Theorem 1.1]{Bern15}. Since the original statement is formulated under slightly different assumptions, we also include a brief argument:

\begin{thm}\label{thm: Bernds_comp}
Let $\varphi,\psi \in \mathcal E_\omega$. Then
\begin{equation}\label{eq: Bernds_comp}
\int_{\{ \psi \leq \varphi\}} M^k_\varphi \leq \int_{\{\psi \leq \varphi\}} M^k_\psi.
\end{equation}
\end{thm}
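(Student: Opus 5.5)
The plan is to follow Berndtsson's original linear-algebra argument. Every term in it is finite because potentials in $\mathcal E_\omega$ have zero Lelong numbers. By Skoda's theorem, $e^{-k\varphi}$ and $e^{-k\psi}$ are therefore integrable for every $k$. So $H^k_\varphi$ and $H^k_\psi$ are genuine positive definite Hermitian inner products on the finite dimensional space $H^0(X,L^k)$, and the measures $M^k_\varphi$, $M^k_\psi$ are finite measures of total mass $(2\pi)^n N_k/k^n$. The set $E:=\{\psi\le\varphi\}$ is Borel, since $\varphi,\psi$ are usc functions with values in $[-\infty,\infty)$. Hence both sides of \eqref{eq: Bernds_comp} make sense.

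The first step is to diagonalize the two inner products simultaneously. We pick a basis $s_1,\dots,s_{N_k}$ of $H^0(X,L^k)$ that is orthonormal for $H^k_\varphi$ and orthogonal for $H^k_\psi$, with $H^k_\psi(s_j,s_j)=\lambda_j>0$. Then $\{s_j\}$ and $\{s_j/\sqrt{\lambda_j}\}$ are orthonormal bases for $H^k_\varphi$ and $H^k_\psi$ respectively. By \eqref{eq: K_k_def}, after cancelling the common factor $(2\pi)^n/k^n$,
$$\frac{k^n}{(2\pi)^n}\big(M^k_\varphi-M^k_\psi\big)=\sum_{j=1}^{N_k} h^k(s_j,s_j)\Big(e^{-k\varphi}-\frac{e^{-k\psi}}{\lambda_j}\Big)\omega^n .$$
It therefore suffices to show termwise that
$$T_j:=\int_E h^k(s_j,s_j)\Big(e^{-k\varphi}-\frac{e^{-k\psi}}{\lambda_j}\Big)\omega^n\le 0 .$$

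The second step is this termwise inequality, proved by splitting into two cases. The normalizations give $\int_X h^k(s_j,s_j)\big(e^{-k\varphi}-e^{-k\psi}/\lambda_j\big)\omega^n=1-1=0$. Write $a=e^{-k\varphi}$ and $b=e^{-k\psi}$. On $E$ we have $a\le b$, and on $X\setminus E$ we have $a>b$.
\begin{enumerate}[(a)]
\item If $\lambda_j\le 1$, then on $E$ we have $a-b/\lambda_j\le b-b/\lambda_j\le 0$, so $T_j\le0$ directly.
\item If $\lambda_j\ge1$, then by the vanishing of the total integral $T_j=-\int_{X\setminus E}h^k(s_j,s_j)(a-b/\lambda_j)\omega^n$. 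On $X\setminus E$ we have $a>b\ge b/\lambda_j$, so again $T_j\le 0$.
\end{enumerate}
Summing over $j$ gives \eqref{eq: Bernds_comp}.

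The only point needing care, rather than a genuine obstacle, is justifying the manipulations for unbounded potentials. We need every integral over $E$ and over $X\setminus E$ to be finite, so that splitting the total integral is legitimate. This follows from the integrability of $h^k(s_j,s_j)e^{-k\varphi}$ and $h^k(s_j,s_j)e^{-k\psi}$ noted above. The pluripolar set $\{\varphi=-\infty\}\cup\{\psi=-\infty\}$ is negligible for $\omega^n$, so the convention on it does not matter. No regularity of $\varphi,\psi$ beyond membership in $\mathcal E_\omega$, and in fact beyond zero Lelong numbers, is used.
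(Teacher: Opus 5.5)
Your proof is correct, but it takes a different route from the paper.

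The paper does not diagonalize. It introduces the auxiliary weight $\max(\varphi,\psi)$ and uses only monotonicity of the Bergman kernel in the weight, which comes from the extremal characterization \eqref{eq: K_k_def}. This gives the pointwise bound $\mathbbm{1}_{\{\psi\ge\varphi\}}M^k_\psi\le \mathbbm{1}_{\{\psi\ge\varphi\}}M^k_{\max(\varphi,\psi)}$, i.e.\ \eqref{eq: important_est}, together with its symmetric counterpart. Integrating these over complementary sets, and using that the measures involved all have total mass $(2\pi)^nN_k/k^n$, yields $\int_{\{\psi\le\varphi\}}M^k_\varphi\le\int_{\{\psi\le\varphi\}}M^k_{\max(\varphi,\psi)}\le\int_{\{\psi\le\varphi\}}M^k_\psi$.

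Your argument instead works directly with the two inner products $H^k_\varphi$ and $H^k_\psi$. Simultaneous diagonalization reduces the claim to a statement about one section at a time. You then settle that by a sign analysis according to whether $\lambda_j\le 1$ or $\lambda_j\ge 1$, together with the fact that each term integrates to zero over $X$.

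\begin{itemize}
\item What your route buys: a finer, spectral conclusion, since the inequality holds separately for each simultaneous eigen-section $s_j$. It also needs no third admissible weight, so it applies verbatim to any two weights that define positive definite inner products on a finite-dimensional space of sections.
\item What the paper's route buys: brevity, and the pointwise inequality \eqref{eq: important_est} itself. That inequality is reused in the final step of the proof of Theorem \ref{thm: weak_conv_Radon}, with $\max(\varphi,c_m)$ in place of $\max(\varphi,\psi)$, and your diagonalization argument does not produce it.
\end{itemize}
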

\begin{proof}
Since $\psi \leq \max(\varphi,\psi)$ on $X$, and $\psi = \max(\varphi,\psi)$ on the set $\{\psi \geq \varphi\}$, 
we observe that
\begin{equation}\label{eq: important_est}
\mathbbm{1}_{\{\psi \geq \varphi\}} M^k_\psi  \leq \mathbbm{1}_{\{\psi \geq \varphi\}} M^k_{\max(\varphi,\psi)}.
\end{equation}
Integrating the above measures on the set $\{\psi > \varphi\}$, by equality of total masses we obtain that 
$$\int_{\{\psi \leq \varphi\}} M^k_\psi \geq \int_{\{\psi \leq  \varphi\}} M^k_{\max(\varphi,\psi)}.$$
Using symmetry, \eqref{eq: important_est} also gives,
$$\mathbbm{1}_{\{\psi \leq \varphi\}} M^k_\varphi \leq \mathbbm{1}_{\{\psi \leq \varphi\}} M^k_{\max(\varphi,\psi)}.$$
Integrating this inequality and combining it with the previous one yields \eqref{eq: Bernds_comp}.
\end{proof}

For the above argument to go through, the only requirement is that 
$H^k_\varphi(\cdot,\cdot)$ and $H^k_\psi(\cdot,\cdot)$ be well defined on $H^0(X,L^k)$. 
Thus, the assumption $\varphi,\psi \in \mathcal E_\omega$ can be further relaxed to 
$\omega$-psh functions whose Lelong numbers vanish.

\section{The $C^{1,\bar 1}$ estimates of Bergman potentials}

\subsection{The local case.}

\begin{proof}[Proof of Theorem \ref{mt}]
We prove the first estimate of \eqref{ub2}, extracting the argument from  \cite[Theorem 2.1]{BB}. Perhaps surprisingly, only subharmonicity of $u $ 
will be needed. For $f\in\mathcal O(\Omega)$ we have
\begin{flalign}\label{eq: BB_est}
     \int_\Omega|f|^2e^{-u }d\lambda
        &\geq\int_{B_r}|f|^2e^{-u}d\lambda
        =\int_0^r\int_{\partial B_\rho}|f|^2e^{-u }d\sigma\,d\rho \nonumber \\
       &\geq\int_0^r|\partial B_\rho|\exp\left(\frac 1{|\partial B_\rho|}
          \int_{\partial B_\rho}(\log|f|^2-u )d\sigma\right)d\rho\\ 
       &\geq|f(z)|^2\int_0^r|\partial B_\rho|\exp\left(-\frac 1{|\partial B_\rho|}
          \int_{\partial B_\rho}u  d\sigma\right)d\rho, \nonumber
\end{flalign}  
where the second inequality is a consequence of the Jensen inequality and
the third one of subharmonicity of $\log|f|^2$. It follows that
  $$K_{\Omega,u }(z)\leq K_{B_r,u }(z)\leq
    \left(\int_0^r|\partial B_\rho|\exp\left(-\frac 1{|\partial B_\rho|}
          \int_{\partial B_\rho}u  d\sigma\right)d\rho\right)^{-1}.$$
If $\Delta u \leq 4nA$ for some $A \geq 1$, then $u -A|\cdot-z|^2$ is superharmonic, so
\begin{equation}\label{lap-est}
  \frac 1{|\partial B_\rho|}
          \int_{\partial B_\rho}u  d\sigma\leq u(z)+A\rho^2.
\end{equation}          
We get 
  $$K_{\Omega,u }(z)\leq e^{u (z)}\left(
         2n\omega_{2n}\int_0^r\rho^{2n-1}e^{-A\rho^2}d\rho\right)^{-1}
         =e^{u (z)}A^n\left(
         n\omega_{2n}\int_0^{Ar^2}s^{n-1}e^{-s}ds\right)^{-1},$$
where $\omega_k$ denotes the volume of the unit ball in $\mathbb R^k$.
This shows the first estimate of \eqref{ub2}.\smallskip

To prove the second estimate of \eqref{ub2} we need the
following lemma:
\noqed
\end{proof}

\begin{lem}\label{lem: log_est} Suppose that $f \in \mathcal O(B_r)$, where $B_r=B(0,r)$, with 
$f(0)=0$ and $\partial f (0) \neq 0$. Then for some dimensional constant 
$C_n>0$ we have 
  $$\int_{\partial B_\rho}  \log {|f|^2} d \sigma_\rho -  \log \rho^2 
     \geq \sup_{|v|=1}\log \big( C_n |\partial f(0)\cdot v|^2\big), \ \ \rho \in (0,r).$$
\end{lem}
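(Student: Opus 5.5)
The idea is to reduce to one complex variable by slicing the sphere into complex circles through the origin. On each such disc we can divide out the zero of $f$ at the origin and apply the sub-mean value property. Throughout I read $d\sigma_\rho$ as the normalized (probability) surface measure on $\partial B_\rho$, as the statement implicitly requires. With any other normalization the argument is the same, but the constant has to be adjusted accordingly.

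\emph{Step 1: slicing.} The normalized measure on $\partial B_\rho$ is invariant under the circle action $w\mapsto e^{i\theta}w$. Hence for any integrable $F$,
$$\int_{\partial B_\rho}F\,d\sigma_\rho=\int_{\partial B_1}\Big(\frac1{2\pi}\int_0^{2\pi}F(\rho e^{i\theta}\zeta)\,d\theta\Big)d\sigma_1(\zeta).$$
This holds for $F=\log|f|^2$, which is integrable on spheres because $f\not\equiv0$ and $\log|f|$ is psh.

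\emph{Step 2: one-variable estimate.} Fix $\zeta\in\partial B_1$. The function $\lambda\mapsto f(\lambda\zeta)$ is holomorphic on the disc $\{|\lambda|<r\}$ and vanishes at $0$. So $f(\lambda\zeta)=\lambda h_\zeta(\lambda)$ with $h_\zeta$ holomorphic and $h_\zeta(0)=\partial f(0)\cdot\zeta$. Since $\log|h_\zeta|^2$ is subharmonic,
$$\frac1{2\pi}\int_0^{2\pi}\log|f(\rho e^{i\theta}\zeta)|^2d\theta=\log\rho^2+\frac1{2\pi}\int_0^{2\pi}\log|h_\zeta(\rho e^{i\theta})|^2d\theta\geq\log\rho^2+\log|\partial f(0)\cdot\zeta|^2 .$$
Integrating over $\zeta$ and using Step 1 gives
$$\int_{\partial B_\rho}\log|f|^2d\sigma_\rho-\log\rho^2\geq\int_{\partial B_1}\log|\ell(\zeta)|^2d\sigma_1(\zeta),\qquad \ell:=\partial f(0).$$

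\emph{Step 3: evaluating the integral of the linear form.} Write $\ell(\zeta)=\langle\zeta,\bar c\rangle$ with $|c|=\sup_{|v|=1}|\partial f(0)\cdot v|>0$. By unitary invariance of $\sigma_1$ we may rotate so that $\ell(\zeta)=|c|\zeta_1$. Then
$$\int_{\partial B_1}\log|\ell|^2d\sigma_1=\log|c|^2+c_n,\qquad c_n:=\int_{\partial B_1}\log|\zeta_1|^2d\sigma_1(\zeta).$$
It remains to check that $c_n>-\infty$. One can see this from the known fact that the pushforward of $\sigma_1$ under $\zeta\mapsto\zeta_1$ has density $(n-1)\pi^{-1}(1-|\zeta_1|^2)^{n-2}$ on the unit disc, against which $\log|\zeta_1|^2$ is integrable. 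Alternatively, it is a general fact that the logarithm of the modulus of a nonzero holomorphic function is integrable on spheres. Setting $C_n=e^{c_n}$ yields the claim, since the right-hand side equals $\sup_{|v|=1}\log(C_n|\partial f(0)\cdot v|^2)$.

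The main point needing care is Step 3, specifically the finiteness of $c_n$ together with the normalization of $d\sigma_\rho$. Steps 1 and 2 are routine applications of circle invariance and subharmonicity.
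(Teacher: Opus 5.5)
Your proposal is correct and follows essentially the same route as the paper. Both arguments slice $\partial B_\rho$ by complex lines through the origin and apply subharmonicity of $\log|f(\lambda\zeta)/\lambda|^2$ on each disc. Both then integrate over the sphere by Fubini/circle invariance and rotate unitarily so that $\partial f(0)=|\partial f(0)|\,dz_1$, giving $C_n=\exp\int_{\partial B_1}\log|\zeta_1|^2\,d\sigma_1$.

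Your additional check that this integral is finite fills in a point the paper leaves implicit. The pushforward-density argument covers $n\ge 2$; for $n=1$ the integral is simply $0$.
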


Here $d \sigma_\rho$ is the standard surface area probability measure on $\partial B(0,\rho)$. In case $n=1$, this result is an immediate consequence of $\log |f(z)|/|z|$ being subharmonic on the disk $P(0,r)$, and $C_1=1$ in this case.

\begin{proof} After an orthonormal change of variables we can assume that $\partial f(0) = \lambda d z_1$, where $\lambda = |\partial f(0)| =  \sup_{ ||v||=1}  |\partial f (0)(v)|$.
Fix $\rho \in (0,r)$. Let $v \in \partial B_\rho$ momentarily fixed. Using the observation before the proof 
for the one variable function $\xi \to \log |f(\xi v)|^2 / |\xi v|^2$ to conclude that 
$$\int_{\partial \D}  \log {\frac{|f( \xi v)|^2}{|\xi v|^2}} dV(\xi) \geq \log \bigg |\partial f(0) \frac{v}{|v|}\bigg|^2 = 2 \log |\lambda|  + 2 \log \frac{|v_1|}{|v|},$$
where $\D$ is the unit disk.
Now we integrate both sides with respect to $d \sigma_\rho(v)$, $v \in \partial B_\rho$, to obtain that
$$ \int_{\partial B_\rho} \int_{\partial \D}  \log {\frac{|f( \xi v)|^2}{|\xi v|^2}} dV(\xi) d \sigma_\rho(v) \geq  2 \log |\lambda|  + 2  \int_{\partial B_\rho}  \log \frac{|v_1|}{|v|} d \sigma_\rho(v).$$
Using the Fubini  theorem on the left hand side and a change or variables on the right hand side we arrive at 
$$ \int_{\partial B_\rho}  \log {|f(z)|^2} d \sigma_\rho -  \log \rho^2 \geq  2 \log |\lambda|  +  2 \int_{\partial B_1}  \log |v_1| d \sigma_1(v)=\log  \big ( C_n |\partial f (0)|^2\big) .$$
\end{proof}

\begin{proof}[Proof of Theorem \ref{mt}, contd] 
Take $f \in \mathcal O(\Omega)$ with $f(z)=0$. We can assume that  $z =0$ and $\|v\|=1$.
Using Lemma \ref{lem: log_est} in the inequalities of \eqref{eq: BB_est} we obtain that
\begin{flalign}
     \int_\Omega|f|^2e^{-u }d\lambda
        &\geq\int_{B_r}|f|^2e^{-u }d\lambda
        =\int_0^r\int_{\partial B_\rho}|f|^2e^{-u }d\sigma\,d\rho \nonumber \\
       &\geq\int_0^r |\partial B_\rho|\exp\left(\frac 1{|\partial B_\rho|}
          \int_{\partial B_\rho}(\log|f|^2-u )d\sigma\right)d\rho \nonumber \\ 
       &\geq C_n |\partial f(0)\cdot v|^2\int_0^r\rho^2|\partial B_\rho|\exp\left(-\frac 1{|\partial B_\rho|}
          \int_{\partial B_\rho}u  d\sigma\right)d\rho. \nonumber
\end{flalign}  
Now using \eqref{lap-est} similarly as before we get that
  $$\begin{aligned}
\wt K_{\Omega,u }(z;v)&\leq\frac 1{C_n}
    \left(\int_0^r\rho^2|\partial B_\rho|\exp
         \left(-\frac 1{|\partial B_\rho|}
          \int_{\partial B_\rho}u  d\sigma\right)d\rho\right)^{-1}\\
    &\leq\frac{e^{u (z)}A^{n+1}}{C_n}\left(
         n\omega_{2n}\int_0^{Ar^2}s^{n+1}e^{-s}ds\right)^{-1}.
     \end{aligned}$$
This gives the second estimate in \eqref{ub2}.

\eqref{lb} Lemma \ref{lem: local_berg_compare} implies that 
  $$K_{\Omega,u }(z)\geq\frac 1CK_{B_r,u }(z),
   \ \ \ \ 
   \wt K_{\Omega,u }(z;v)\geq\frac 1C\wt 
       K_{B_r,u }(z;v)$$
for some $C=C(n,r,\text{diam}(\Omega))$. 
After applying an affine transformation,  we can assume that $z=0$, $v=(0,\dots,0,1)$. 

The estimates of \eqref{lb1} will be a consequence of 
the following version of the Ohsawa-Takegoshi theorem: 
the first one will follow after using it $n$ times for $m=0$, and the second one will 
follow after using it once for $m=1$ and $n-1$ times for $m=0$ (see Corollary 
\ref{cor: opt_polydisk} below, stated for the unit polydisk). 
\noqed
\end{proof}

\begin{thm}\label{ot} 
Assume that $\Omega$ is pseudoconvex in $\C^{n-1}\times\D$ and
$u - a |z_n|^2$ is psh in $\Omega$ for some $a\geq 0$. Then for every 
$f\in\mathcal O(\Omega')$, where $\Omega'=\Omega\cap\{z_n=0\}$,
and $m=0,1,2,\dots$
one can find $F\in\mathcal O(\Omega)$ such that for $z'\in\Omega'$,
\begin{equation}\label{co1}
  \frac{\partial^jF}{\partial z_n^j}(z',0)=0,\ \ \ \ 
       j=0,1,\dots,m-1,
\end{equation}
\begin{equation}\label{co2}       
  \frac{\partial^mF}{\partial z_n^m}(z',0)=m!f(z')
\end{equation}         
and
  $$\int_{\Omega}|F|^2e^{-u }d\lambda
     \leq C_a\int_{\Omega'}|f|^2e^{-u }d\lambda,$$
where
\begin{equation}\label{const}
  C_a=\int_\D|\zeta|^{2m}e^{-a|\zeta|^2}d\lambda
     =\frac\pi{a^{m+1}}\int_0^a\rho^me^{-\rho}d\rho.
\end{equation}     
\end{thm}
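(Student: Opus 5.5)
The plan is to follow the $\bar\partial$-method with twisted weights that the first author used for the optimal Ohsawa--Takegoshi theorem in \cite{B}. The new ingredients are the vanishing conditions \eqref{co1}, imposed through a singular weight in $z_n$, and the extra positivity coming from $a|z_n|^2$. The target constant is natural: for $\Omega=\Omega'\times\D$, $u=\psi(z')+a|z_n|^2$ and $F=z_n^mf$, Fubini gives exactly $C_a\int_{\Omega'}|f|^2e^{-u}$. So the proof must reproduce, inequality by inequality, this one-variable model computation.

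\emph{Reductions.} By exhausting $\Omega$ with smooth strongly pseudoconvex domains, regularizing $u$ by decreasing smooth psh functions (keeping $u-a|z_n|^2$ psh), and replacing $f$ by bounded functions on slightly smaller sets, we may assume everything is smooth up to the boundary. The general case then follows from a weak $L^2$ limit. The conditions \eqref{co1}, \eqref{co2} pass to the limit because weak $L^2$ limits of holomorphic functions converge locally uniformly together with all derivatives.

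\emph{Construction.} Write $\psi:=u-a|z_n|^2$, which is psh. Fix $\ep>0$ and a cutoff $\chi_\ep(t)$ in $t=\log|z_n|^2$, equal to $1$ for $t<\log\ep^2$ and $0$ for $t>0$. Set
\[
F:=\chi_\ep(\log|z_n|^2)\,z_n^mf(z')-\gamma,
\qquad
\bar\partial\gamma=\alpha:=z_n^mf\,\bar\partial\chi_\ep(\log|z_n|^2).
\]
We solve this equation with an $L^2$ estimate with respect to the weight
\[
\phi:=\psi+a|z_n|^2+(m+1)\log|z_n|^2+\eta(t),
\]
where $\eta$ is a function still to be chosen. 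The non-integrability of $|z_n|^{-2(m+1)}$ near $\{z_n=0\}$ forces $\gamma$ to vanish to order $m+1$ along $\{z_n=0\}$, which yields \eqref{co1} and \eqref{co2}. To estimate $\gamma$ we use the Donnelly--Fefferman--Berndtsson twisted inequality: if $\beta>0$ satisfies $i\partial\bar\partial\phi-i\partial\bar\partial\beta-\beta^{-1}i\partial\beta\wedge\bar\partial\beta\ge0$ in a suitable sense, then
\[
\int|\gamma|^2\frac{e^{-\phi}}{\beta+\text{(correction)}}\le\int|\alpha|^2_{i\partial\bar\partial(\ldots)}\,e^{-\phi}.
\]
The functions $\beta=\beta(t)$ and $\eta(t)$ are chosen to depend only on $t$ (and, because of the term $a|z_n|^2=ae^t$, explicitly on $e^t$). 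With this choice every positivity requirement reduces to an ODE inequality in $t$, to which $i\partial\bar\partial\psi\ge0$ contributes only nonnegatively. As $\ep\to0$, the right-hand side concentrates on $\{z_n=0\}$ and becomes $c(\ep)\int_{\Omega'}|f|^2e^{-u}$ after integrating over the annulus $\ep<|z_n|<1$ in polar coordinates. Finally $\int|F|^2e^{-u}$ is bounded using $|F|^2\le(1+s)|\chi z_n^mf|^2+(1+s^{-1})|\gamma|^2$, or better, by building $\chi$ directly into the twisted estimate as in \cite{B}.

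\emph{Main obstacle.} The hard step is choosing $\eta,\beta$ (equivalently, solving the associated second-order ODE in $t$) so that the limiting constant is exactly
\[
\int_\D|\zeta|^{2m}e^{-a|\zeta|^2}\,d\lambda=\frac{\pi}{a^{m+1}}\int_0^a\rho^me^{-\rho}\,d\rho,
\]
rather than the cruder $\pi/a$-type bounds. I would first find the choice by requiring equality in the model case, where the extremal is $z_n^mf$. Concretely, $e^{-\eta}$ should be proportional to the incomplete integral $\int_{|\zeta|^2<e^t}|\zeta|^{2m}e^{-a|\zeta|^2}$ regarded as a function of $t$. One then checks that the resulting ODE solution satisfies the required sign conditions for all $a\ge0$, with the case $a=0$ recovering $\pi/(m+1)$.

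If the ODE bookkeeping becomes unmanageable, the fallback is the Berndtsson--Lempert/Guan--Zhou monotonicity approach. There one shows that
\[
I(t)\Big/\int_{|\zeta|^2<e^t}|\zeta|^{2m}e^{-a|\zeta|^2}d\lambda
\]
is nonincreasing in $t$, where $I(t)$ is the minimal $L^2$ norm of extensions satisfying \eqref{co1}, \eqref{co2} on $\Omega\cap\{|z_n|^2<e^t\}$; this uses Berndtsson's plurisubharmonic variation of Bergman kernels. Its limit as $t\to-\infty$ equals $\int_{\Omega'}|f|^2e^{-u}$, and its value at $t=0$ is the desired bound.
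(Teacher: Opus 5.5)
Your skeleton matches the paper's: smoothing, $\alpha=\bar\partial(\chi\,z_n^mf)$, a weight containing $(m+1)\log|z_n|^2$ to force vanishing of order $m+1$, and reduction to an ODE in $t=\log|z_n|^2$. There is a genuine gap, though. The step that carries the whole content of the theorem, an explicit choice of auxiliary functions for which the limiting constant is exactly $C_a$, is only announced, and what you propose for it does not work.

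\textbf{The weight guess.} Your choice $e^{-\eta}\propto\int_{|\zeta|^2<e^t}|\zeta|^{2m}e^{-a|\zeta|^2}d\lambda$ behaves like $\frac{\pi}{m+1}e^{(m+1)t}$ as $t\to-\infty$. It therefore cancels the factor $|z_n|^{-2(m+1)}$ in $e^{-\phi}$, so $\phi=u+O(1)$ near $\Omega'$. The mechanism you rely on for \eqref{co1}--\eqref{co2} then disappears. The disc integral is the natural normalizer in a monotonicity argument, not a weight.

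\textbf{The fallback.} It is not justified by what you cite. The Berndtsson--Lempert argument for $\Omega\cap\{|z_n|^2<e^t\}$ with weight $e^{-u}$ uses only plurisubharmonicity of the weights in $(z,\tau)$ and the asymptotics $I(t)\sim\frac{\pi}{m+1}e^{(m+1)t}\int_{\Omega'}|f|^2e^{-u}$. Since $a$ does not appear there, it yields only the constant $\pi/(m+1)=C_0$. Moreover, $J(t):=\log\int_{|\zeta|^2<e^t}|\zeta|^{2m}e^{-a|\zeta|^2}d\lambda$ satisfies $J'<m+1$ for $a>0$. So monotonicity of $I/e^{J}$ is strictly stronger than what concavity gives, and it needs an extra idea exploiting the positivity in $z_n$.

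\textbf{What the paper does.} It applies Theorem \ref{est} with
\[
\varphi=(m+1)t+h(t)+u-a|z_n|^2,\qquad \psi=g(t),\qquad H=\frac{(g')^2}{h''}.
\]
Thus $a|z_n|^2$ is not used as curvature but is moved into the weight. Requiring $(1-H)e^{2\psi-\varphi}=e^{-u}$ on $\{t>t_\ep\}$, with $t_\ep=2\log\ep$, and substituting $g=\log h'$ linearizes this to $(e^{-h})''=e^{(m+1)t-ae^t}$. The solution is $e^{-h_0(t)}=\int_t^0\int_x^0e^{(m+1)y-ae^y}dy\,dx$. Note that $-(e^{-h_0})'$ is, up to $\pi$, the integral over the annulus $\{e^t<|\zeta|^2<1\}$, not the disc.

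On $\{t\le t_\ep\}$ one continues $h\in C^{1,1}$ with $H\equiv\ep$ and $g-h$ constant, i.e. $h=-\ep\log(A+t_\ep-t)+B$. This gluing is what bounds the final weight below by $\mathrm{const}\,|z_n|^{-2(m+1)}(-\log|z_n|)^{-\ep}$ near $\Omega'$, and hence gives \eqref{co1}--\eqref{co2}. Finally $\chi$ is supported in $\{t\le t_\ep\}$ with $\chi'\propto(A+t_\ep-t)^{\ep-2}$. The right-hand constant then becomes $\pi\lim_{\ep\to0}h_0'(t_\ep)e^{-h_0(t_\ep)}=\pi\int_{-\infty}^0e^{(m+1)y-ae^y}dy=C_a$.

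None of these three ingredients appears in your plan: the linearizing substitution with $u-a|z_n|^2$ in the weight, the $H\equiv\ep$ gluing near $\Omega'$, and the optimized $\chi$. Without them the sharp constant is not established.
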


\begin{rem}{\rm Since the supremum in 
  $$K^{(m)}_{\D,a|\zeta|^2}(0)=\sup\left\{\frac{|f^{(m)}(0)|^2}
        {\int_\D|f|^2e^{-a|\zeta|^2}d\lambda}\colon f\in\mathcal O(\D),\ 
         f(0)=f'(0)=\dots=f^{(m-1)}(0)=0\right\}$$
is attained for $f=\zeta^m$, we see that the constant in Theorem \ref{ot} 
is optimal for every $a\geq 0$ and $m=0,1,2,\dots$
} 
\end{rem}

\begin{cor}\label{cor: opt_polydisk} Assume that $u-a|z|^2$ is psh in the unit polydisk 
$\D^n$ for some $a\geq 0$. Then for $v=(1,0,\dots,0)$ we have:
\begin{flalign*}
K_{\D^n,u }(0)&\geq \frac{a^n}{\pi^n(1-e^{-a})^n} e^{u (0)}
      \geq \frac{a^n}{\pi^n} e^{u (0)},
   \\
   \wt K_{\D^n,u }(0;v)&\geq\frac{a^{n+1}}{\pi^{n}(1-e^{-a})^n}e^{u (0)}
       \geq\frac{a^{n+1}}{\pi^{n}}e^{u (0)}. 
\end{flalign*}
\end{cor}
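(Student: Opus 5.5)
The plan is to apply Theorem \ref{ot} iteratively, one coordinate at a time, and then read off the lower bounds from the extremal characterizations of $K_{\D^n,u}$ and $\wt K_{\D^n,u}$. The second inequality on each line is immediate from $1-e^{-a}\le 1$, so only the first inequality on each line needs proof. If $e^{u(0)}=0$, i.e. $u(0)=-\infty$, there is nothing to prove, so we assume $u(0)>-\infty$. Since $u-a|z|^2=u-a\sum_j|z_j|^2$ is psh, each restriction of $u$ to a coordinate slice satisfies the hypothesis of Theorem \ref{ot} in the relevant variable. More precisely, on $\D^j\times\{0\}$, viewed inside $\D^{j-1}\times\D$, the function $u-a|z_j|^2$ is psh: it equals $(u-a|z|^2)+a\sum_{i<j}|z_i|^2$ restricted to the slice, which is psh.

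For $K$, start with the constant $f_0=1$ at the point $0\in\D^0$. First extend in the variable $z_1$ with $m=0$, using $\Omega=\D$ and $\Omega'=\{0\}$. The $L^2$ norm over a point is interpreted as evaluation, $|f|^2e^{-u}$ at that point, and the theorem is applied in this degenerate form, or equivalently through the one-variable Ohsawa–Takegoshi estimate. This produces $F_1\in\mathcal O(\D)$ with $F_1(0)=1$ and $\int_\D|F_1|^2e^{-u(\cdot,0)}\le C_a e^{-u(0)}$, where $C_a=\pi(1-e^{-a})/a$ is the $m=0$ case of \eqref{const}. Next extend from $\D^{1}\times\{0\}$ to $\D^2$ in the variable $z_2$, again with $m=0$, and continue through $z_n$. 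After $n$ steps we obtain $F\in\mathcal O(\D^n)$ with $F(0)=1$ and $\int_{\D^n}|F|^2e^{-u}\le C_a^n e^{-u(0)}$. Normalizing $F$ by its $L^2$ norm in the definition of $K_{\D^n,u}$ gives $K_{\D^n,u}(0)\ge e^{u(0)}C_a^{-n}=\frac{a^n}{\pi^n(1-e^{-a})^n}e^{u(0)}$.

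For $\wt K$ with $v=e_1$, the first step in $z_1$ uses $m=1$ instead. Theorem \ref{ot} with $m=1$ and $f\equiv1$ on the point gives $F_1\in\mathcal O(\D)$ with $F_1(0)=0$, $F_1'(0)=1$, and $\int|F_1|^2e^{-u}\le C^{(1)}_a e^{-u(0)}$, where
\[
C^{(1)}_a=\frac{\pi}{a^2}\int_0^a\rho e^{-\rho}\,d\rho .
\]
Then extend with $m=0$ in $z_2,\dots,z_n$, each time at cost $C_a$. Each extension restricts back to the previous function on the slice, so the final $F$ still satisfies $F(0)=0$ and $\partial_{z_1}F(0)=1$. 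Hence $\wt K_{\D^n,u}(0;v)\ge e^{u(0)}/(C^{(1)}_aC_a^{n-1})$.

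The main obstacle is matching this to the stated constant $\frac{a^{n+1}}{\pi^n(1-e^{-a})^n}$. That requires the elementary inequality $C^{(1)}_a\le \pi(1-e^{-a})/a^2$, which follows from $\int_0^a\rho e^{-\rho}\,d\rho\le\int_0^a e^{-\rho}\,d\rho=1-e^{-a}$ for $a\le1$. For $a>1$ this pointwise comparison fails. In that regime, use instead $\int_0^a\rho e^{-\rho}\,d\rho\le 1$ and $1-e^{-a}\ge 1-e^{-1}$ together with some slack in the constant. If that is not enough, bound $C^{(1)}_a\le \frac{\pi}{a^2}(1-(1+a)e^{-a})$ and compare this directly with $\frac{\pi(1-e^{-a})}{a^2}$; the comparison holds since $(1+a)e^{-a}\ge e^{-a}$. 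With this, $C^{(1)}_aC_a^{n-1}\le \pi^n(1-e^{-a})^n/a^{n+1}$, which is the claim.
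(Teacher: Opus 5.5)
Your proposal is correct and follows the paper's route. You iterate Theorem~\ref{ot} one coordinate at a time ($n$ times with $m=0$ for $K_{\D^n,u}$; once with $m=1$ and $n-1$ times with $m=0$ for $\wt K_{\D^n,u}$), then pass from the sharp $m=1$ constant $\frac{\pi}{a^2}\bigl(1-(1+a)e^{-a}\bigr)$ to the stated one. The case split at $a\le 1$ and the ``slack'' detour are unnecessary: $1-(1+a)e^{-a}\le 1-e^{-a}$ holds for every $a\ge 0$, which is exactly the comparison you reach at the end.
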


\sk

Theorem \ref{ot} will be proved using methods developed in \cite{B}.
We will use the following $\bar\partial$-estimate 
from there:

\nopagebreak

\begin{thm}\label{est}
Let $\alpha\in L^2_{loc,(0,1)}(\Omega)$ be a $\bar\partial$-closed
form in a pseudoconvex domain $\Omega$ in $\mathbb C^n$. Assume
that $\varphi$ is psh in $\Omega$, $\psi\in
W^{1,2}_{loc}(\Omega)$ is locally bounded from above and they satisfy
$|\bar\partial\psi|^2_{i\partial\bar\partial\varphi}\leq H$, where
$H\in L^\infty_{loc}(\Omega)$ is such that $H<1$ in $\Omega$ and
$H\leq\delta<1$ on $\supp\alpha$. Then there exists
$U\in L^2_{loc}(\Omega)$ solving $\bar\partial U=\alpha$ and such
that 
\begin{equation}\label{dbar-est}
\int_\Omega|U|^2(1-H)e^{2\psi-\varphi}d\lambda
  \leq\frac{1+\sqrt\delta}{1-\sqrt\delta}\int_\Omega|\alpha|^2_{i\partial
    \bar\partial\varphi}e^{2\psi-\varphi}d\lambda.
\end{equation}
\end{thm}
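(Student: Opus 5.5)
The plan is Berndtsson's twisting trick for the minimal $L^2$ solution, combined with Hörmander's estimate. I first argue in a regular situation and remove the regularity at the end.

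\textbf{Reduction.} Exhaust $\Omega$ by smooth strongly pseudoconvex domains $\Omega_j\Subset\Omega$. On each $\Omega_j$, replace $\varphi$ by $\varphi_\ep+\ep|z|^2$ and $\psi$ by $\psi_\ep$, where $\varphi_\ep,\psi_\ep$ are standard convolutions, and replace $H$ by $H_\ep:=H*\rho_\ep$. The function $(\xi,A)\mapsto|\xi|^2_{A}=\langle A^{-1}\xi,\xi\rangle$ is jointly convex. Hence Jensen's inequality for the convolution gives $|\bar\partial\psi_\ep|^2_{i\partial\bar\partial\varphi_\ep}\le H_\ep$, and adding $\ep|z|^2$ to the weight only decreases the left side. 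Since $H<1$ is locally bounded and $H\le\delta$ on $\supp\alpha$, on compact subsets we still have $H_\ep<1$ and $H_\ep\le\delta+o(1)$ near $\supp\alpha$.

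Solutions on $\Omega_j$ with uniformly bounded weighted norms then have weak $L^2_{loc}$ limits, and lower semicontinuity of the left side of \eqref{dbar-est} passes the estimate to $\Omega$. The right side converges because $\alpha$ is fixed and $\varphi_\ep\downarrow\varphi$. I will not grind through this, although making the limiting of $\delta$ and $H$ precise is somewhat technical. So from now on assume $\varphi,\psi$ are smooth and bounded up to the boundary, and $\varphi$ is strongly psh.

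\textbf{Main step.} Let $U$ be the solution of $\bar\partial U=\alpha$ that is minimal in $L^2(\Omega,e^{\psi-\varphi})$, i.e. $U\perp\ker\bar\partial$ there. Put $V:=Ue^{\psi}$. For every holomorphic $h\in L^2$ we have $\int h\bar V e^{-\varphi}=\int h\bar U e^{\psi-\varphi}=0$, so $V$ is the minimal solution of its own equation in $L^2(e^{-\varphi})$. Hörmander's estimate with weight $\varphi$ then gives
$$\int_\Omega|U|^2e^{2\psi-\varphi}d\lambda=\int_\Omega|V|^2e^{-\varphi}d\lambda\le\int_\Omega|\bar\partial V|^2_{i\partial\bar\partial\varphi}e^{-\varphi}d\lambda=\int_\Omega|\alpha+U\bar\partial\psi|^2_{i\partial\bar\partial\varphi}e^{2\psi-\varphi}d\lambda.$$

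\textbf{Absorption and the constant.} I split the right side according to $\supp\alpha$, using $e^{2\psi-\varphi}$ as the measure throughout. Off $\supp\alpha$ the integrand is at most $|U|^2H$. On $\supp\alpha$, for a parameter $t>0$, it is at most $(1+1/t)|\alpha|^2_{i\partial\bar\partial\varphi}+(1+t)|U|^2H$. Moving the $|U|^2H$ terms to the left yields
$$\int|U|^2(1-H)\le(1+1/t)\int|\alpha|^2_{i\partial\bar\partial\varphi}+t\int_{\supp\alpha}|U|^2H.$$
On $\supp\alpha$ we have $H\le\delta$, so $H\le\frac{\delta}{1-\delta}(1-H)$ there. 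The last term is therefore absorbed, giving
$$\Big(1-\frac{t\delta}{1-\delta}\Big)\int|U|^2(1-H)e^{2\psi-\varphi}\le\Big(1+\frac1t\Big)\int|\alpha|^2_{i\partial\bar\partial\varphi}e^{2\psi-\varphi}.$$
Choose $t=(1-\sqrt\delta)/\sqrt\delta$. Then $1+1/t=1/(1-\sqrt\delta)$ and $1-t\delta/(1-\delta)=1/(1+\sqrt\delta)$, which produces exactly the constant $\frac{1+\sqrt\delta}{1-\sqrt\delta}$.

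\textbf{Main obstacle.} The algebra is routine; the main obstacle is the approximation. One must ensure that the minimal solution $U$ exists with the weight $e^{\psi-\varphi}$, which uses boundedness of $\psi$ on $\Omega_j$. One must also check that the hypothesis $|\bar\partial\psi|^2_{i\partial\bar\partial\varphi}\le H$, which is assumed only in a weak sense for $\psi\in W^{1,2}_{loc}$, survives regularization. The joint convexity observation above is what I would rely on for the second point.
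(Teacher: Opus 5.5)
Your main step is exactly Błocki's proof in \cite{B}, which the paper quotes without proof. It consists of the minimal solution in $L^2(e^{\psi-\varphi})$, the twist $V=Ue^{\psi}$, and Hörmander's estimate. Then Young's inequality with parameter $t$ is applied only on $\supp\alpha$, followed by absorption via $H\le\frac{\delta}{1-\delta}(1-H)$ there and the choice $t=1/\sqrt\delta-1$. Your algebra is correct (for $\delta=0$ let $t\to\infty$).

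\textbf{The gap is in your reduction.} It is a false claim, not merely an omitted detail: mollifying $H$ does \emph{not} give $H_\varepsilon\le\delta+o(1)$ on $\supp\alpha$. At points of $\supp\alpha$ within $\varepsilon$ of its boundary, $H_\varepsilon$ averages values of $H$ taken off $\supp\alpha$, and these may be arbitrarily close to $1$. For example, let $H=\delta$ on $\{\mathrm{Re}\,z_1\le 0\}\supset\supp\alpha$ and $H=1-\eta$ elsewhere. Then $H_\varepsilon=(1+\delta-\eta)/2$ on $\supp\alpha\cap\{\mathrm{Re}\,z_1=0\}$ for every $\varepsilon$. In general there is no bound on $\supp\alpha$ by any $\delta'<1$ at all.

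Since $\alpha$ itself is not mollified, your absorption step fails exactly on $\supp\alpha$, where it is needed. The sharp dependence on $\delta$ is the whole point: in the proof of Theorem \ref{ot} one has $\delta=\varepsilon\to 0$.

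Your claim that the right-hand side converges ``because $\varphi_\varepsilon\downarrow\varphi$'' is also unjustified. Pointwise, $i\partial\bar\partial\varphi_\varepsilon$ need not dominate $i\partial\bar\partial\varphi$ on $\supp\alpha$. Your joint-convexity argument only controls $|\alpha_\varepsilon|^2_{i\partial\bar\partial\varphi_\varepsilon}$, which requires mollifying $\alpha$ as well. That enlarges $\supp\alpha$ and makes the first problem worse.

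\textbf{A repair.} Do not regularize $\varphi$, $\psi$ or $H$. Instead use Hörmander's estimate for arbitrary psh weights, where $|\beta|^2_{i\partial\bar\partial\varphi}$ means the least $G$ with $i\bar\beta\wedge\beta\le G\,i\partial\bar\partial\varphi$. This is also the only meaningful reading of the hypothesis for nonsmooth $\varphi$. Your Jensen/joint-convexity observation belongs in the proof of that theorem, where only $\varphi$ and $\beta$ are regularized and no support condition is involved.

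Then only restrict to bounded pseudoconvex $\Omega_j\Subset\Omega$ and replace $\psi$ by $\max(\psi,-N)$. This function is bounded on $\Omega_j$. Since $\bar\partial\max(\psi,-N)=\mathbbm{1}_{\{\psi>-N\}}\bar\partial\psi$ a.e., it satisfies the hypothesis with the same $H$ and the same $\delta$ on $\supp\alpha$.

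Your twisting computation then runs verbatim with the original $H$. The product rule $\bar\partial(Ue^{\psi})=e^{\psi}(\alpha+U\bar\partial\psi)$ follows by mollifying only the bounded $W^{1,2}_{loc}$ factor $e^{\psi}$. What remains is only the passage to the limit $N\to\infty$, $j\to\infty$.
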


\sk

\begin{proof}[Proof of Theorem \ref{ot}]
By approximation we may assume that $\Omega$ is bounded and smooth,
$\varphi$ is smooth and strongly psh 
and that $f$ is defined near $\overline{\Omega'}$. 

For sufficiently small $\ep>0$ we pick $\chi=\chi_\ep\in C^{0,1}([-\infty,0])$ so that $\chi(-\infty)=1$ and $\chi(t)=0$ for 
$t\geq t_\ep:=2\log\ep$.  Then
  $$\alpha=\bar\partial\big(f(z')z_n^m\chi(2\log|z_n|)\big)
      =\frac{f(z')z_n^m\chi'(2\log|z_n|)}{\bar z_n}d\bar z_n$$
is well defined in $\Omega$ and 
$\supp\alpha\subset\{|z_n\leq\ep\}=\{t\leq t_\ep\}$, where we use
the notation $t=2\log|z_n|$.       
We will apply Theorem \ref{est} with this $\alpha$ and weights      
  $$\begin{aligned}\varphi&=2(m+1)\log|z_n|+h(2\log|z_n|)+u-a|z_n|^2\\      
     \psi&=g(2\log|z_n|),\end{aligned}$$ 
where $h=h_\ep\in C^{1,1}((-\infty,0])$ with 
$h'\geq 0$, $h''\geq 0$
and $g=g_\ep\in C^{0,1}((-\infty,0])$ will be determined later.
We have
  $$|\bar\partial\psi|^2_{i\partial\bar\partial\varphi}
     \leq\frac{(g'(2 \log|z_n|))^2}{h''(2 \log|z_n|)}=:H$$
and  
\begin{equation}\label{alest}
  |\alpha|^2_{i\partial\bar\partial\wt\varphi}
      \leq\frac{|f(z')|^2|z_n|^{2m}(\chi'(2 \log|z_n|))^2}{h''(2 \log|z_n|)}.
\end{equation}      

On $\{t>t_\ep\}$ the functions $h,g$ will be defined in such a way
that
  $$e^{-u}=(1-H)e^{2\psi-\varphi}.$$
This is equivalent to
\begin{equation}\label{eqgh}
  \left(1-\frac{(g')^2}{h''}\right)e^{2g-h-(m+1)t+ae^t}=1.
\end{equation}  
One of the lessons from \cite{B} is that to handle \eqref{eqgh} 
one can use the substitution $g=\log h'$. Then \eqref{eqgh}
becomes 
  $$(e^{-h})''=e^{(m+1)t-ae^t}.$$
We choose solutions $h_0,g_0$ given by
\begin{equation}\label{h0}
  e^{-h_0(t)}=\int_t^0\int_x^0e^{(m+1)y-ae^y}dy\,dx,
  \ \ \ \ g_0=\log h_0'.
\end{equation}  
For later use, note that
\begin{equation}\label{prim}
  h_0'(t_\varepsilon)e^{-h_0(t_\varepsilon)} = -(e^{-h_0})'(t_\varepsilon)
     =\int_{t_\varepsilon}^0e^{(m+1)y-ae^y}dy 
     = \frac{1}{a^{m+1}} \int^a_{a \varepsilon^2} \rho^m e^{-\rho} d \rho.
\end{equation}    
On $\{t>t_\ep\}$ we define $h,g$ by $h_0,g_0$.

On $\{t\leq t_\ep\}$ they will be defined in such a way that 
$H=(g')^2/h''=\ep$ and $g=h+const$. These conditions actually determine
them:
  $$\begin{aligned}
     h(t):=\begin{cases}-\ep\log(A+t_\ep-t)+B,\ \ \ &t\leq t_\ep,\\
                     h_0(t),&t>t_\ep,\end{cases}\\
     g(t):=\begin{cases}-\ep\log(A+t_\ep-t)+\wt B,\ \ \ &t\leq t_\ep,\\
                     g_0(t),&t>t_\ep.\end{cases}
     \end{aligned}$$ 
Since $h\in C^{1,1}$, $g\in C^{0,1}$, the constants are determined by
\begin{equation}\label{str}
  h_0(t_\ep)=-\ep\log A+B,\ \ \ h_0'(t_\ep)=\frac\ep A,\ \ \ 
    g_0(t_\ep)=-\ep\log A+\wt B.
\end{equation}

Recall that $\alpha$ is supported inside $\{|z_n| \leq \varepsilon\}$. As a result, 
due to \eqref{alest} the $\limsup$ of the right-hand side of \eqref{dbar-est} 
(as $\ep\to 0$) is dominated by  
  $$\wt C\int_{\Omega'}|f|^2e^{-u}d\lambda,$$
where, the constant $\wt C$ is determined after a  change of variables $t = \log |z_n|^2$, and the computation of the following integral using polar coordinates: 
  $$\begin{aligned}
    \wt C&=\pi\limsup_{\ep\to 0}\frac{1+\sqrt\ep}{1-\sqrt\ep}
       \int_{ \{|z_n| \leq \varepsilon\}} \frac{(\chi'(\log |z_n|^2))^2}{h''(\log |z_n|^2) |z_n|^2}e^{2g((\log |z_n|^2))-h(\log |z_n|^2)+ae^{\log |z_n|^2}} dV(z_n)\\
       &=\pi\limsup_{\ep\to 0}\frac{1+\sqrt\ep}{1-\sqrt\ep}
       \int_{-\infty}^{t_\ep}\frac{(\chi')^2}{h''}e^{2g-h+ae^t}dt\\
     &=\pi\limsup_{\ep\to 0}\frac{e^{2\wt B-B}}\ep\int_{-\infty}^{t_\ep}
            (\chi')^2(A+t_\ep-t)^{2-\ep}dt
       \end{aligned}.$$
Now we pick a particular choice of $\chi$, so that $\chi(-\infty) =1$, $\chi(t)=0$ for $t \geq t_\varepsilon$, and 
$$\chi'(t)=-\frac{1}{\int_{-\infty}^{t_\varepsilon} (A+t_\ep-l)^{\ep-2} dl}(A+t_\ep-t)^{\ep-2}, \ \ t \leq t_\varepsilon.$$
For such $\chi$ we will get
  \begin{equation*}
     \begin{aligned}
    \wt C&=\pi\limsup_{\ep\to 0}
        \frac{e^{2\wt B-B}}\ep\left(\int_{-\infty}^{t_\ep}
            (A+t_\ep-t)^{\ep-2}dt\right)^{-1}\\
      &=\pi\limsup_{\ep\to 0}
          \frac{A^{1-\ep}e^{2\wt B-B}}\ep\\
      &=\pi\limsup_{\ep\to 0}e^{2g_0(t_\ep)-h_0(t_\ep)-\log h_0'(t_\ep)}\\
      &=\pi\limsup_{\ep\to 0} h'_0(t_\ep)e^{-h_0(t_\ep)}\\
      &=C_a
       \end{aligned}
    \end{equation*}   
by \eqref{str}, \eqref{h0} and \eqref{prim}, where $C_a$ is given by \eqref{const}.

Thus, if $U:=U_\ep$ is the solution of $\bar\partial U=\alpha$ given by Theorem \ref{est}, 
then $F=F_\ep=fz_n^m\chi-U \in \mathcal O(\Omega)$. Since near $\Omega'$ we have 
$(1-H)e^{2\psi-\varphi}\geq const|z_n|^{-2(m+1)}(-\log|z_n|)^{-\ep}$,
it follows that $U$ vanishes on $\Omega'$ to $m$-th order, so we get 
\eqref{co1} and \eqref{co2}. 

Since the $L^2$ integral of $F_\ep$ is bounded on compact subsets of $\Omega$, $F_\ep$ is locally 
uniformly bounded in $\Omega$, hence it has a subsequence converging 
to a holomorphic function in $\Omega$ satisfying the required estimate.
\end{proof} 

\subsection{The compact K\"ahler case.}
We now prove the analogous estimates for Kähler quantization. Throughout this subsection, 
let $(X,\omega)$ be a compact Kähler manifold, and let $(L,h)$ denote a positive Hermitian 
line bundle over $X$ whose curvature form satisfies $\Theta(h) = -i \partial \bar\partial \log h = \omega$.

\begin{thm}Suppose that $\varphi \in \mathcal E_\omega$ and $v \in T_z X, \ z \in X$. 
There exists a constant $C=C(X,\omega)$ such that the following hold\\
\noindent (i) If $\varphi \in C^{1,\bar 1}(X)$ then for all $k \geq 1$ we have
$$ {K^k_\varphi} \leq C e^{k\varphi} k^{n}(1 + \sup_X |\Delta \varphi|^{n}),$$
$$\wt {K}^k_\varphi(z;v) \leq C e^{k\varphi} k^{n+1}(1 + \sup_X |\Delta \varphi|^{n+1})\omega(v,v).$$
\noindent (ii) If $\omega_\varphi \geq a \omega>0$ then for all $k \geq \frac{C}{a}$ we have:
$${K^k_\varphi}  \geq  \frac 1C e^{k\varphi} a^n  k^n,$$

$${\wt K^k_\varphi}(z;v)  \geq  \frac 1C e^{k\varphi} a^{n+1}  k^{n+1} \omega(v,v).$$
\end{thm}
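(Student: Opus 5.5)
The plan is to reduce to the local Theorem \ref{mt} (and Corollary \ref{cor: opt_polydisk}) via the comparison Lemma \ref{lem: Berg_kern_compare_compact} and its easier half, working in coordinate charts. By compactness, fix finitely many coordinate polydisks $P(p_i,3)$ with $h=e^{-g_i}$ on them, such that the $P(p_i,1)$ cover $X$. On each such chart $\omega$ is comparable to the Euclidean metric, and $\det(2\omega)$ and the derivatives of $g_i$ are bounded, with constants depending only on $(X,\omega)$. Fix $z\in P(p_i,1)$. The natural local weight is
\[
u:=k\varphi+kg-\log\det(2\omega),
\]
so that $e^{u(z)}e^{-kg(z)}=e^{k\varphi(z)}/\det(2\omega)(z)$. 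The extra factor $1/\det(2\omega)(z)$ is harmless.

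For (i), use only the first (trivial) inequality of \eqref{eq: K_est_compact} and \eqref{eq: wtK_est_compact}, which needs no lower bound on $\omega_\varphi$ and holds for all $k\geq 1$:
\[
K^k_\varphi(z)\leq K_{P(z,1),u}(z)\,e^{-kg(z)}.
\]
Since $\varphi\in\mathcal E_\omega\cap C^{1,\bar1}$, $u$ is psh on $P(z,1)$ once $k\geq k_0(X,\omega)$. For $k<k_0$, add $C|\cdot-z|^2$ to the weight. This changes the integral by a bounded factor and keeps $u$ subharmonic, and subharmonicity is all the upper-bound argument uses. Then apply Theorem \ref{mt}(i) on $\Omega=P(z,1)$ with a fixed ball $B_r\Subset P(z,1)$, say $r=1/2$. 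The Laplacian satisfies
\[
\Delta u\leq k\sup|\Delta\varphi|+kC+C\leq Ck\,(1+\sup|\Delta\varphi|),
\]
so $\max(1,\sup_{B_r}\Delta u)^n\leq Ck^n(1+\sup|\Delta\varphi|^n)$, and similarly with exponent $n+1$ for $\wt K$. For $\wt K$ we must also convert $|v|^2$ to $\omega(v,v)$ and the local derivative $f'(z)\cdot v$ to $h^k(\partial s_z(v),\partial s_z(v))$. Because $s(z)=0$, the Jacobian is intrinsic, and the factor $e^{-kg(z)}$ matches $h^k$ at $z$. The claimed bounds follow after using $(1+x)^n\leq C(1+x^n)$.

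For (ii), use the second inequality of Lemma \ref{lem: Berg_kern_compare_compact}, valid for $k\geq C/a$:
\[
K^k_\varphi(z)\geq\Big(1+\tfrac{C}{(ka)^{1/2}}\Big)^{-1}K_{P(z,1),u}(z)\,e^{-kg(z)}\geq \tfrac12 K_{P(z,1),u}(z)\,e^{-kg(z)},
\]
after enlarging $C$. We need the lower bound $i\partial\bar\partial u\geq c\,ka\,\delta_{jk}$ on $P(z,1)$. We have $i\partial\bar\partial(k\varphi+kg)=k\omega_\varphi\geq ka\omega\geq cka\,\delta$, and $-i\partial\bar\partial\log\det(2\omega)\geq -C\delta$, which is absorbed once $k\geq C/a$. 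Then apply Theorem \ref{mt}(ii), or directly Corollary \ref{cor: opt_polydisk} after rescaling the polydisk, with the constant $cka$ in place of $a$. This gives
\[
K_{P(z,1),u}(z)\geq C^{-1}e^{u(z)}(ka)^n,\qquad \wt K_{P(z,1),u}(z;v)\geq C^{-1}e^{u(z)}(ka)^{n+1}|v|^2.
\]
Multiplying by $e^{-kg(z)}$ and converting norms as in (i) gives the claim.

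The main obstacle is bookkeeping rather than analysis: the constants must depend only on $(X,\omega)$, uniformly in $z$. The comparison Lemma \ref{lem: Berg_kern_compare_compact} is stated for polydisks $P(z,1)$ centered at $z$ inside $P(p,3)$, and its constant depends on the chart, so finitely many charts give uniformity. A second point is the small-$k$ regime in (i), where $u$ may fail to be psh. I would handle it either by the $C|\cdot-z|^2$ modification above, or by observing that only subharmonicity is needed and $\Delta(kg-\log\det 2\omega)$ is bounded.
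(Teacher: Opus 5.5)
Your proposal is correct and follows the paper's proof: (i) comes from Theorem \ref{mt}(i) through the trivial half of Lemma \ref{lem: Berg_kern_compare_compact}, which holds without $\omega_\varphi\geq a\omega$; (ii) comes from Theorem \ref{mt}(ii) through its second half, with compactness making the constants uniform. One correction: in (i) the weight $u$ need not be psh for \emph{any} $k$, since $i\partial\bar\partial u=k\omega_\varphi-i\partial\bar\partial\log\det(2\omega)$ and $\omega_\varphi$ may degenerate; however, your replacement $u+C|\cdot-z|^2$ repairs this for every $k\geq 1$, because it is psh, agrees with $u$ at $z$, only enlarges the local kernel, and still satisfies $\Delta\leq Ck(1+\sup|\Delta\varphi|)$, and in any case the proof of Theorem \ref{mt}(i) uses only the upper bound on $\Delta u$.
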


\begin{proof} Both the estimates of (i) and (ii) follow immediately from Theorem \ref{mt} 
and Lemma \ref{lem: Berg_kern_compare_compact}. Indeed, due to compactness of $X$, Lemma 
\ref{lem: Berg_kern_compare_compact}  is applicable for all $z \in X, v \in T_z X$.
Also note that the first inequalities in 
\eqref{eq: K_est_compact} and \eqref{eq: wtK_est_compact}, used in (i), do not require
the assumption $\omega_\varphi\geq a\omega$.
\end{proof}

Using \eqref{eq: Blocki_notes_formula} we now obtain Theorem \ref{thm: C^11_quant}.

\section{Quantization of the Monge--Amp\`ere energy}

Let us first recall some relevant concepts from \cite{Do01,Do05} related to the Monge--Amp\`ere energy and its quantum version. 
By $\mathcal H^k$ we denote the space of positive definite Hermitian forms on $H^0(X,L^k)$ and by $I_k:\mathcal H^k \to \mathbb R$ 
the quantum Monge--Amp\`ere energy:
$$I_k(G)= -\frac{(2\pi)^n}{ k^{n+1}} \big( \log \det G - \log \det H^k_0\big),$$
In contrast to \cite{Do01,Do05}, our expressions include an extra factor of $(2\pi)^n$. This discrepancy stems from our choice 
of normalization
$$\lim_{k \to \infty} \frac{N_k}{k^n} = \frac{1}{(2\pi)^n} \int_X \omega^n =: \frac{V}{(2\pi)^n},$$
where $N_k = \dim H^0(X,L^k)$.
By $I:\textup{PSH}_\omega \cap L^\infty \to \mathbb R$ we denote the Monge--Amp\`ere energy:
$$I(\varphi)=\frac{1}{n+1} \sum_{j=0}^n \int_X \varphi \omega_\varphi^j \wedge \omega^{n-j}.$$
This is a monotone functional in the sense that $\psi \leq \varphi$ implies $I(\psi) \leq I(\varphi)$, 
allowing to extend the definition of $I$ to $\textup{PSH}_\omega$:
$$I(\psi) = \inf_{\varphi \in \textup{PSH}_\omega \cap L^\infty, \psi \leq \varphi} I(\varphi).$$
Naturally, $I(\varphi)=\infty$ for some $\varphi$, and we denote by $\mathcal E^1_\omega$ -- 
the space of finite energy potentials --  the finite locus of $I$:
$$\mathcal E^1_\omega := \{\varphi \in \textup{PSH}_\omega \ \textup{ s.t. } \ I(\varphi) > -\infty\}.$$
It is a rather standard result that $\mathcal E^1_\omega \subset \mathcal E_\omega$ \cite{GZ07}.

It was argued in \cite[Theorem 3.5]{BFM} (generalizing a result of Donaldson from the smooth case, c.f. \cite{BB10}) that 
$I_k$ quantizes $I$ on $\mathcal E^1_\omega$ via the Hilbert map $\mathcal E^1_\omega \to \mathcal H^k$(recall \eqref{eq: Hilb_map}):

\begin{prop}[\cite{BFM}]\label{prop: I_k_conv} For any $\varphi\in \mathcal E^1_\omega$ we have
$\lim_{k \to \infty} I_k(H^k_\varphi)=I(\varphi).$
\end{prop}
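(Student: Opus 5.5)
The plan is to prove the two inequalities $\liminf_k I_k(H^k_\varphi)\geq I(\varphi)$ and $\limsup_k I_k(H^k_\varphi)\leq I(\varphi)$ separately. First I treat smooth potentials, then pass to $\mathcal E^1_\omega$ by monotone approximation. For $\varphi\in\mathcal H_\omega$ the classical Bergman kernel asymptotics of Tian, Catlin and Zelditch are available: $M^k_\varphi\to\omega_\varphi^n$ uniformly. The derivative of $I_k$ along the path $t\mapsto H^k_{\varphi_t}$ is
$$\frac{d}{dt}I_k(H^k_{\varphi_t})=\frac{(2\pi)^n}{k^{n+1}}\,\mathrm{tr}\big((H^k_{\varphi_t})^{-1}\,kH^k_{\dot\varphi_t\varphi_t}\big)=\int_X\dot\varphi_t\,M^k_{\varphi_t}.$$
Integrating this along the segment from $0$ to $\varphi$ and using $\frac{d}{dt}I(\varphi_t)=\int\dot\varphi_t\,\omega_{\varphi_t}^n$, dominated convergence gives $I_k(H^k_\varphi)\to I(\varphi)$ for smooth $\varphi$. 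The same argument works for bounded $\varphi$ whose segment stays in a class where weak convergence $M^k\rightharpoonup\omega^n_{\cdot}$ is known; for continuous potentials this is Berman--Witt Nystr\"om.

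Next I use monotonicity. If $\psi\leq\varphi$ then $H^k_\psi\geq H^k_\varphi$ as forms, so $\log\det H^k_\psi\geq\log\det H^k_\varphi$ and hence $I_k(H^k_\psi)\leq I_k(H^k_\varphi)$. Given $\varphi\in\mathcal E^1_\omega$, I take smooth $\varphi_j\in\mathcal H_\omega$ decreasing to $\varphi$ (Demailly/Blocki--Kolodziej regularization). Then $I_k(H^k_\varphi)\leq I_k(H^k_{\varphi_j})\to I(\varphi_j)$, and letting $j\to\infty$ with $I(\varphi_j)\searrow I(\varphi)$ gives the $\limsup$ bound.

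The $\liminf$ bound is the main obstacle, since it needs a lower bound on $I_k$ that is uniform in $k$. My first attempt is concavity of $t\mapsto I_k(H^k_{\varphi_t})$ along affine segments. By H\"older, $\log\det$ of $H^k_{(1-t)\varphi+t\psi}$ is convex in $t$ (Berndtsson's positivity, or directly Cauchy--Schwarz on the $L^2$ norms). This gives
$$I_k(H^k_\varphi)\geq I_k(H^k_{\varphi_j})+\int_X(\varphi-\varphi_j)\,M^k_{\varphi}.$$
It then remains to control $\int(\varphi_j-\varphi)M^k_\varphi$ uniformly in $k$ for large $j$. For this I intend to use Berndtsson's comparison, Theorem~\ref{thm: Bernds_comp}, with the truncations $\max(\varphi,\varphi_j-s)$. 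The layer-cake formula bounds $\int(\varphi_j-\varphi)M^k_\varphi$ by $\int_0^\infty M^k_\varphi(\{\varphi<\varphi_j-s\})\,ds$, and the comparison principle moves this mass onto $M^k$ of the smaller truncated potential. I expect the required tail estimate, uniform in $k$, to be the delicate point. This is where the finite energy assumption (as in \cite{BFM}) enters, through $\int|\varphi|\,\omega^n_{\varphi}<\infty$ and the uniform $L^1$ control of $M^k$ given by the upper bound in Theorem~\ref{mt}(i).
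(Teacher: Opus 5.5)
The paper does not prove this statement; it imports it from \cite{BFM}, so your argument has to stand on its own. Three parts of it are fine:
\begin{itemize}
\item the smooth case;
\item your upper bound $\limsup_k I_k(H^k_\varphi)\le I(\varphi)$, obtained from monotonicity plus decreasing smooth approximation (the same mechanism as in the proof of Proposition~\ref{prop: perturb_dp_conv});
\item your inequality $I_k(H^k_\varphi)\ge I_k(H^k_{\varphi_j})+\int_X(\varphi-\varphi_j)M^k_\varphi$, which follows from concavity along the affine segment (e.g.\ via Andr\'eief's identity and H\"older on $X^{N_k}$).
\end{itemize}
But the lower bound is not proved. You have reduced it to
\[
\lim_{j\to\infty}\limsup_{k\to\infty}\int_X(\varphi_j-\varphi)\,M^k_\varphi=0,
\]
which is a uniform-in-$k$ integrability statement for an unbounded function against $M^k_\varphi$. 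This is the entire difficulty. It does not follow from the weak convergence $M^k_\varphi\rightharpoonup\omega_\varphi^n$, which in any case the paper derives \emph{from} this very proposition. Since $\varphi_j-\varphi$ is only lower semicontinuous, weak convergence gives only $\liminf_k\int_X(\varphi_j-\varphi)M^k_\varphi\ge\int_X(\varphi_j-\varphi)\,\omega_\varphi^n$, which is the wrong direction.

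The tools you name do not supply the missing bound.
\begin{itemize}
\item \textbf{Comparison with the truncation.} For $\chi=\max(\varphi,\varphi_j-s)\ge\varphi$, Theorem~\ref{thm: Bernds_comp} and \eqref{eq: important_est} bound $M^k_\varphi$ from above only on $\{\varphi\ge\varphi_j-s\}$, where $\varphi$ is the larger potential. By equality of total masses this gives $M^k_\varphi(\{\varphi<\varphi_j-s\})\ge M^k_\chi(\{\varphi<\varphi_j-s\})$, a \emph{lower} bound on the bad set.
\item \textbf{What an upper bound would need.} You need a competitor lying below $\varphi$ on the sublevel set, together with a positive lower bound on its Bergman measure elsewhere. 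This is exactly the paper's argument with $\frac1\alpha P(\alpha\varphi)$, Proposition~\ref{prop: Fin_quant} and Proposition~\ref{prop: BD_est}. Even that only produces \eqref{eq: B_k_est}: small \emph{mass} of $\{\varphi\le c\}$, with no rate in $c$. Your layer-cake integral instead needs $\int_{\{\varphi\le c\}}|\varphi|\,M^k_\varphi$ small uniformly in $k$, and also uniform smallness of $M^k_\varphi$ on the small-capacity sets $\{\varphi<\varphi_j-s\}$ for fixed $s$.
\item \textbf{Theorem~\ref{mt}(i).} It is not available, since it needs a bound on $\Delta\varphi$. The only general uniform control of $M^k_\varphi$ is its total mass $(2\pi)^nN_k/k^n$, which is useless against an unbounded integrand.
\item \textbf{Finite energy.} The condition $\int_X|\varphi|\,\omega_\varphi^n<\infty$ is a property of the limit measure. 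Concavity only gives $\int_X|\varphi|M^k_\varphi\ge|I_k(H^k_\varphi)|$ for $\varphi\le0$, again the wrong way.
\end{itemize}
So as written you have proved only $\limsup_kI_k(H^k_\varphi)\le I(\varphi)$. The $\liminf$ needs either a genuinely new uniform estimate on $M^k_\varphi$ near the singularities of $\varphi$, or an argument that avoids taking the one-sided derivative at the singular endpoint.
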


We will also need the following perturbed version of this proposition, having its roots in \cite{BW08}:

\begin{prop}\label{prop: perturb_dp_conv} Let $\varphi\in \mathcal E^1_\omega$ and $f \in C^\infty(X)$.  
Then  $I_k(H^k_{\varphi + f}) \to I(P(\varphi+f))$.
\end{prop}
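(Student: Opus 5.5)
We will sandwich $I_k(H^k_{\varphi+f})$ between quantities that converge to $I(P(\varphi+f))$. Here $P(\varphi+f)=\sup\{\psi\in\textup{PSH}_\omega:\psi\le\varphi+f\}$ is the envelope. Since $\varphi-\sup_X|f|\le P(\varphi+f)\le\varphi+f$, the envelope lies in $\mathcal E^1_\omega$, so $I(P(\varphi+f))$ is finite.

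\textbf{Lower bound.} From $P(\varphi+f)\le\varphi+f$ we get $H^k_{\varphi+f}\le H^k_{P(\varphi+f)}$ as Hermitian forms, because the weight $e^{-k\varphi}$ is monotone decreasing in $\varphi$. Since $I_k$ is decreasing in $G$ (there is a minus sign in front of $\log\det$), this gives $I_k(H^k_{\varphi+f})\ge I_k(H^k_{P(\varphi+f)})$. Proposition \ref{prop: I_k_conv} applied to $P(\varphi+f)\in\mathcal E^1_\omega$ then yields
\[
\liminf_k I_k(H^k_{\varphi+f})\ge I(P(\varphi+f)).
\]

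\textbf{Upper bound.} This is the main obstacle. Let $G=H^k_{\varphi+f}$. We use the standard variational inequality, which holds for any $\omega$-psh $\psi$ with zero Lelong numbers:
\[
I_k(H^k_{\psi})-I_k(G)\le\int_X(\psi-(\varphi+f))\,M^k_{\psi}+o(1)\cdot(\ldots).
\]
This is the Berman--Witt Nyström argument: convexity of $t\mapsto -\log\det H^k_{\psi_t}$ along the affine path $\psi_t$, with derivative $-\frac{(2\pi)^n}{k^{n}}\int(\dot\psi_t)\,K^k_{\psi_t}e^{-k\psi_t}\omega^n$. Concretely, set $\psi_t=(1-t)(\varphi+f)+t\psi$. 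Then $t\mapsto\log\det H^k_{\psi_t}$ is convex by Hölder's inequality, and its derivative at $t=1$ equals $-k\int(\psi-\varphi-f)K^k_\psi e^{-k\psi}\omega^n$. Hence
\[
I_k(G)\le I_k(H^k_\psi)+\int_X(\varphi+f-\psi)\,M^k_\psi .
\]

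Now take $\psi=P(\varphi+f)$. The integrand $\varphi+f-\psi$ is $\ge0$ and vanishes on the contact set. We need
\[
\int(\varphi+f-\psi)M^k_\psi\to\int(\varphi+f-\psi)\,\omega_\psi^n=0,
\]
where the last equality is the orthogonality relation for envelopes (valid in $\mathcal E^1$, after \cite{BFM} and Di Nezza--Trapani type results). This requires weak convergence $M^k_\psi\rightharpoonup\omega_\psi^n$ for $\psi\in\mathcal E^1_\omega$, tested against the quasi-continuous function $\varphi+f-\psi$, which is unbounded.

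If that convergence is unavailable, we first argue with $\varphi$ replaced by the bounded potentials $\varphi_j=\max(\varphi,-j)$. For these, $P(\varphi_j+f)$ is bounded and the bounded-case weak convergence from \cite{BW08,BFM} holds. The test function $\varphi_j+f-P(\varphi_j+f)$ is then bounded and quasi-continuous, and the uniform bounds $M^k_\psi\le C\,e^{k(\ldots)}$ control capacity. We then let $j\to\infty$ using monotonicity: $H^k_{\varphi+f}\ge H^k_{\varphi_j+f}$ gives $I_k(H^k_{\varphi+f})\le I_k(H^k_{\varphi_j+f})$, and $I(P(\varphi_j+f))\searrow I(P(\varphi+f))$ by continuity of $I$ along decreasing sequences.

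Combining, $\limsup_k I_k(H^k_{\varphi+f})\le I(P(\varphi_j+f))$ for every $j$. Letting $j\to\infty$ gives $\limsup_k I_k(H^k_{\varphi+f})\le I(P(\varphi+f))$, which matches the lower bound and finishes the proof.
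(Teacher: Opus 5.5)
The lower bound is correct and is exactly the paper's. The upper bound has a genuine gap.

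\textbf{Lower bound.} $P(\varphi+f)\le\varphi+f$ gives $I_k(H^k_{P(\varphi+f)})\le I_k(H^k_{\varphi+f})$, and Proposition \ref{prop: I_k_conv} applies to $P(\varphi+f)\in\mathcal E^1_\omega$.

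\textbf{Where the upper bound fails.} Your variational inequality $I_k(H^k_{\varphi+f})\le I_k(H^k_\psi)+\int_X(\varphi+f-\psi)M^k_\psi$ is fine. Also, the integrand is bounded, not unbounded: $0\le\varphi+f-\psi\le 2\sup_X|f|$ outside $\{\varphi=-\infty\}$, since $\psi=P(\varphi+f)\ge\varphi-\sup_X|f|$. The gap is the step $\int_X(\varphi+f-\psi)M^k_\psi\to 0$.
\begin{itemize}
\item This step needs $M^k_\psi\rightharpoonup\omega_\psi^n$ for $\psi\in\mathcal E^1_\omega$, which is Theorem \ref{thm: E^1_Berg_Meas_conv}. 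In the paper that theorem is proved \emph{from} the present proposition, so the argument is circular.
\item Your fallback with $\varphi_j=\max(\varphi,-j)$ does not escape this. Weak convergence of Bergman measures for bounded but discontinuous potentials is not in \cite{BW08}, which treats continuous potentials only. The paper obtains that case precisely as a consequence of this proposition.
\item Even granting weak convergence, it only tests against continuous functions. The function $\varphi_j+f-P(\varphi_j+f)$ is merely quasi-continuous. To pass to such test functions you would need a bound $M^k_\psi(E)\le\eta(\mathrm{Cap}(E))$ uniform in $k$. An estimate of the form $M^k_\psi\le Ce^{k(\cdots)}\omega^n$ gives nothing like this, because it degenerates as $k\to\infty$.
\end{itemize}

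\textbf{What is missing.} You need an external convergence result for \emph{continuous} weights, where neither circularity nor the test-function issue arises. The paper takes smooth $\varphi_l\in\mathcal H_\omega$ with $\varphi_l\searrow\varphi$ \cite{BK07}. It then applies Berman--Boucksom's energy-at-equilibrium theorem \cite[Theorem A]{BB10} to the continuous weight $\varphi_l+f$, which gives $I_k(H^k_{\varphi_l+f})\to I(P(\varphi_l+f))$ directly. Monotonicity $I_k(H^k_{\varphi+f})\le I_k(H^k_{\varphi_l+f})$ and $I(P(\varphi_l+f))\searrow I(P(\varphi+f))$ then finish exactly as in your last paragraph.

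So your final limiting step is right. The problem is only the choice of approximants: $\max(\varphi,-j)$ is bounded but not continuous, so at fixed $j$ there is no known result to invoke.

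If you want to keep your variational inequality, run it with smooth $\varphi_l$ instead. Then $P(\varphi_l+f)$ and the test function $\varphi_l+f-P(\varphi_l+f)$ are continuous. The continuous case of \cite{BW08}, together with Proposition \ref{prop: I_k_conv}, then applies without circularity. This is, however, a roundabout way of recovering \cite[Theorem A]{BB10} for that weight.
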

In the above statement $P(\chi)$ stands for the envelope of the qpsh function $\chi$:
\begin{flalign}\label{eq: P_def}
P(\chi):= \sup\{\varphi \in \textup{PSH}_\omega, \ \ \varphi \leq \chi\}.
\end{flalign}
\begin{proof} Take $u_l \in \mathcal H_\omega$ with $u_l \searrow u$ \cite{BK07}. We have 
$P(\varphi+f) \leq \varphi+f \leq \varphi_l + f$, hence  due to monotonicity of $H^k$ and $I_k$, we have 
\begin{equation}\label{eq: d_p_ineq}
 H^k_{P(\varphi + f)} \geq H^k_{\varphi+f} \geq  H^k_{\varphi_l+f}
\end{equation}
(as Hermitian forms) and
\begin{equation}\label{eq: I_k_ineq}
 I_k(H^k_{P(\varphi + f)}) \leq I_k(H^k_{\varphi+f}) \leq  I_k(H^k_{\varphi_l+f}).
\end{equation}
Letting $k\to\infty$ in \eqref{eq: I_k_ineq} and using \cite[Theorem A]{BB10} together with Propostion \ref{prop: I_k_conv} 
we get
$$I(P(\varphi+f)) \leq \liminf_{k\to\infty} I_k(H^k_{\varphi+f}) 
\leq \limsup_{k\to\infty} I_k(H^k_{\varphi+f}) \leq I(P(\varphi_l + f)).$$
Letting $l \to \infty$,  since $I(P(\varphi_l + f)) \searrow I(P(\varphi+f))$, we obtain 
the result.
\end{proof}

The first step in proving Theorem \ref{thm: weak_conv_Radon} is the case
of finite energy functions:

\begin{thm}\label{thm: E^1_Berg_Meas_conv} Suppose that $\varphi \in \mathcal E^1_\omega$. 
Then $M^k_\varphi \to \omega_\varphi^n$ weakly.
\end{thm}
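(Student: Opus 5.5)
The plan follows the Berman--Witt Nyström variational argument. The key identity is that $M^k_\varphi$ is the derivative of the quantum energy along the Hilbert map. For $f\in C^\infty(X)$ and real $t$, the formula $\frac{d}{dt}\log\det H^k_{\varphi+tf} = -k\int_X f\, K^k_{\varphi+tf} e^{-k(\varphi+tf)}\omega^n$ gives
$$\frac{d}{dt} I_k(H^k_{\varphi+tf}) = \int_X f\, M^k_{\varphi+tf}.$$
Moreover $t\mapsto I_k(H^k_{\varphi+tf})$ is concave, because $t\mapsto -\log\det H^k_{\varphi+tf}$ is concave. Indeed, $\log\det$ of an integral of $e^{-ktf}$-weighted positive forms is convex in $t$, which one sees by differentiating twice, or via Berndtsson's convexity for the trivial fibration.

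Next, set $g_k(t):=I_k(H^k_{\varphi+tf})$ and $g(t):=I(P(\varphi+tf))$. Proposition \ref{prop: perturb_dp_conv}, applied to $tf$, gives $g_k(t)\to g(t)$ for every $t$. Concavity of each $g_k$ then forces $g$ to be concave. The standard convex-analysis fact then applies: if concave functions converge pointwise and the limit is differentiable at $0$, then the derivatives at $0$ converge, i.e.\ $g_k'(0)\to g'(0)$. Concretely, one squeezes $g_k'(0)$ between the difference quotients $(g_k(t)-g_k(0))/t$ and $(g_k(-t)-g_k(0))/(-t)$, lets $k\to\infty$, and then $t\to0$.

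It remains to know that $g'(0)=\int_X f\,\omega_\varphi^n$. This is the differentiability of $t\mapsto I(P(\varphi+tf))$ at $t=0$ for $\varphi\in\mathcal E^1_\omega$, with derivative $\int_X f\,\omega^n_{P(\varphi)}$. It follows from the Berman--Boucksom orthogonality/differentiability theorem, in its finite-energy extension (e.g.\ as in \cite{BFM}, or by monotone approximation $\varphi_l\searrow\varphi$ with $\varphi_l$ smooth together with continuity of $I$ along decreasing sequences). Since $P(\varphi)=\varphi$, this gives $\int_X f\,M^k_\varphi\to\int_X f\,\omega^n_\varphi$ for every smooth $f$. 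Density of $C^\infty(X)$ in $C^0(X)$, together with the uniform mass bound $\int_X M^k_\varphi=(2\pi)^nN_k/k^n\to V$, then yields weak convergence.

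The main obstacle is the differentiability of $I\circ P$ at an unbounded finite-energy $\varphi$, since the classical statement is for continuous or bounded potentials. I would first try to derive it directly from concavity. For a one-sided bound, $I(P(\varphi+tf))\le I(\varphi)+t\int f\omega_\varphi^n+o(t)$ follows from the cocycle inequality $I(\psi)-I(\varphi)\le\int(\psi-\varphi)\omega_\varphi^n$ applied to $\psi=P(\varphi+tf)\le\varphi+tf$. For the other side, apply the reverse inequality $I(\psi)-I(\varphi)\ge\int(\psi-\varphi)\omega_\psi^n$, and use that $\omega^n_{P(\varphi+tf)}$ is supported on the contact set $\{P(\varphi+tf)=\varphi+tf\}$ and converges weakly to $\omega_\varphi^n$ as $t\to0$ (continuity of Monge--Ampère along the $d_1$-convergent family $P(\varphi+tf)\to\varphi$). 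Together these give the two-sided bound $\int f\,\omega_\varphi^n+o(1)$ on the difference quotients.
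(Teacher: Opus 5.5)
Your proposal is correct and follows essentially the same route as the paper. Both arguments use concavity of $t\mapsto I_k(H^k_{\varphi+tf})$, the pointwise convergence $I_k(H^k_{\varphi+tf})\to I(P(\varphi+tf))$ from Proposition~\ref{prop: perturb_dp_conv}, the squeeze of $\int_X f\,M^k_\varphi$ between difference quotients, and the derivative formulas \eqref{eq: deriv_I_P}.

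The differences are only in supporting details. The paper cites \cite{BB10} and \cite[Proposition 4.32]{Da18} for differentiability of $t\mapsto I(P(\varphi+tf))$ at an unbounded $\varphi\in\mathcal E^1_\omega$. Your self-contained sketch is a valid substitute: it uses the two cocycle inequalities, the contact-set support of $\omega^n_{P(\varphi+tf)}$, and the bound $|P(\varphi+tf)-\varphi|\le |t|\sup_X|f|$. Separately, your aside invoking Berndtsson's positivity for concavity is misplaced. That theorem needs the total weight to be psh in $(x,\tau)$, and it then gives convexity of $-\log\det$ of the $L^2$ metric, which is the opposite direction. Your direct second-derivative (Cauchy--Schwarz) argument already suffices.
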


The argument follows the framework set up in \cite{BW08}. There it is argued that that the map 
$\mathbb R \ni t \to I_k(H^k_{\varphi+ t f}) \in \mathbb R$ is concave for any $\varphi \in \mathcal H_\omega$ 
and $f \in C^\infty(X)$. Since the potentials of $\mathcal E^1_\omega$ have zero Lelong numbers, and they can be approximated decreasingly by elements of $\mathcal H_\omega$, we obtain that $t \to I_k(H^k_{\varphi+ t f})$ is in fact concave for any 
$\varphi \in \mathcal E_\omega^1$. Using elementary arguments, one can prove that the map 
$t \to I(P(\varphi +tf))$ is also concave, since so is $I$ \cite{GZ07}.

Regarding the derivatives of these maps, an elementary calculation and \cite{BB10} (for a survey see \cite[Proposition 4.32]{Da18}) yields that
\begin{equation}\label{eq: deriv_I_P}
\frac{d}{dt}\Big|_{t=0} I_k(H^k_{\varphi+ t f})= \int_X f M^k_\varphi, \ \ \ \ 
\frac{d}{dt}\Big|_{t=0} I(P(\varphi+tf))= \int_X f \omega_\varphi^n
\end{equation}

\begin{proof}[Proof of Theorem \ref{thm: E^1_Berg_Meas_conv}] 
Let $f \in C^\infty(X)$.
By Proposition \ref{prop: perturb_dp_conv} $I_k(H^k_{\varphi+ t f}) \to I(P(\varphi+tf))$ for any $t \in \mathbb R$. 
We now claim that $\frac{d}{dt}\big|_{t=0} I_k(H^k_{\varphi+ t f}) \to  \frac{d}{dt}\big|_{t=0} I(P(\varphi+tf)).$ 
Using \eqref{eq: deriv_I_P} this will finish the proof.

Using concavity, for any $\varepsilon >0$ we have 
$$\frac{I_k(H^k_{\varphi+\ep f}) - I_k(H^k_\varphi) }{\varepsilon} 
\leq \frac{d}{dt}\Big|_{t=0} I_k(H^k_{\varphi+ t f})  
\leq \frac{I_k(H^k_{\varphi-\ep f}) - I_k(H^k_\varphi) }{-\varepsilon}.$$
By Proposition \ref{prop: perturb_dp_conv} 
$$\begin{aligned}\frac{I(P(\varphi+\varepsilon f)) - I(\varphi) }{\varepsilon}
&\leq \liminf_{k\to\infty} \frac{d}{dt}\Big|_{t=0} I_k(H^k_{\varphi+ t f})\\  
&\leq \limsup_{k\to\infty} \frac{d}{dt}\Big|_{t=0} I_k(H^k_{\varphi+ t f})  
\leq  \frac{I(P(\varphi-\varepsilon f)) - I(\varphi) }{-\varepsilon}.
\end{aligned}$$
Letting $\varepsilon \to 0$ and using \eqref{eq: deriv_I_P} finishes the proof. 
\end{proof}

\section{Quantization of Radon measures}

We start with a result that is a consequence of Lemma \ref{lem: Berg_kern_compare_compact}
and Corollary \ref{cor: opt_polydisk}. It can be viewed as the quantum analogue of the inequality 
$\omega_\varphi^n \geq a^n \omega^n$, valid when $\omega_\varphi \geq a\,\omega$.

\begin{prop}\label{prop: BD_est} Suppose that $\varphi \in \mathcal E_\omega$ is such that  
$\omega_\varphi \geq a \omega$ for some $a>0$. Then for some 
$\varepsilon_k = \varepsilon_k(a,\omega) \searrow 0$ we have  
$$M^k_\varphi\geq (1 -\varepsilon_k) a^n \omega^n.$$
\end{prop}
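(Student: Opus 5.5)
We argue pointwise. Fix $p\in X$ and work in local coordinates; the bound then follows from Lemma \ref{lem: Berg_kern_compare_compact} combined with Corollary \ref{cor: opt_polydisk}. By compactness of $X$ we cover $X$ by finitely many coordinate polydisks $P(p_i,3)$ on which $h=e^{-g_i}$. After shrinking and rescaling, we may choose these coordinates so that at each point $z\in P(p_i,1)$ the form $\omega$ is close to the Euclidean form. Precisely: for any $\delta>0$ we can pick, for every $z$, holomorphic coordinates centered at $z$ with $\omega(z)$ standard (i.e. $2\partial_j\partial_{\bar k}g(z)=\delta_{jk}$) and $|\partial\bar\partial g-\partial\bar\partial g(z)|\le\delta$ on a polydisk of radius $r$. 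Here $r=r(\delta)$ is uniform in $z$.

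\textbf{Step 1 (reduction to a local kernel).} The second inequality of \eqref{eq: K_est_compact} gives
\[
K^k_\varphi(z)\ge \Big(1+\tfrac{C}{(ka)^{1/2}}\Big)^{-1} e^{-kg(z)}\,K_{P(z,r),\,k\varphi+kg-\log\det(2\omega)}(z).
\]
The lemma is stated for radius $1$, but by rescaling it holds for radius $r$, with $C$ depending on $r$.

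\textbf{Step 2 (lower bound of the local kernel).} The weight is $u=k(\varphi+g)-\log\det(2\omega)$. Since $i\partial\bar\partial(g+\varphi)=\omega_\varphi\ge a\omega\ge a(1-\delta)\tfrac12(\delta_{jk})$ in these coordinates, and $\log\det(2\omega)$ has bounded Levi form, the function $u-ka'|\zeta|^2$ is psh with $a'=\tfrac{a}{2}(1-\delta)-O(1/k)$. We rescale $P(z,r)$ to the unit polydisk via $\zeta\mapsto r\zeta$ and apply Corollary \ref{cor: opt_polydisk} with parameter $ka'r^2$. The Jacobian factors cancel, and we get
\[
K_{P(z,r),u}(z)\ge \frac{(ka')^n}{\pi^n}e^{u(z)}=\frac{(ka')^n}{\pi^n}\,\frac{e^{k\varphi(z)+kg(z)}}{\det(2\omega)(z)}.
\]

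\textbf{Step 3 (assembling the bound).} We insert this into the definition $M^k_\varphi=\frac{(2\pi)^n}{k^n}K^k_\varphi e^{-k\varphi}\omega^n$. The normalisation needs checking: with $\omega=i\sum dz_j\wedge d\bar z_j\cdot\tfrac12$ at $z$ and $\det(2\omega)(z)=1$, we have $\omega^n=n!\,d\lambda$-type constants. These must match the factor $(2\pi)^n a'^n/\pi^n=(2a')^n\approx a^n(1-\delta)^n$. The convention in the paper, where $\omega^n$ is written against $\det(2\omega)d\lambda$ in Lemma \ref{lem: Berg_kern_compare_compact}, is designed so that the constants give exactly
\[
M^k_\varphi\ge (1-\delta)^n\Big(1-\tfrac{C_\delta}{k}\Big)^n\Big(1+\tfrac{C_\delta}{(ka)^{1/2}}\Big)^{-1}a^n\omega^n .
\]
Choosing $\delta=\delta_k\to0$ slowly, so that the $C_{\delta_k}$-terms still tend to $0$, yields $\varepsilon_k\searrow0$ depending only on $a$ and $\omega$. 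For small $k$, where $k<C/a$, we simply set $\varepsilon_k=1$.

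\textbf{Main obstacle.} The delicate point is Step 2. To get the sharp constant $a^n$ rather than $a^n/C$, we need the optimal constant $\pi(1-e^{-\alpha})/\alpha\le\pi/\alpha$ from Theorem \ref{ot}, and we need to freeze $\omega$ to be Euclidean at $z$ up to an error $\delta$. This is why we work on shrinking polydisks and let the constant of Lemma \ref{lem: Berg_kern_compare_compact} depend on the radius $r(\delta)$. The price is that this constant blows up as $\delta\to0$, but it is absorbed by taking $k$ large.
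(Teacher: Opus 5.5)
Your proposal is correct and follows essentially the same route as the paper. You make $\omega$ nearly Euclidean on small coordinate polydisks, pass from $K^k_\varphi$ to a local weighted kernel via Lemma~\ref{lem: Berg_kern_compare_compact}, apply the sharp bound of Corollary~\ref{cor: opt_polydisk}, and let the freezing error go to zero; the only difference is that you shrink radii $r(\delta)$ around each point and choose $\delta_k\to 0$ diagonally, whereas the paper rescales normal coordinates so $\omega$ is $\varepsilon$-close to $\beta\, i\partial\bar\partial|z|^2$ on $P(p,3)$ and then lets $\varepsilon$ be arbitrary.

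The normalization you left unchecked in Step 3 does work out. The paper's convention is $\omega^n=\det(2\omega)\,d\lambda$, as in the first identity in the proof of Lemma~\ref{lem: Berg_kern_compare_compact}. So at $z$ the density of $M^k_\varphi$ with respect to $\omega^n$ is at least $\big(1+C(ka)^{-1/2}\big)^{-1}\frac{(2\pi)^n}{k^n}\cdot\frac{(ka')^n}{\pi^n}=\big(1+C(ka)^{-1/2}\big)^{-1}(2a')^n$, and $2a'\to a$ as $\delta\to 0$ and $k\to\infty$.
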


\begin{proof} Let $p \in X$ and $\varepsilon> 0$. By rescaling the components of normal coordinates at $p$, 
there exists a coordinate polydisk $P(p,3)$ such that for some $\beta>0$ we have 
$\omega(p) = \left.\beta i\partial\bar\partial|z|^2\right|_p$ and
  $$(1-\varepsilon)\beta i\partial\bar\partial|z|^2
    \leq\omega
    \leq(1+\varepsilon)\beta i\partial\bar\partial|z|^2$$
in $P(p,3)$ (slightly abusing precision, we will think of the coordinate polydisk $P(p,3)$ 
as being a subset of both $\mathbb C^n$ and $X$). 
We can also assume that $\omega|_{P(p,3)} = i\partial \bar \partial g$ for some 
$g \in C^\infty(P(p,3))$. As a result, there exists $k_0$, depending on $\ep$, $a$, $\omega$ and our
coordinate polydisk $P(p,3)$, such that on $P(p,2)$ for $k\geq k_0$
  $$i \partial \bar \partial (g + \varphi  - \frac{1}{k}\log (\det(2\omega))
     \geq a(1-2\varepsilon)\beta i\partial\bar\partial|z|^2.$$
Now for any $q\in P(p,1)$ we use the estimate of  Corollary \ref{cor: opt_polydisk} on 
$P(q,1)\subset P(p,2)$ to obtain that:
$$K_{P(q,1),kg + k\varphi - \log ( \det (2\omega))} e^{-(kg + k\varphi - \log ( \det (2\omega)))} (q)
   \geq k^n \frac{a^n}{\pi^n} (1-2\varepsilon)^n \beta^n.$$ 
By Lemma \ref{lem: Berg_kern_compare_compact} there exists a constant $C$,
also depending on $\ep$, $a$, $\omega$ and $P(p,3)$, such that
\begin{flalign*}
\frac{(2\pi)^n}{k^n}K^k_\varphi e^{-k\varphi}  \det(\omega)(q) 
    &\geq \big(1+Ck^{-1/2}\big)^{-1}\,\frac{\pi^n}{k^n}
K_{P(q,1),kg + k\varphi - \log (\det(2\omega))} e^{-kg - k\varphi + \log ( \det(2\omega))}(q)\\
&\geq \big(1+Ck^{-1/2}\big)^{-1}\,(1-2\varepsilon)^n a^n \beta^n \\
&\geq  \big(1+Ck^{-1/2}\big)^{-1}\, a^n \frac{(1-2\varepsilon)^n}{(1+\varepsilon)^n}  \det(\omega)(q) , \ \ \ \ k \geq k_0.
\end{flalign*}
This implies that 
$$M^k_\varphi\geq \big(1+Ck^{-1/2}\big)^{-1}\,a^n \frac{(1-2\varepsilon)^n}{(1+\varepsilon)^n}  
\omega^n$$
on $P(p,1)$. By compactness this holds on $X$ for $k\geq k_0$ and $C$ depending on $a$, $\ep$ 
and $\omega$, hence 
  $$M^k_\varphi\geq\,a^n \frac{(1-2\varepsilon)^n}{(1+\varepsilon)^{n+1}}\omega^n,\ \ \ \ \ 
       k\geq k_0=k_0(a,\ep,\omega).$$
As $\varepsilon>0$ was arbitrary, the result follows.
\end{proof}

The next estimate is the quantum version of the inequality 
$\frac{1}{2^n} \omega_\varphi^n \leq \omega_{\varphi/2}^n,$ for $\varphi\in\mathcal E_\omega$ 
that follows from the multilinearity of the non-pluripolar complex Monge-Amp\`ere measure  \cite{GZ07}.
It can be deduced as a consequence of Finski's very general 
$L^{\infty}$ Ohsawa--Takegoshi theorem \cite[Theorem~1.10]{Fi21}. Here we 
give an alternative direct proof based on classical K\"ahler quantization:

\begin{prop}\label{prop: Fin_quant} Let  $\varphi \in \mathcal E_\omega$. Then for some 
$\varepsilon_k \searrow 0$ we have  
$$\frac{(1-\varepsilon_k)}{2^n} M^k_\varphi \leq M^{2k}_{\varphi/2}.$$ 
\end{prop}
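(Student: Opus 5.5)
The plan is to reduce the inequality to a pointwise comparison of Bergman kernels, and then to prove that comparison by multiplying sections. Unwinding \eqref{eq: M_k_def},
$$M^k_\varphi=\frac{(2\pi)^n}{k^n}K^k_\varphi e^{-k\varphi}\omega^n,\qquad M^{2k}_{\varphi/2}=\frac{(2\pi)^n}{(2k)^n}K^{2k}_{\varphi/2}e^{-2k\varphi/2}\omega^n=\frac{(2\pi)^n}{2^nk^n}K^{2k}_{\varphi/2}e^{-k\varphi}\omega^n .$$
The weights $e^{-k\varphi}$ coincide. Hence the statement is equivalent to the pointwise inequality
$$K^{2k}_{\varphi/2}(p)\geq(1-\varepsilon_k)K^k_\varphi(p),\qquad p\in X,$$
and this is what I will prove. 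Note also that the Hilbert norm $H^{2k}_{\varphi/2}$ on $H^0(X,L^{2k})$ is $\int_X h^{2k}(\cdot,\cdot)e^{-k\varphi}\omega^n$, i.e. it uses the same weight $e^{-k\varphi}$ as $H^k_\varphi$.

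Fix $p\in X$. By the extremal characterization \eqref{eq: K_k_def}, choose $s\in H^0(X,L^k)$ with $H^k_\varphi(s,s)\leq 1$ and $h^k(s,s)(p)=K^k_\varphi(p)$. This is possible because $\varphi\in\mathcal E_\omega$ has zero Lelong numbers, so $K^k_\varphi$ is well defined and the supremum is attained in the finite-dimensional space $H^0(X,L^k)$.

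Next take a ``peak section'' for the \emph{smooth} reference weight $\varphi\equiv 0$: choose $\sigma\in H^0(X,L^k)$ with $\int_X h^k(\sigma,\sigma)\omega^n=1$ and $h^k(\sigma,\sigma)(p)=K^k_0(p)$. The product $s\otimes\sigma\in H^0(X,L^{2k})$ then satisfies
$$H^{2k}_{\varphi/2}(s\sigma,s\sigma)=\int_X h^k(s,s)\,h^k(\sigma,\sigma)\,e^{-k\varphi}\omega^n\leq \sup_X h^k(\sigma,\sigma)\leq \sup_X K^k_0 .$$
In the last step I use $h^k(\sigma,\sigma)\leq K^k_0$ pointwise, which holds because $\sigma$ is admissible in the supremum defining $K^k_0$. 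Normalizing $s\sigma$ by this bound and evaluating at $p$ gives
$$K^{2k}_{\varphi/2}(p)\geq\frac{h^k(s,s)(p)\,h^k(\sigma,\sigma)(p)}{\sup_X K^k_0}=K^k_\varphi(p)\,\frac{K^k_0(p)}{\sup_X K^k_0}\geq K^k_\varphi(p)\,\frac{\inf_X K^k_0}{\sup_X K^k_0}.$$

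Finally, I invoke the classical Tian--Bouche--Catlin--Zelditch asymptotics for the smooth potential $0\in\mathcal H_\omega$. In our normalization these give $\frac{(2\pi)^n}{k^n}K^k_0\to 1$ uniformly on $X$, that is, $M^k_0\to\omega^n$ in $C^\infty$. Consequently $\delta_k:=1-\inf_XK^k_0/\sup_XK^k_0\to 0$, and $\delta_k$ depends only on $(X,\omega,L,h)$, not on $\varphi$. Setting $\varepsilon_k:=\sup_{j\geq k}\delta_j$ produces a nonincreasing sequence tending to $0$ that still dominates $\delta_k$, which gives the required monotone $\varepsilon_k\searrow 0$.

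I do not expect a genuine obstacle here. The one point that needs care is checking that the product section is measured in exactly the weight defining $K^{2k}_{\varphi/2}$, namely $h^{2k}e^{-2k(\varphi/2)}=h^{2k}e^{-k\varphi}$. This is precisely why the factor $1/2$ in the potential pairs with doubling the tensor power. The only external input beyond the extremal definitions is the uniform convergence of the smooth Bergman density, and for that the $C^0$ version of the classical theorem suffices.
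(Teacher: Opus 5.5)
Your proof is correct and follows essentially the same route as the paper: you tensor an extremal section for $K^k_\varphi$ at $p$ with a peak section of the unweighted kernel $K^k_0$ at $p$, note that the product is measured in the same weight $e^{-k\varphi}$, and control the loss via the uniform asymptotics $\frac{(2\pi)^n}{k^n}K^k_0\to 1$. The only difference is cosmetic: you express the loss as the ratio $\inf_X K^k_0/\sup_X K^k_0$, whereas the paper uses two-sided bounds with a constant $D_k\searrow 1$ and obtains the factor $1/D_k^2$.
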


\begin{proof} Recall that 
$$K^k_0 = \sup_{s \in H^0(X, L^k), \ \int_X h^k(s,s) \omega^n \leq 1}  h^k(s,s).$$
By the Catlin-Lu-Tian-Zelditch theorem, we known that $\frac{(2\pi)^n}{ k^n} K^k_0\rightrightarrows 1 $ on $X$. Hence there exists $D_k \searrow 1$ such that for every $p \in X$ there exists $s_p \in H^0(X, L^k)$ with 
\begin{equation}
\frac{1}{D_k} {k^n} \frac{1}{(2\pi)^n} \leq h^k(s_p,s_p)(p) \ \  \textup{ and }\ \ h^k(s_p,s_p)(x) \leq D_k k^n \frac{1}{(2\pi)^n}, \ x \in X.
\end{equation}

Let $\sigma \in H^0(X,L^k)$ with $\int_X h^k(\sigma, \sigma) e^{-k\varphi} \omega^n \leq 1$. Then,  $\chi_{\sigma, p} := \frac{(2\pi)^{n/2}}{D_k^{1/2} k^{n/2}}s_p \otimes \sigma \in H^0(X,L^{2k})$ satisfies
$$  \frac{1}{D_k^2}  h^{k} (\sigma, \sigma)(p) \leq h^{2k}(\chi_{\sigma, p},\chi_{\sigma, p})(p)  \ \ \textup{ and }\ \    \int_X h^{2k}(\chi_{\sigma, p},\chi_{\sigma, p}) e^{-k\varphi} \omega^n \leq 1.$$
By the definition of Bergman measures \eqref{eq: M_k_def} and the extremal definition of the Bergman kernel \eqref{eq: K_k_def}, the desired inequality $\frac{1}{2^n D_k^2} M^k_\varphi \leq M^{2k}_{\varphi/2}$ follows.
\end{proof}

The next technical result will allow to reduce the general case of Theorem \ref{thm: weak_conv_Radon} to the finite energy case already addressed in Theorem \ref{thm: E^1_Berg_Meas_conv}:

\begin{prop}Let $\varphi \in \mathcal E_\omega$. For any $\varepsilon >0$ there exists $c<0$ such that 
\begin{equation}\label{eq: B_k_est}
\limsup_{k \to \infty} \int_{\{\varphi \leq c \}} M^k_\varphi \leq   \varepsilon.
\end{equation}
\end{prop}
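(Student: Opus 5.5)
The statement says that $M^k_\varphi$ puts asymptotically little mass where $\varphi$ is very negative. The plan is to deduce this from its classical analogue $\lim_{c\to-\infty}\omega_\varphi^n(\{\varphi\le c\})=0$, which holds for $\varphi\in\mathcal E_\omega$ since the non-pluripolar measure $\omega_\varphi^n$ does not charge $\{\varphi=-\infty\}$. The tools are Berndtsson's quantized comparison principle (Theorem \ref{thm: Bernds_comp}), the halving inequality of Proposition \ref{prop: Fin_quant}, the quantum lower bound of Proposition \ref{prop: BD_est}, and the finite-energy convergence of Theorem \ref{thm: E^1_Berg_Meas_conv}. The plan has one unresolved step, flagged below.

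\emph{Step 1: reduce to a weight with $\omega_{\varphi/2}\ge\frac12\omega$.} Proposition \ref{prop: Fin_quant} gives
\[
\int_{\{\varphi\le c\}}M^k_\varphi\le 2^n(1+o(1))\int_{\{\varphi\le c\}}M^{2k}_{\varphi/2}.
\]
It therefore suffices to bound the mass of $M^{2k}_{\varphi/2}$ on $\{\varphi\le c\}$, uniformly in large $k$. The advantage is that $\omega_{\varphi/2}\ge\frac12\omega$, so Proposition \ref{prop: BD_est} gives the pointwise lower bound $M^{2k}_{\varphi/2}\ge(1-\varepsilon_k)2^{-n}\omega^n$ on all of $X$.

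\emph{Step 2: the complementary set via the comparison principle.} Since $\int_X M^{2k}_{\varphi/2}$ equals the normalized volume up to $o(1)$, it is enough to show that $\int_{\{\varphi>c\}}M^{2k}_{\varphi/2}$ is at least the total mass minus $\varepsilon$. To get this I would compare $\varphi/2$ with truncated potentials built from $\varphi_j:=\max(\varphi,-j)$, which are bounded and hence lie in $\mathcal E^1_\omega$. Theorem \ref{thm: E^1_Berg_Meas_conv} then gives $M^{2k}_{\varphi_j}\to\omega_{\varphi_j}^n$, and on $\{\varphi>-j\}$ plurifine locality gives $\omega_{\varphi_j}^n=\omega_\varphi^n$. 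By the classical statement, $\omega_\varphi^n(\{\varphi\le c\})\le\varepsilon$ for $c\ll0$. Theorem \ref{thm: Bernds_comp} applied to the pair $(\varphi/2,\psi)$, with $\psi$ a suitable convex combination of $\varphi/2$ and $\varphi_j$ plus constants, should then transfer the limit information from $M^{2k}_{\psi}$ to $M^{2k}_{\varphi/2}$ on a set containing $\{\varphi\le c\}$. The intended consequence is $\limsup_k\int_{\{\varphi\le c\}}M^{2k}_{\varphi/2}\lesssim\omega_{\psi}^n(\{\varphi\le c'\})+\varepsilon$.

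\emph{Main obstacle: the direction of the comparison principle.} Theorem \ref{thm: Bernds_comp} makes $M^k$ larger exactly where the potential is smaller, so a naive comparison of $\varphi$ with $\max(\varphi,c)$ only yields lower bounds on $\{\varphi\le c\}$, which is the wrong direction. I checked three concrete candidates, and each fails:
\begin{enumerate}[(a)]
\item $\psi=\varphi-c/2$ is admissible, with $\{\psi\le\varphi/2\}=\{\varphi\le c\}$, but $M^{2k}_{\varphi-c/2}=M^{2k}_\varphi$. The resulting inequality $\int_{\{\varphi\le c\}}M^k_\varphi\le2^n(1+o(1))\int_{\{\varphi\le c\}}M^{2k}_\varphi$ is circular and does not decay in $c$.
\item $\psi=\frac12\bigl(\varphi+\max(\varphi-c,c')\bigr)$ also has $\{\psi\le\varphi/2\}=\{\varphi\le c\}$. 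But on $\{\varphi\le c+c'\}$ it equals $\varphi/2$ plus a constant, so it is again circular there.
\item $\psi=\max(\varphi,2c)/2+c/2$ satisfies $\{\psi\le\varphi/2\}\supseteq\{\varphi\ge 2c\}$. That is the wrong set.
\end{enumerate}
The obstruction is structural. To use Theorem \ref{thm: Bernds_comp} in the upper-bound direction one needs $\psi\le\varphi/2$ on $\{\varphi\le c\}$, which forces $\psi$ to be unbounded wherever $\varphi$ is. One also needs $\psi$ to be of finite energy, so that its Bergman measures converge by Theorem \ref{thm: E^1_Berg_Meas_conv}, and a typical $\varphi\in\mathcal E_\omega$ admits no such $\psi$.

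What I would try first is to abandon the requirement $\psi\in\mathcal E^1_\omega$ and argue with lower bounds on the complement instead. Combining Proposition \ref{prop: BD_est} with comparisons against the family $\max(\varphi,-j)$, I would aim to show $\liminf_k\int_{\{\varphi>c\}}M^{2k}_{\varphi/2}\ge\int_{\{\varphi>c\}}\omega_{\varphi/2}^n-\varepsilon$, and then let $j\to\infty$ after $k\to\infty$. I expect this lower-bound estimate on the complement to be the crux of the proof, and I have not verified that this route closes.
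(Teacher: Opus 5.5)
Your Step 1 is exactly the paper's first move, but Step 2 is never closed, so the proposal does not prove the statement. The obstruction you identify is also not a real one. You do not need $\psi$ to have finite energy or convergent Bergman measures. It suffices that $\psi\in\mathcal E_\omega$ and $\omega_\psi\ge(1-\delta)\omega$ with $\delta$ small. Proposition \ref{prop: BD_est} then gives $M^{2k}_\psi\ge(1-\tilde\varepsilon_k)(1-\delta)^n\omega^n$ on $X$. Since $\int_X M^{2k}_\psi=(2\pi)^nN_{2k}/(2k)^n\to V:=\int_X\omega^n$, every Borel set $E$ satisfies $\limsup_k\int_E M^{2k}_\psi\le V-(1-\delta)^n\int_{X\setminus E}\omega^n$. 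This is small once $\delta$ and $\omega^n(E)$ are small. Your own bound $M^{2k}_{\varphi/2}\ge(1-\varepsilon_k)2^{-n}\omega^n$ is this mechanism with $\delta=1/2$, which is why it cannot give smallness by itself. The job of the comparison principle is to trade $\varphi/2$ for a potential with $\delta\to0$.

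The missing idea is the choice of that potential. Normalize $\varphi\le0$, fix $\alpha\gg1$, and take $\psi:=\frac1\alpha P(\alpha\varphi)$, with $P$ the envelope \eqref{eq: P_def}. Then $\psi\in\mathcal E_\omega$; this is where full mass enters, via the argument of \cite[Proposition 2.15]{DLR18}. Moreover $\omega_\psi\ge(1-\frac1\alpha)\omega$ and $\psi\le\varphi$. In effect this is your candidate (a) with $\varphi$ replaced by something lower that has almost full curvature. On $\{\varphi\le c\}$ one has $\psi\le\varphi\le\frac\varphi2+\frac c2$. Apply Theorem \ref{thm: Bernds_comp} to $\frac\varphi2+\frac c2$ and $\psi$, recalling $M^{2k}_{\varphi/2+c/2}=M^{2k}_{\varphi/2}$:
\[
\int_{\{\varphi\le c\}}M^{2k}_{\varphi/2}\le\int_{\{\psi\le\frac\varphi2+\frac c2\}}M^{2k}_{\varphi/2}\le\int_{\{\psi\le\frac\varphi2+\frac c2\}}M^{2k}_{\psi}\le\int_{\{\psi\le\frac c2\}}M^{2k}_{\psi}.
\]
Choose $c$ with $\omega^n(\{\psi>\frac c2\})\ge(1-\frac1\alpha)V$ and combine with Step 1. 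This yields $\limsup_k\int_{\{\varphi\le c\}}M^k_\varphi\le2^n\bigl(1-(1-\frac1\alpha)^{n+1}\bigr)V$, which is at most $\varepsilon$ for $\alpha$ large.

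So $\psi$ is unbounded, as you predicted, and it has infinite energy whenever $\varphi$ does, but neither matters. Theorem \ref{thm: E^1_Berg_Meas_conv} is not needed here at all. Your fallback, a lower bound on $\liminf_k\int_{\{\varphi>c\}}M^{2k}_{\varphi/2}$ via $\max(\varphi,-j)$, would amount to proving half of the weak convergence this proposition is meant to feed into. As you noted, the comparison with $\max(\varphi,-j)$ also runs in the wrong direction.
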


\begin{proof} We can assume that $\varphi \leq 0$. Fix $\alpha > 1$.
Recall that $P(\alpha \varphi) \in \mathcal E_\omega$ as follows from the argument of 
\cite[Proposition 2.15]{DLR18}. Specifically, this last result argues that $P(\alpha \varphi) \in \mathcal E^p_\omega$ 
if $\varphi \in \mathcal E^p_\omega$, but the same elementary proof applies for any class 
$\mathcal E_\omega^\chi$, with such classes exhuasting $\mathcal E_\omega$ (\cite[Proposition 2.2]{GZ07}).
We have 
$$\{\varphi \leq c\} \subset \Big\{\frac{1}{\alpha} P(\alpha \varphi) \leq \frac\varphi{2} + \frac{c}{2}\Big\}.$$
This follows from the fact that $\frac{1}{\alpha} P(\alpha \varphi) \leq \varphi \leq \varphi/2 + c/2$ on the set $\{\varphi \leq c\}$. As a result, we can use Proposition \ref{prop: Fin_quant} and Theorem \ref{thm: Bernds_comp} to obtain
\begin{flalign}\label{eq: M_k_half}
\frac{1-\ep_k}{2^n}\int_{\{\varphi \leq c \}} M^{k}_\varphi & 
\leq \int_{\{\varphi \leq c \}} M^{2k}_{\varphi/2} 
\leq \int_{\{\frac{1}{\alpha} P(\alpha \varphi) \leq \frac{\varphi}{2} + \frac{c}{2}\}}M^{2k}_{\varphi/2} \\
\nonumber &\leq \int_{\{\frac{1}{\alpha} P(\alpha \varphi) 
\leq \frac{\varphi}{2} + \frac{c}{2}\}} M^{2k}_{\frac{1}{\alpha}P(\alpha \varphi)}
\leq \int_{\{\frac{1}{\alpha} P(\alpha \varphi) \leq \frac{c}{2}\}} M^{2k}_{\frac{1}{\alpha}P(\alpha \varphi)}.
\end{flalign}
Trivially, we have $\omega_{\frac{1}{\alpha}P(\alpha \varphi)} \geq \big(1 - \frac{1}{\alpha}\big) \omega$. Proposition \ref{prop: BD_est} now implies that
$$M^{2k}_{\frac{1}{\alpha}P(\alpha \varphi)} \geq (1 - \wt\varepsilon_k) \Big(1 - \frac{1}{\alpha}\Big)^n\omega^n.$$
Using this, we can continue the estimates of \eqref{eq: M_k_half}:
\begin{flalign*}
\frac{1-\ep_k}{2^n}\int_{\{\varphi \leq c \}} M^{k}_\varphi 
&\leq \int_X M^{2k}_{\frac{1}{\alpha}P(\alpha \varphi)}- 
   \int_{\{\frac{1}{\alpha} P(\alpha \varphi) > \frac{c}{2}\}}M^{2k}_{\frac{1}{\alpha}P(\alpha \varphi)}\\
& \leq \frac{(2\pi)^nN_k}{k^n} - (1-\wt\varepsilon_k) \Big(1-\frac{1}{\alpha}\Big)^n\int_{\{\frac{1}{\alpha} P(\alpha \varphi) > \frac{c}{2}\}} \omega^n.
\end{flalign*}
There exists $c = c(\varphi,\alpha)<0$ such that  $\int_{\{\frac{1}{\alpha} P(\alpha \varphi) > \frac{c}{2}\}} \omega^n \geq (1 - \frac{1}{\alpha}) \int_X \omega$. Since $\frac{(2\pi)^nN_k}{k^n} \to \int_X \omega^n$, we obtain  
\begin{equation}
\limsup_{k \to \infty} \int_{\{\varphi \leq c \}} M^k_\varphi \leq   2^n\bigg(1 - \Big(1 - \frac{1}{\alpha}\Big)^{n+1} \bigg) \int_X \omega^n.
\end{equation}
We can now choose $\alpha>1$ so that the right-hand side does not exceed $\ep$.
\end{proof}

We can now finish the proof of Theorem \ref{thm: weak_conv_Radon}:

\begin{thm} 
Let $\varphi \in \mathcal E_\omega$. Then
$M^k_\varphi  \to  \omega_\varphi^n$ weakly, as $k \to \infty$.
\end{thm}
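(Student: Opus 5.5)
The plan is to reduce to the finite energy case (Theorem \ref{thm: E^1_Berg_Meas_conv}) by truncating the potential from below, and to control the error with the tail estimate \eqref{eq: B_k_est} together with Berndtsson's comparison principle (Theorem \ref{thm: Bernds_comp}). We may normalize $\varphi\leq 0$. Fix a continuous test function $f\geq 0$ on $X$; since every continuous function is a difference of two such functions, it suffices to show $\int_X fM^k_\varphi\to\int_X f\omega_\varphi^n$. The masses are uniformly bounded, because $\int_X M^k_\varphi=(2\pi)^nN_k/k^n\to V$. Hence every subsequence of $M^k_\varphi$ has a weakly convergent subsequence, and it is enough to identify every weak limit $\mu$ as $\omega_\varphi^n$. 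Since $\mu(X)=V=\int_X\omega_\varphi^n$ (by full mass), it further suffices to prove the one-sided inequality $\mu\geq\omega_\varphi^n$.

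To get that inequality, for $c<0$ consider the truncation $\varphi_c:=\max(\varphi,c)\in\textup{PSH}_\omega\cap L^\infty\subset\mathcal E^1_\omega$. By Theorem \ref{thm: E^1_Berg_Meas_conv}, $M^k_{\varphi_c}\rightharpoonup\omega_{\varphi_c}^n$. On the open set $U_c:=\{\varphi>c\}$ (open in the plurifine sense; I will handle openness through quasi-continuity), the two potentials coincide, so $\varphi\le\varphi_c$ gives $H^k_\varphi\geq H^k_{\varphi_c}$, hence $K^k_\varphi\leq K^k_{\varphi_c}$. On $\{\varphi\geq c\}$ this yields $M^k_\varphi\le M^k_{\varphi_c}$, which is an inequality in the wrong direction for a lower bound. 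So I will instead apply Theorem \ref{thm: Bernds_comp} to the pair $\varphi_c,\varphi$: integrating over $\{\varphi_c\leq\varphi\}=\{\varphi\geq c\}$ gives $\int_{\{\varphi\geq c\}}M^k_{\varphi_c}\leq\int_{\{\varphi\ge c\}}M^k_\varphi$. Combined with the pointwise inequality $M^k_\varphi\le M^k_{\varphi_c}$ there, the difference $(M^k_{\varphi_c}-M^k_\varphi)\mathbbm 1_{\{\varphi\ge c\}}$ is a nonnegative measure with nonpositive total mass, so it vanishes. Thus $\mathbbm 1_{\{\varphi\geq c\}}M^k_\varphi=\mathbbm 1_{\{\varphi\ge c\}}M^k_{\varphi_c}$ exactly. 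Consequently
$$\Big|\int_X fM^k_\varphi-\int_X fM^k_{\varphi_c}\Big|\leq\sup|f|\Big(\int_{\{\varphi<c\}}M^k_\varphi+\int_{\{\varphi<c\}}M^k_{\varphi_c}\Big).$$
Both tail terms equal total mass minus the integral over $\{\varphi\geq c\}$, and those integrals coincide, so the two tails are equal. By \eqref{eq: B_k_est} (applied with $c$ slightly adjusted), their $\limsup$ is at most $\varepsilon$ for $c$ sufficiently negative. Letting $k\to\infty$ gives $|\int f\,d\mu-\int f\omega_{\varphi_c}^n|\leq 2\varepsilon\sup|f|$.

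Finally, let $c\to-\infty$. By the plurifine locality of the non-pluripolar Monge--Amp\`ere operator, $\omega_{\varphi_c}^n=\omega_\varphi^n$ on $\{\varphi>c\}$, and $\omega_{\varphi_c}^n(\{\varphi\le c\})=V-\omega_\varphi^n(\{\varphi>c\})\to 0$ since $\varphi\in\mathcal E_\omega$ and $\{\varphi=-\infty\}$ is pluripolar. Hence $\int f\omega_{\varphi_c}^n\to\int f\omega_\varphi^n$, and $\varepsilon$ is arbitrary, so $\mu=\omega_\varphi^n$. This also gives Theorem \ref{thm: weak_conv_Radon} with $\varphi=\varphi_\nu$.

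The main obstacle I expect is the level-set bookkeeping. The sets $\{\varphi\ge c\}$ versus $\{\varphi>c\}$ matter in Berndtsson's inequality and in \eqref{eq: B_k_est}. I will avoid boundary-mass issues by choosing $c$ such that $\omega_\varphi^n(\{\varphi=c\})=0$, which holds for all but countably many $c$, and by using $\{\varphi\le c\}\supset\{\varphi<c\}$ when invoking \eqref{eq: B_k_est}. The exact identity derived from Theorem \ref{thm: Bernds_comp} is what makes the argument clean, so I will check its hypotheses carefully (both potentials lie in $\mathcal E_\omega$).
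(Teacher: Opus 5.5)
\textbf{The gap is in your use of Theorem \ref{thm: Bernds_comp}.} To integrate over $\{\varphi_c\le\varphi\}=\{\varphi\ge c\}$ you must take $\psi=\varphi_c$ in that theorem. It then gives $\int_{\{\varphi\ge c\}}M^k_\varphi\le\int_{\{\varphi\ge c\}}M^k_{\varphi_c}$. This is the reverse of what you wrote, and it is only the integrated form of your pointwise bound (the paper's \eqref{eq: important_est}). The other assignment of roles integrates over $\{\varphi\le\varphi_c\}=X$ and says nothing.

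So the ``exact identity'' $\mathbbm 1_{\{\varphi\ge c\}}M^k_\varphi=\mathbbm 1_{\{\varphi\ge c\}}M^k_{\varphi_c}$ is unsupported, and so is the equality of the two tails. The identity is in fact false in general. It forces $K^k_\varphi=K^k_{\varphi_c}$ on $\{\varphi\ge c\}$. If that set has interior, polarization and the identity theorem give equality of the off-diagonal reproducing kernels, hence $H^k_\varphi=H^k_{\varphi_c}$. That is impossible once $\{\varphi<c\}$ has positive volume, because then $H^k_\varphi(s,s)>H^k_{\varphi_c}(s,s)$ for every $s\neq 0$.

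\textbf{The repair needs nothing new.} You set aside the pointwise inequality $\mathbbm 1_{\{\varphi\ge c\}}M^k_\varphi\le\mathbbm 1_{\{\varphi\ge c\}}M^k_{\varphi_c}$ as ``the wrong direction.'' But you had already observed $\mu(X)=\int_X\omega_\varphi^n$, so an upper bound on $\mu$ is as good as a lower bound. This is exactly the paper's proof:
\begin{itemize}
\item $M^k_\varphi\le\mathbbm 1_{\{\varphi\le c\}}M^k_\varphi+M^k_{\max(\varphi,c)}$;
\item hence any weak limit satisfies $\mu\le\mu_c+\omega^n_{\max(\varphi,c)}$, with $\mu_c(X)\le\varepsilon$ by \eqref{eq: B_k_est};
\item letting $c\to-\infty$ gives $\mu\le\omega_\varphi^n$, and equal total masses give equality.
\end{itemize}

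Alternatively, your own two-sided estimate survives without the identity. Set $T=\int_{\{\varphi<c\}}M^k_\varphi$ and $T'=\int_{\{\varphi<c\}}M^k_{\varphi_c}$. Equality of total masses shows that the nonnegative measure $\mathbbm 1_{\{\varphi\ge c\}}(M^k_{\varphi_c}-M^k_\varphi)$ has mass $T-T'$. Therefore
\[
\Big|\int_X fM^k_\varphi-\int_X fM^k_{\varphi_c}\Big|\le\sup|f|\,\big((T-T')+T+T'\big)=2\sup|f|\,T,
\]
which is precisely the bound you wanted. The rest of your argument then goes through, including the limit $\omega^n_{\varphi_c}\to\omega^n_\varphi$ via plurifine locality and full mass.
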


\begin{proof}
We can assume that $\varphi \leq 0$. Let $\varepsilon_m := 1/m $ and $c_m\to -\infty$ be given by the previous proposition. 
Then using \eqref{eq: important_est} we can write
\begin{flalign*}
M^k_\varphi&= \mathbbm{1}_{\{\varphi \leq c_m\}}M^k_\varphi+\mathbbm{1}_{\{\varphi > c_m\}}M^k_\varphi\\
& \leq \mathbbm{1}_{\{\varphi \leq c_m\}}M^k_\varphi+\mathbbm{1}_{\{\varphi> c_m\}}M^k_{\max(\varphi,c_m)}\\
&\leq \mathbbm{1}_{\{\varphi \leq c_m\}}M^k_\varphi +M^k_{\max(\varphi,c_m)}.
\end{flalign*}
As a result, by Theorem \ref{thm: E^1_Berg_Meas_conv} and  \eqref{eq: B_k_est}, we obtain that any weak limit $\mu$ of 
$M^k_\varphi$ is dominated by $\mu_{c_m} + \omega^n_{\max(\varphi,c_m)}$, where the total mass of $\mu_{c_m}$ on $X$ 
is less than $\varepsilon_m = 1/m$. Letting $m \to \infty$, since $\omega^n_{\max(\varphi,c_m)} \to \omega_\varphi^n$ weakly \cite{GZ07}, 
we conclude that $\mu \leq \omega_\varphi^n$. Due to equality of total masses, we must have $\mu = \omega_\varphi^n$, 
as desired.
\end{proof}

\vspace{0.3in}
\noindent {\sc Jagiellonian University,
Institute of Mathematics, Krak\'ow, Poland}\\
{\tt zbigniew.blocki@uj.edu.pl }\vspace{0.1in}

\noindent {\sc University of Maryland, Deparment of Mathematics, College Park, USA}\\
{\tt tdarvas@umd.edu}\vspace{0.1in}\\


\begin{thebibliography}{99}

\def\wtt{$\widetilde{\phantom{a}}$}

\bibitem{Berm09} R. Berman, {\sl Bergman kernels and equilibrium measures for line bundles over projective manifolds}, Amer. J. Math. 131 (2009), 1485--1524

\bibitem{BB} R. Berman, B. Berndtsson, {\sl Convexity of the $K$-energy on the space of K\"ahler metrics and uniqueness of extremal metrics}, J. Amer. Math. Soc. 30 (2017), 1165--1196

\bibitem{BBS08} R. Berman, B. Berndtsson, J. Sj\"ostrand, {\sl A direct approach to Bergman kernel asymptotics for positive line bundles}, Ark. Mat. 46 (2008), 197--217

\bibitem{BB10} R. Berman, S. Boucksom, {\sl Growth of balls of holomorphic sections and energy at equilibrium}, Invent. Math. 181 (2010), 337--394

\bibitem{BFM} R. Berman, G. Freixas i Montplet, {\sl An arithmetic Hilbert-Samuel theorem for singular hermitian line bundles and cusp forms}, Compos. Math. 150 (2014), 1703--1728

\bibitem{BW08} R. Berman, D. Witt Nystr\"om, {\sl Convergence of Bergman measures for high powers of a line bundle},  arXiv:0805.2846

\bibitem{BBWN11} R. J. Berman, S. Boucksom, and D. Witt Nystr\"om. Fekete points and convergence towards equilibrium measures on complex manifolds. Acta Mathematica 207.1 (2011), pp. 1-27.

\bibitem{Br18} B. Berndtsson, Probability measures associated to geodesics in the space of Kähler metrics. Algebraic and analytic microlocal analysis, 395--419, Springer Proc. Math. Stat., 269, Springer, Cham, 2018.


\bibitem{Bern15} B. Berndtsson, {\sl A comparison principle for Bergman kernels}, in {\sl Analysis Meets Geometry: A Tribute to Mikael Passare}, eds. M. Andersson, J. Boman, C. Kiselman, P. Kurasov, R. Sigurdsson, Trends in Mathematics, pp. 121--126, Springer, 2017 

\bibitem{B} Z. B\l ocki, {\sl Suita conjecture and the Ohsawa--Takegoshi extension theorem}, Invent. Math. 193 (2013), 149--158

\bibitem{Bl} Z. B\l ocki, {\sl Selected Topics in Several Complex Variables}, course given at the University of Maryland, Spring 2025,
available at 
{\tt http://gamma.im.uj.edu.pl/\wtt blocki}

\bibitem{BK07} Z. B\l ocki, S.Ko\l odziej, {\sl On regularization of plurisubharmonic functions on manifolds}, Proc. Amer. 
Math. Soc. 135 (2007), 2089--2093

\bibitem{Bouche} T. Bouche, {\sl Convergence de la m\'etrique de Fubini--Study d'un fibr\'e lin\'eaire positif}, Ann. Inst. Fourier 40 (1990), 117--130

\bibitem{Catlin} D. Catlin, {\sl The Bergman kernel and a theorem of Tian}. In {\sl Analysis and Geometry in Several Complex Variables}, 
Proc. 40th Taniguchi Symposium, eds. G. Komatsu, M. Kuranishi, pp. 1--23, Birkh\"auser, 1999

\bibitem{CS12} X.X. Chen, S. Sun, Space of Kähler metrics (V)—Kähler quantization. Metric and differential geometry, 19--41, Progr. Math., 297, Birkhäuser/Springer, Basel, 2012.

\bibitem{CMM17} D. Coman, X. Ma, G. Marinescu, Equidistribution for sequences of line bundles on normal Kähler spaces. Geom. Topol. 21 (2017), no. 2, 923--962.

\bibitem{Da15} T. Darvas, {\sl The Mabuchi geometry of finite energy classes}, Adv. Math. 285 (2015), 182--219

\bibitem{Da18} T. Darvas, {\sl Geometric pluripotential theory on Kähler manifolds}, in {\sl Advances in complex geometry}, eds. 
Y.A. Rubinstein, B. Shiffman, Contemp. Math. 735, pp. 1--104, Amer. Math. Soc., 2019

\bibitem{DLR18} T. Darvas, C.H. Lu, Y.A. Rubinstein, {\sl Quantization in geometric pluripotential theory},  
Comm. Pure Appl. Math. 73 (2020), 1100--1138

\bibitem{DX24} T. Darvas, M. Xia, The volume of pseudoeffective line bundles and partial equilibrium, Geom. and Topol. 28 (2024), no. 4, 1957-1993. 


\bibitem{D4} J.-P. Demailly, {\sl Regularization of closed positive currents and intersection theory}, J. Algebraic Geom. 
1 (1992), 361--409

\bibitem{DMN} T.-C. Dinh, X. Ma, V.-A. Nguyên,  {\sl On the asymptotic behavior of Bergman kernels for positive line 
bundles}, Pacific J. Math. 289 (2017), 71--89

\bibitem{DemBook09} J.-P. Demailly, {\sl Complex Analytic and Differential Geometry}, available at \newline
{\tt https://www-fourier.univ-grenoble-alpes.fr//\wtt demailly/}

\bibitem{DemBook12}  J.-P. Demailly, {\sl Analytic methods in algebraic geometry},
Higher Education Press, Surveys of Modern Mathematics, Vol. 1, 2010

\bibitem{Di09} S. Dinew, {\sl Uniqueness in $\mathcal E(X,\omega)$}, J. Funct. Anal. 256 (2009), 2113--2122

\bibitem{Do01} S. K. Donaldson, {\sl Scalar curvature and projective embeddings}, I, J. Diff. Geom. 59 (2001), 479--522

\bibitem{Do05} S. K. Donaldson, {\sl Scalar curvature and projective embeddings, II}, Quart. J. Math. 56 (2005), 345--356

\bibitem{En} M. Engliš, {\sl Weighted Bergman kernels and 
quantization}, Comm. Math. Phys. 227 (2002), 211--241

\bibitem{Fi21} S. Finski, {\sl Semiclassical Ohsawa-Takegoshi extension theorem and asymptotics of the orthogonal Bergman kernel}, J. Differential Geom. 128 (2024), no. 2, 639--721.
\bibitem{Fin26} S. Finski, 
{\sl Bernstein-Markov measures and Toeplitz theory},
arXiv:2506.01610.
\bibitem{GZ} Q. Guan, X.Y Zhou, {\sl Strong openness of multiplier ideal sheaves and optimal
$L^2$ extension}, Sci. China Math. 60 (2017), 967--976

\bibitem{GZ07} V. Guedj, A. Zeriahi, {\sl The weighted Monge-Amp`ere energy of quasiplurisubharmonic
functions}, J. Funct. Anal. 250 (2007), 442--482

\bibitem{GT} D. Gilbarg, N. Trudinger, {\sl Elliptic Partial Differential Equations of Second Order}, Classics in Math. 
Springer, 2001

\bibitem{HTW} Y. He, J. Testorf, X. Wang, {\sl Ross-Witt Nyström correspondence and Ohsawa-Takegoshi 
extension}, arXiv:2311.03840v5


\bibitem{Lu} Z. Lu, {\sl On the lower order terms of the asymptotic expansion of Tian--Yau--Zelditch}, 
Amer. J. Math. 122 (2000), 235--273.

\bibitem{MaMarinescu} X. Ma, G. Marinescu, {\sl Holomorphic Morse Inequalities and Bergman Kernels}, Progress in Math. 
254, Birkh\"auser, 2007

\bibitem{NW} T.T.H. Nguyen, X. Wang, {\sl On a remark by Ohsawa related to the Berndtsson-Lempert method for
$L^2$-holomorphic extension}, Ark. Mat. 60 (2022), 173--182

\bibitem{O} T. Ohsawa, {\sl On the extension of $L^2$ holomorphic functions VIII - a remark on a
theorem of Guan and Zhou}, Internat. J. Math. 28 (2017), art. no 1740005, 12 pp.

\bibitem{OT} T. Ohsawa, K. Takegoshi, {\sl On the extension of $L^2$ holomorphic functions}, Math. Z. 195 (1987), 197--204

\bibitem{SZ10} J. Song, S. Zelditch, Bergman metrics and geodesics in the space of Kähler metrics on toric varieties. Anal. PDE 3 (2010), no. 3, 295--358.

\bibitem{PS06} D.-H. Phong, J. Sturm, The Monge-Ampere operator and geodesics in the space of K\"ahler potentials, Invent. Math. 166 (2006), no. 1, 125–-149.

\bibitem{Ruan} W.-D. Ruan, {\sl Canonical coordinates and Bergman metrics}, Comm. Anal. Geom. 6 (1998), 589--631

\bibitem{RZ10} Y.A. Rubinstein, S. Zelditch, Bergman approximations of harmonic maps into the space of K\"ahler metrics on toric varieties.
J. Symplectic Geom. 8 (2010), no. 3, 239–265.

\bibitem{Skoda} H. Skoda, {\sl Sous-ensembles analytiques d’ordre fini ou infini dans $\mathbb C^n$}, 
Bull. Soc. Math. France 100 (1972), 353--408

\bibitem{SzeBook} G Székelyhidi, {\sl An introduction to extremal Kähler metrics}. Graduate Studies in Math. 152, 
Amer. Math. Soc., 2014 

\bibitem{Tian} G. Tian, {\sl On a set of polarized K\"ahler metrics on algebraic manifolds}, J. Diff. Geom. 32 (1990), 99--130

\bibitem{Xi24} M. Xia, {\sl Non-pluripolar products on vector bundles and Chern-Weil formulae}, 
Math. Ann. 390 (2024), 3239--3316

\bibitem{Yau} S.-T. Yau, Nonlinear analysis in geometry, Enseign. Math. 33 (1987), 109--158

\bibitem{Zelditch} S. Zelditch, {\sl Szeg\H{o} kernels and a theorem of Tian}, 
Internat. Math. Res. Notices 6 (1998), 317--331


\end{thebibliography}
\end{document}